\renewcommand{\subset}{\subseteq}
\newcommand{\lcb}{\left\lbrace} 
\newcommand{\rcb}{\right\rbrace} 
\newcommand{\cb}[1]{\lcb #1 \rcb} 
\newcommand{\cbOf}[1]{\mathopen{}\lcb #1 \rcb\mathclose{}} 
\newcommand{\lb}{\left(} 
\newcommand{\rb}{\right)} 
\newcommand{\br}[1]{\lb #1 \rb} 
\newcommand{\brOf}[1]{\!\br{#1}} 
\newcommand{\abs}[1]{\left| #1 \right|} 
\newcommand*{\E}{\mathbb{E}} 
\let\Pr\relax
\newcommand*{\Pr}{\mathbb{P}} 
\newcommand{\sizedMid}[2]{#1 \, \kern-\nulldelimiterspace\mathopen{}\left| \vphantom{#1}\,#2\right.\mathclose{}\kern-\nulldelimiterspace}
\newcommand{\PrOf}[1]{\Pr\mathopen{}\lb #1 \rb\mathclose{}}
\newcommand{\Prof}[1]{\Pr(#1)}
\DeclareMathOperator{\diam}{\mathsf{diam}}
\DeclareMathOperator{\ball}{\mathrm{B}}
\DeclarePairedDelimiterX\Set[1]{\lbrace}{\rbrace}%
{  #1 }
\newcommand{\Ex}{\E\expectarg}
\DeclarePairedDelimiterX{\expectarg}[1]{[}{]}{%
	\ifnum\currentgrouptype=16 \else\begingroup\fi
	\activatebar#1
	\ifnum\currentgrouptype=16 \else\endgroup\fi
}
\newcommand{\innermid}{\nonscript\;\delimsize\vert\nonscript\;}
\newcommand{\activatebar}{%
	\begingroup\lccode`\~=`\|
	\lowercase{\endgroup\let~}\innermid 
	\mathcode`|=\string"8000
}
\newcommand*{\mc}[1]{\mathcal{#1}}
\newcommand*{\mb}[1]{\mathbb{#1}}
\newcommand*{\ms}[1]{\mathsf{#1}}
\newcommand*{\mo}[1]{\mathbf{#1}}
\newcommand*{\mf}[1]{\mathfrak{#1}}
\newcommand{\N}{\mathbb{N}}
\newcommand{\R}{\mathbb{R}}
\newcommand{\Z}{\mathbb{Z}}
\newcommand{\transpose}{\!^\top\!}
\newcommand{\tr}{\transpose}
\newcommand{\pr}{^\prime}
\newcommand{\Exp}{\mathsf{Exp}}
\newcommand{\dl}{\mathrm{d}}
\newcommand{\ind}{\mathds{1}}
\newcommand{\norm}{|\cdot|}
\newcommand{\normof}[1]{|#1|}
\newcommand{\normOf}[1]{\left|#1\right|}
\newcommand{\equationFullstop}{\, .}
\newcommand{\eqfs}{\equationFullstop}
\newcommand{\equationComma}{\, ,}
\newcommand{\eqcm}{\equationComma}
\DeclareMathOperator*{\argmin}{arg\,min}
\newcommand{\ol}[2]{\overline{#1,\!#2}}
\newcommand{\old}[2]{d\brOf{#1,#2}}
\theoremstyle{plain}
\newtheorem{theorem}{Theorem}
\newtheorem{lemma}{Lemma}
\newtheorem{corollary}{Corollary}
\newtheorem{proposition}{Proposition}
\newtheorem{definition}{Definition}
\theoremstyle{remark}
\newtheorem{remark}{Remark}
\newtheorem{assumptions}{Assumptions}
\begin{document}
\title{Nonparametric Regression in Nonstandard Spaces}
\author{Christof Schötz}
\maketitle
\begin{abstract}
A nonparametric regression setting is considered with a real-valued covariate and responses from a metric space.
One may approach this setting via Fréchet regression, where the value of the regression function at each point is estimated via a Fréchet mean calculated from an estimated objective function. A second approach is geodesic regression, which builds upon fitting geodesics to observations by a least squares method. These approaches are applied to transform two of the most important nonparametric regression estimators in statistics to the metric setting -- the local linear regression estimator and the orthogonal series projection estimator.
The resulting procedures consist of known estimators as well as new methods. We investigate their rates of convergence in a general setting and compare their performance in a simulation study on the sphere. 
\end{abstract}
\tableofcontents
\section{Introduction}
Our goal is to estimate an unknown function $[0,1]\to\mc Q, t\mapsto m_t$, which is not of a simple parametric form, where $(\mc Q, d)$ is a general metric space. To this end, we have access to independent data $(x_i, y_i)_{i=1,\dots,n}$. We assume that the covariates are fixed as $x_i = \frac in$, and $y_i$ is a random variable with values in $\mc Q$ such that its \textit{Fréchet mean} is equal to $m_{x_i}$, i.e., $m_{x_i} = \argmin_{q\in\mc Q}\Ex{d(y_i, q)^2}$. We consider $\mc Q$ to be nonstandard, i.e., a metric space that is not isometric to a convex subset of a separable Hilbert space. Examples of nonstandard spaces are Riemannian manifolds, like the hypersphere $\mb S^k$, Hadamard spaces, like the space of phylogenetic trees \cite{billera01}, or Wasserstein spaces \cite{agueh11} in dimension greater than one. 

The literature on statistical analysis in nonstandard spaces is vast. We refer the reader to \cite{huckemann20a} for an overview and only present a small glimpse here. The \textit{Fréchet mean} \cite{frechet48} or \textit{barycenter} $m \in \argmin_{q\in\mc Q}\Ex{d(Y, q)^2}$ of a random variable $Y$ with values in the metric space $\mc Q$ lies at the heart of most analysis in nonstandard spaces. It can be viewed as a generalization of the Euclidean mean as $\Ex{X} = \argmin_{q\in\R^k}\Ex{\abs{X - q}^2}$ for a $\R^k$-valued random variable $X$ with $\Ex{\abs{X}^2} < \infty$. In Alexandrov spaces, the sample Fréchet mean is shown to attain the parametric rate of convergence under certain conditions \cite{gouic19}. In Hadamard spaces, the theory of Fréchet means \cite{sturm03} and algorithms for their calculation \cite{bacak14} are well described.
The Fréchet mean has been studied on Riemannian manifolds, e.g., \cite{bhattacharya03}. In this setting, \cite{eltzner19} (among others) show a central limit theorem. Nonparametric regression with metric target values is developed, e.g., in \cite{davis10, hein09, petersen16}. \cite{lin19} present a regression technique with regularization by total variation. \cite{steinke10} discuss nonparametric regression techniques between Riemannian manifolds. Specifically in the Riemannian manifold of symmetric positive-definite matrices, \cite{yuan12} develop a version of a local polynomial regression estimators, where higher order polynomials in this space are defined using parallel transport. Based on the notion of geodesics, \cite{fletcher13} introduces an analog of linear regression in symmetric Riemannian manifolds. These results are generalized and extended in \cite{cornea17}. 
\subsection{Model}\label{ssec:settings}
Let $(\mc Q, d)$ be a metric space. For $t \in [0, 1]$, let $Y_t$ be a $\mc Q$-valued random variable with finite second moment, i.e., $\Ex{d(Y_t, q)^2} < \infty$ for all $t\in[0,1]$ and $q\in\mc Q$. Let the regression function $m \colon [0,1] \to \mc Q$ be a minimizer $m_t \in \argmin_{q\in \mc Q} \Ex{d(Y_t, q)^2}$. 
Later, we will define certain smoothness conditions on $t\mapsto m_t$ (and on the change of the distribution of $Y_t$) to restrict the class of possible functions. We will consider nonparametric estimators which have access to following data: 
Let $x_i := \frac in$ and let $(y_i)_{i=1,\dots, n}$ be independent random variables with values in $\mc Q$ such that $y_i$ has the same distribution as $Y_{x_i}$. 

This model will be considered for two classes of metric spaces $\mc Q$: bounded metric spaces and Hadamard space. If a metric space $(\mc Q, d)$ fulfills $\sup_{q,p\in\mc Q} d(q,p) < \infty$, then it is called \textit{bounded}. This requirement simplifies the assumptions that require integrals of distances to be finite. Hadamard spaces are geodesic metric spaces of nonpositive curvature. Formally, a metric space $(\mc Q, d)$ is Hadamard if and only if it is complete, nonempty, and for all $q,p\in\mc Q$, there is $z \in \mc Q$ such that $d(y,z)^2 \leq \frac12 d(y,q)^2 + \frac12 d(y,p)^2- \frac14 d(q,p)^2$ for all $y\in\mc Q$. Hilbert spaces and complete simply-connected Riemannian manifolds of nonpositive sectional curvature are Hadamard, but also spaces without smooth structure like metric trees \cite[Proposition 3.4]{sturm03} or the space of phylogenetic trees \cite{billera01}.

To show the applicability in practice, the results are applied to the hyperspheres $\mb S^k$ and simulations are executed on the sphere $\mb S^2$.
\subsection{Two Approaches}
To construct an estimator for $t \mapsto m_t$, one may try to adapt a known Euclidean estimator to the new scenario. Two prominent approaches to this task are Fréchet regression \cite{petersen16} and geodesic regression \cite{fletcher13}.

\textbf{Fréchet Regression.}
The regression function $m_t$ is the Fréchet mean of $Y_t$, i.e., the minimizer of $\Ex{d(Y_t, q)^2}$ over $q\in\mc Q$. In Fréchet regression, we estimate the function $t \mapsto \Ex{d(Y_t, q)^2}$ for every fixed $q\in\mc Q$ by an Euclidean estimator $t \mapsto \hat F_t(q)$ using the data $(x_i, z_{q,i})_{i=1,\dots,n}\subset [0,1] \times \R$ with $z_{q,i} := d(y_i, q)^2$. In this step, we may use one of the standard nonparametric regression estimators for certain classes of functions $[0,1]\to\R$. Then $\hat F_t(q)$ is minimized over $q\in\mc Q$ for a fixed $t$ to obtain the estimator $\hat m_t$. 

\textbf{Geodesic Regression.}
Assume our metric space $\mc Q$ is equipped with an exponential map $\Exp \colon \Theta  \to \mc Q$, where $\Theta \subset \ms T\mc Q \subset \mc Q \times \R^k$ is a subset of the tangent bundle of $\mc Q$. A geodesic starting in point $p\in\mc Q$ and continuing in the direction $v \in \ms T_p\mc Q$ of the tangent space $\ms T_p\mc Q = \cb{u \in \R^k \colon (p, u) \in \ms T\mc Q}$ of $\mc Q$ at $p$ can be described as a function $\R \to\mc Q,\, x \mapsto \Exp(p, xv)$ with $(p,v) \in \ms T\mc Q$. In geodesic regression with covariates $x_i \in \R$, we minimize the empirical squared error 
\begin{equation}
	\sum_{i=1}^n d(y_i, \Exp(p, x_i v))^2
\end{equation}
 over $(p,v)\in \Theta$ to find the best fitting geodesic. All forms of geodesic regression built on this criterion or a modification of it. For example, we can extend it to multivariate regression
\begin{equation}
	\sum_{i=1}^n d\brOf{y_i, \Exp\brOf{p, \sum_{j=1}^J x_{i,j} v_j}}^2\eqcm
\end{equation}
where $x_{i}\in\R^J$ and $v_1, \dots, v_J \in \ms T_p\mc Q$ or more general feature regression
\begin{equation}
	\sum_{i=1}^n d\brOf{y_i, \Exp\brOf{p, \sum_{j=1}^J \psi_j(x_i) v_j}}^2\eqcm
\end{equation}
where $x_i \in\mc X$ for an arbitrary space of covariates $\mc X$ and features $\psi_j \colon \mc X \to \R$.
Furthermore, we may introduce weights $w_{i,t}$, e.g., $w_{i,t} = K((x_i-t)/h)$ for a kernel $K$ and a bandwidth $h >0$ to localize the procedure, and obtain (here for one-dimensional covariates)
\begin{equation}
	(\hat m_t, \hat {\dot m}_t) = \argmin_{(p, v) \in \Theta}\sum_{i=1}^n w_{i,t} d(y_i, \Exp(p, x_i v))^2
	\eqfs
\end{equation}
In this paper, we do not require the existence of an exponential map in the sense of Riemannian geometry. Instead, $\Exp\colon \Theta \to \mc Q, \Theta \subset \mc Q \times \R^k$ is required to fulfill certain distance bounds as described in our results on geodesic regression.
\subsection{Contribution}
We compare the two approaches of geodesic (\texttt{Geo}) and Frechet (\texttt{Fre}) regression on two regression estimators, namely local linear regression (\texttt{Loc}) and the orthogonal series estimator (\texttt{Ort}). This makes four estimation procedures, which we refer to as \texttt{LocGeo}, \texttt{LocFre}, \texttt{OrtGeo}, and \texttt{OrtFre}. 
For the resulting estimators, which we denote as $\hat m_t$, our goal is to show explicit finite sample bounds of the mean integrated squared error (MISE) of the form $\int_0^1\Ex{ d(m_t, \hat m_t)^2} \dl t  \leq C n^{-\alpha}$ for constants $C,\alpha > 0$. We are not interested in optimal universal constants, but the dependence on further parameters, like a moment bound, is to be explicit.
For \texttt{LocGeo}, \texttt{LocFre}, and \texttt{OrtFre} we find $\int_0^1\Ex{ d(m_t, \hat m_t)^2} \dl t  \leq C n^{-\frac{2\beta}{2\beta+1}}$, where $\beta>0$ is a smoothness parameter. Regarding the smoothness condition, we consider different models for different estimators. In particular, $\beta$ has a somewhat different meaning for each estimator. Nonetheless, the results are comparable and the optimal nonparametric rate of convergence $n^{-\frac{2\beta}{2\beta+1}}$ is shown to hold in these three cases.
\begin{itemize}
\item \texttt{LocFre}  (section \ref{sec:locfre}):
\cite{petersen16} introduce local constant (Nadaraya--Watson) and local linear Fréchet regression for general bounded metric spaces. For the local linear estimator, they show $d(\hat m_t, m_t) \in \mo O_\Pr(n^{-\frac25})$ and a more general version of this result, see Corollary 1 in their article. We show, for a general local polynomial Fréchet estimator of order $\ell\in\N_0$, the point-wise error bound $\Ex*{d(m_{t}, \hat m_{t})^2} \leq C n^{-\frac{2\beta}{2\beta+1}}$ for a constant $C > 0$ and a smoothness parameter $\beta > 0$, $\lfloor\beta\rfloor = \ell$, which implies the same rate for the MISE, \autoref{cor:locfre:bounded}, \autoref{cor:locfre:hadamard}. Our results are slightly more general with conditions slightly less demanding. Furthermore, bounds in expectation for finite $n$ are stronger than in $\mo O_\Pr$ and are needed to make the error bound of this estimator comparable to the others. As \cite{petersen16}, we demand a smoothness condition not directly on $t \mapsto m_t$, but on the change of the probability density of $Y_t$ in $t$.
\item \texttt{OrtFre} (section \ref{sec:trifre}):
We apply the approach of Fréchet regression to the orthogonal series projection estimator and arrive at a new estimator, \texttt{OrtFre}. For the trigonometric series as instance of an orthogonal series, we show $\Ex{\int_0^1 d(m_t, \hat m_t)^2 \dl t} \leq C n^{-\frac{2\beta}{2\beta + 1}}$ for a smoothness parameter $\beta \geq 1$ and a constant $C > 0$, \autoref{cor:trifre:bounded}, \autoref{cor:trifre:hadamard}. As for \texttt{LocFre} the smoothness condition is a requirement on the change of the density of $Y_t$ in $t$.
\item \texttt{LocGeo} (section \ref{sec:locgeo}):
We apply the approach of geodesic regression to the classical local linear estimator to obtain \texttt{LocGeo}. A local polynomial regression estimator of arbitrary order in the Riemannian manifold of symmetric positive definite matrices was already introduced in \cite{yuan12}. In contrast, the results here are restricted to a first order expansion, but they are applicable to a wide range of metric spaces. We show a point-wise error bound of $\Ex{d(m_t,\hat m_t)^2} \leq C n^{-\frac{2\beta}{2\beta+1}}$ for all $t\in[0,1]$, a smoothness parameter $\beta\in(1,2]$, and a constant $C>0$, which implies the same bound on the mean integrated squared error, \autoref{cor:locgeo:bounded}, \autoref{cor:locgeo:hadamard}. For this result, we assume a smoothness condition, which generalizes the Hölder condition that is common for local linear estimators. It demands that the true function $t\mapsto m_t$ can be locally approximated at $t$ by a geodesic up to an error of order $|x-t|^\beta$ for $x$ close to $t$.
\end{itemize}
In section \ref{sec:trigeo}, we discuss a construction of an \texttt{OrtGeo} estimator: We apply the geodesic regression approach to the orthogonal series projection estimator. We do not show optimal rates of convergence, and argue that this estimator may be sub-optimal as the properties that make it appealing in Euclidean spaces are lost in nonstandard spaces. Nonetheless, we include an estimator with the trigonometric series as the chosen orthogonal series in our simulation study.

Our goal is to make all theorems as general as reasonably possible. This manifests in quite abstract statements. To get a gist of the meaning of the abstract objects, we apply the general theorems on the hypersphere: \autoref{cor:locfre:sphere}, \autoref{cor:trifre:sphere}, and \autoref{cor:locgeo:sphere}. These corollaries illustrate our results and show that they are indeed applicable to explicit and interesting nonstandard spaces. Furthermore, abstract assumptions of the general theorems are justified by showing that they are fulfilled on the hyperspheres. 

The sphere is also the metric space used in our simulation study, section \ref{sec:simu}. To fulfill a variance inequality, which is an assumption for all our results, we introduce a new family of distributions on the sphere, the \textit{contracted uniform distributions}. All estimators are implemented using the statistical programming language R \cite{r}. The resulting package is freely available at \url{https://github.com/ChristofSch/spheregr}. Our experiments confirm and illustrate the theoretical findings. 

The proofs of all results can be found in the appendix \ref{sec:proofs}.
They partially built upon techniques developed in \cite{schoetz19}. The major tools to prove results in this setting are empirical process theory with chaining, e.g. \cite{vaart96} or \cite{talagrand14}, and a technique called \textit{slicing} or \textit{peeling}, e.g., \cite{geer00}. The proofs for local regression techniques partially follow the Euclidean version in \cite[section 1.6]{tsybakov08}, for trigonometric regression we build upon \cite[section 1.7]{tsybakov08}.
\subsection{Notation and Conventions}
Assumptions are named in small caps, e.g., \textsc{Moment}. The names of the presented methods are set in a typewriter font, e.g., \texttt{LocFre}.

We use a lower case $c$ for universal constants $c>0$. If the value depends on a variable, we indicate this by an index, e.g., $c_\kappa$ is a constant that depends only on $\kappa$. We do not specify the values of such constants. They are silently understood to take an appropriate value. Furthermore, the value may vary between two occurrences of such a constant.

A capital $C$ indicates a constant that has further meaning, which is usually described by a three letter index, e.g., we may require a moment condition $\Ex{d(Y_t, m_t)^2}\leq C_{\ms {Mom}}$ for all $t$ to be fulfilled. For simplicity, we assume these constants to be $\geq 1$, so that, e.g., $C_{\ms{Abc}}^2 + C_{\ms{Abc}} C_{\ms{Xyz}} \leq c C_{\ms{Abc}}^2 C_{\ms{Xyz}}$.

There is a silently underlying probability space  $(\Omega, \Sigma_{\Omega}, \Pr)$. If a random variable, say $Y$, has values in a set, say $\mc Y$, that set is silently understood to be a measurable space $(\mc Y, \Sigma_{\mc Y})$ and the random variable is a measurable map $Y \colon (\Omega, \Sigma_{\Omega}) \to (\mc Y, \Sigma_{\mc Y})$.

In each section, the estimator of the regression function at $t$ is denoted as $\hat m_t$. It depends on $n$ and potentially on further parameters like a bandwidth $h$, which will not be indicated in the notation but should be clear in the context.

For a vector $v \in \R^k$, we denote its Euclidean norm by $|v|$. For $\beta\in\R$, let $\lfloor \beta \rfloor$ be the largest integer strictly smaller than $\beta$.
Let $(\mc Q, d)$ be a metric space. To shorten the notation, we sometimes write $\ol qp$ instead of $d(q,p)$ for $q,p\in\mc Q$. Define the ball $\ball(o, d, \delta) := \{q\in\mc Q \colon \ol qo < \delta\}$ and the diameter $\diam(\mc Q, d) := \sup_{q,p\in\mc Q} \ol qp$. 

For the theorems below, we need a quantification of the entropy of the metric space $\mc Q$. To this end, we use Talagrands's $\gamma_2$ \cite{talagrand14} as defined below.
\begin{definition}\label{def:entropy}
\mbox{ }
\begin{enumerate}[label=(\roman*)]
\item 
	Given a set $\mc Q$, an \emph{admissible sequence} is an increasing
	sequence $(\mc A_k)_{k\in\N_0}$ of partitions of $\mc Q$ such that $\mc A_0 = \{\mc Q$\} and the cardinality of $\mc A_k$ is bounded as $\#\mc A_k \leq 2^{2^k}$ for $k\geq 1$.
	
	By an increasing sequence of partitions we mean that every set of $\mc A_{k+1}$ is
	contained in a set of $\mc A_k$. We denote by $A_k(q)$ the unique
	element of $\mc A_k$ which contains $q\in\mc Q$.
\item
	Let $(\mc Q, d)$ be a pseudo-metric space, i.e., $d$ is symmetric, fulfills the triangle inequality, and $d(q,q) = 0$ for all $q\in\mc Q$.
	Define 
	\begin{equation}
		\gamma_2(\mc Q, d) := \inf \sup_{q\in\mc Q}\sum_{k=0}^\infty 2^{\frac k2} \diam(A_k(q), d)\eqcm
	\end{equation}
	where the infimum is taken over all admissible sequences in $\mc Q$.
\end{enumerate}
\end{definition}
\subsection{Common Assumptions}
Following assumption are made for all results on rates of convergence of regression estimators in this article. They 
are conditions needed to bound the rate of convergence when estimating Fréchet means -- even without considering covariates, see \cite[Theorem 1]{schoetz19}.
\begin{assumptions}\label{ass:intro}\mbox{ }
\begin{itemize}
\item 
	\textsc{VarIneq}: There is $C_{\ms{Vlo}}\in[1,\infty)$ such that $C_{\ms{Vlo}}^{-1}\,\ol{q}{m_t}^2 \leq \Ex{d(Y_t, q)^2 - d(Y_t, m_t)^2}$ for all $q\in\mc Q$ and $t\in[0,1]$.
\item
	\textsc{Entropy}: 
	There are $C_{\ms{Ent}} \in [1,\infty)$ and $\alpha \in [1,2)$ such that 
	\begin{equation}
		\gamma_2(\mc B, d) \leq C_{\ms{Ent}}\max(\diam(\mc B, d), \diam(\mc B, d)^\alpha)
	\end{equation}
	for all $\mc B \subset \mc Q$.
\item 
	\textsc{Moment}:
	There are $\kappa > \frac{2}{2-\alpha}$ and $C_{\ms{Mom}} \in [1,\infty)$ such that $\Ex{d(Y_t, m_t)^\kappa}^{\frac1\kappa} \leq C_{\ms{Mom}}$ for all $t \in [0, 1]$.
\end{itemize}
\end{assumptions}
\begin{remark}\label{rem:lintro:assu}\mbox{ }
\begin{itemize}
\item \textsc{VarIneq}:
	This condition is also called variance inequality and is well-known in the context of Fréchet means in Alexandrov spaces, \cite{sturm03, ohta12, gouic19}.
	\textsc{VarIneq} is a condition on the noise distribution and the geometry of the metric space. It can be viewed as a quantitative version of the condition of unique Fréchet means $m_t$ of $Y_t$. The variance inequality not only ensures uniqueness of $m_t$, it also requires the objective function $\Ex{\ol{Y_t}{q}^2}$ to grow quadratically in the distance of a test point $q$ to the minimizer $m_t$. Intuitively, this is fulfilled when the noise distribution is not too similar to a distribution that has nonunique Fréchet means. 
	
	\textsc{VarIneq} is always true in Hadamard spaces \cite[Proposition 4.4]{sturm03}, which are geodesic metric spaces with nonpositive curvature and include the Euclidean spaces. For a variance inequality in spaces of nonnegative curvature, see \cite[Theorem 3.3]{ahidar20}. Furthermore, \autoref{prp:contracted} below shows an explicit construction of distributions  fulfilling \textsc{VarIneq}. We use this in section \ref{sec:simu} to construct a distribution for our simulations on the sphere.
\item \textsc{Entropy}:
	This condition can be viewed as a quantitative version of the requirement that balls in $\mc Q$ are totally bounded.

	We use Talagrand's $\gamma_2$ to formulate the entropy condition. Let $\mc B \subset \mc Q$. It holds
	\begin{equation}
		\gamma_2(\mc B, d) \leq \int_0^{\infty}  \sqrt{\log\brOf{N(\mc B, d, r)}} \dl r\eqcm
	\end{equation}
	where the integral is called \textit{entropy integral} and
	\begin{equation}
		N(\mc B, d, r) = \min\cbOf{k\in\N \,\bigg\vert\, \exists q_1,\dots,q_k\in\mc Q\colon \mc B \subset \bigcup_{j=1}^k \ball(q_j, d, r)}
	\end{equation}
	is the \textit{covering number}. Thus, we can use bounds on the entropy integral to fulfill \textsc{Entropy}, which is more common in the statistics literature. In some circumstances $\gamma_2$ is strictly lower than the entropy integral \cite[Exercise 4.3.11]{talagrand14}. One can further weaken the entropy condition as done in \cite{ahidar20} and \cite{schoetz19}, potentially at the cost of worse rates of convergence.
	
	In the Euclidean space $\R^k$, \textsc{Entropy} holds with $\alpha=1$ and $C_{\ms{Ent}} = 2\sqrt{k}$.
	If $\diam(\mc Q, d) < \infty$, one can choose $\alpha=1$ without loss of generality as the ratio between $\diam(\mc B, d)$ and $\diam(\mc B, d)^\alpha$ is bounded by the constant $\diam(\mc Q, d)^{\alpha-1}$.
	
	Next we consider an example in which $\alpha>1$ is needed. Take countably infinitely many intervals of length 1 and glue them together such that they form an infinite binary tree. This space with its intrinsic distance $d$ is an example of a metric tree and a Hadamard space \cite[Proposition 3.4]{sturm03}. A subset $\mc B$ in this space with diameter $2R$ has at most $3^{R+1}$ branches and all branches together have at most length $R3^{R+1}$. Thus, $N(\mc B, d, r) \leq c R\exp(c R)/r$ and we can calculate the bound $\gamma_2(\mc B, d) \leq c \max(R, R^{\frac32})$.		
\item \textsc{Moment}:
	This condition can be described as a moment condition. In Euclidean spaces $\mc Q = \R^k$, $d = \norm$, this is equivalent to $\Ex{\abs{Y_t - \Ex{Y_t}}^\kappa} < C_{\ms{Mom}}^\kappa$. Note that, due to the triangle inequality, $\Ex{d(Y_t, m_t)^\kappa} < \infty$ if and only if $\Ex{d(Y_t, q)^\kappa} < \infty$ for any $q\in\mc Q$ or, equivalently, for all $q \in \mc Q$. 
\end{itemize}
\end{remark}
\begin{proposition}[{\cite[section 5]{ohta12}}]\label{prp:contracted}
	Let $(\mc Q, d)$ be a proper Alexandrov space of nonnegative curvature.
	Let $Z_1$ be a random variable with values $\mc Q$ such that $\Ex{d(Z_1,q)^2} < \infty$ for all $q\in\mc Q$. Let $m \in \argmin_{q\in\mc Q}\Ex{\ol {Z_1}q^2}$ be any Fréchet mean of $Z_1$. For $a\in[0,1)$, let $Z_a := \gamma_{m \to Z}(a)$, where, for $z\in\mc Q$, $\gamma_{m \to z}$ is a geodesic with $\gamma_{m \to z}(0)=m$, $\gamma_{m \to z}(1)=z$. Then
	\begin{equation}
		(1-a)\ol qm^2 \leq \Ex{\ol{Z_a}q^2-\ol{Z_a}m^2}
	\end{equation}
	for all $a\in[0,1]$.
\end{proposition}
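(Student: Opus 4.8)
The plan is to reduce the statement to the triangle comparison property of Alexandrov spaces of curvature bounded below by $0$, apply it pointwise along the geodesics $\gamma_{m\to Y}$, and then take expectations, using at the very end that $m$ is a Fréchet mean of $Y$. The tool I would invoke is the quadratic form of the comparison: for a shortest constant-speed geodesic $\gamma\colon[0,1]\to\mc Q$ with $\gamma(0)=p$, $\gamma(1)=y$ and any $z\in\mc Q$,
\begin{equation*}
	d(\gamma(t),z)^2 \;\geq\; (1-t)\,d(p,z)^2 + t\,d(y,z)^2 - t(1-t)\,d(p,y)^2\qquad(t\in[0,1])\eqcm
\end{equation*}
which is an identity in a flat space and holds with $\geq$ (not the CAT$(0)$ orientation $\leq$) in nonnegative curvature, since geodesic triangles are no thinner than their Euclidean comparison triangles; I would either cite this directly from \cite{ohta12} or derive it from the Alexandrov four-point / triangle comparison together with the elementary computation of $d(\bar z,\bar s)^2$ in the Euclidean comparison triangle. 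As $\mc Q$ is proper it is geodesic, so for each $y$ a shortest geodesic $\gamma_{m\to y}$ exists; one needs a measurable choice so that $Y_a=\gamma_{m\to Y}(a)$ is a $\mc Q$-valued random variable, and then $\Ex{d(Y_a,q)^2}<\infty$ follows from $d(Y_a,m)=a\,d(Y,m)$ and $d(Y_a,q)\le a\,d(Y,m)+d(m,q)$ with $\Ex{d(Y,m)^2}<\infty$.

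Working pointwise, I would apply the displayed inequality along $t\mapsto\gamma_{m\to Y}(t)$ with $p=m$, $y=Y$, $z=q$, $t=a$, giving
\begin{equation*}
	d(Y_a,q)^2 \;\geq\; (1-a)\,d(m,q)^2 + a\,d(Y,q)^2 - a(1-a)\,d(m,Y)^2\eqcm
\end{equation*}
and combine it with $d(Y_a,m)^2=a^2\,d(m,Y)^2$ (clear from the constant-speed parametrization, or from the same inequality with $z=m$). Subtracting, the $d(m,Y)^2$-terms collapse via $a(1-a)+a^2=a$, leaving the clean pointwise bound
\begin{equation*}
	d(Y_a,q)^2 - d(Y_a,m)^2 \;\geq\; (1-a)\,d(q,m)^2 + a\br{d(Y,q)^2 - d(Y,m)^2}\eqfs
\end{equation*}

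Taking expectations and using $\Ex{d(Y,q)^2}\ge\Ex{d(Y,m)^2}$ (because $m$ minimizes $q\mapsto\Ex{d(Y,q)^2}$), the last term is nonnegative and can be dropped, which yields exactly $(1-a)\,d(q,m)^2\le\Ex{d(Y_a,q)^2-d(Y_a,m)^2}$; the endpoint $a=1$ ($Y_1=Y$) is the Fréchet property itself and $a=0$ ($Y_0=m$) is an equality. I expect the only genuine obstacle to be the measurable selection of shortest geodesics $\gamma_{m\to y}$ when $\mc Q$ is not uniquely geodesic — the place where properness is really used — and being careful to take the comparison inequality in the curvature-$\ge0$ direction; everything after that is the Euclidean comparison-triangle identity and the arithmetic identity $a(1-a)+a^2=a$.
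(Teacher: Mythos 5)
Your proposal is correct: the pointwise comparison inequality $d(Y_a,q)^2 \geq (1-a)d(m,q)^2 + a\,d(Y,q)^2 - a(1-a)d(m,Y)^2$ holds in the curvature-$\geq 0$ direction, the $d(m,Y)^2$ terms collapse via $a(1-a)+a^2=a$ against $d(Y_a,m)^2=a^2 d(m,Y)^2$, and the Fréchet-mean property of $m$ lets you drop the remaining nonnegative term after taking expectations. The paper does not prove this proposition itself but cites \cite[section 5]{ohta12}, and your argument is essentially the proof given there, so there is nothing further to reconcile.
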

%
\section{Local Fréchet Regression}\label{sec:locfre}
We use the principles of Fréchet regression on local polynomial regression. This yields local polynomial Fréchet regression, \texttt{LocFre}, which was introduced (in the local constant and local linear forms) in \cite{petersen16}. 

Let $K \colon \R \to\R$ be a function, the kernel. For $\ell\in \N_0$, $h > 0$, and $x,t\in[0,1]$ define
\begin{align}
\Psi(x) &:= \br{\frac{x^k}{k!}}_{k=0,\dots,\ell}\eqcm\\
B_{n,t} &:= \frac1{nh}\sum_{i=1}^n \Psi\brOf{\frac{x_i-t}{h}}\Psi\brOf{\frac{x_i-t}{h}}\tr K\brOf{\frac{x_i-t}{h}}\eqcm\\
w_{i,t} &:= \frac1{nh}\Psi(0)\tr B_{n,t}^{-1} \Psi\brOf{\frac{x_i-t}{h}} K\brOf{\frac{x_i-t}{h}}
\eqcm
\end{align}
whenever $B_{n,t}$ is invertible. Note that $w_{i,t}$ depends on $n, (x_j)_{j=1,\dots,n}$ and in particular on $h$, which is not indicated in the notation.
A local polynomial Fréchet estimator of order $\ell$ is any element
\begin{equation}
\hat m_{t} \in \argmin_{q\in\mc Q} \sum_{i=1}^n w_{i,t} d(y_i, q)^2\eqfs
\end{equation}

For denoting a smoothness condition required for this estimator to achieve the nonparametric rate of convergence, we need to refer to the \textit{Hölder class} $\Sigma(\beta, L)$ for $\beta, L > 0$. It is defined as the set of $\lfloor \beta\rfloor$-times continuously differentiable functions $f\colon [0,1]\to\R$ with $|f^{(\lfloor \beta\rfloor)}(t)-f^{(\lfloor \beta\rfloor)}(x)| \leq L\abs{x-t}^{\beta-\lfloor \beta\rfloor}$ for all $x,t\in [0,1]$.
\begin{assumptions}\label{ass:locfre}\mbox{ }
\begin{itemize}
\item \textsc{Kernel}:
	There are $C_{\ms{Kmi}}, C_{\ms{Kma}} \in [1,\infty)$ such that 
	\begin{equation}
		C_{\ms{Kmi}}^{-1} \ind_{[-\frac12,\frac12]}(x) \leq K(x) \leq C_{\ms{Kma}} \ind_{[-1,1]}(x)
	\end{equation}
	for all $x\in\R$.
\item \textsc{HölderSmoothDensity}:
The function $[0,1] \to \mc Q,\, t\mapsto m_t$ is continuous. Let $C_{\ms{Len}} \in [1,\infty)$ such that $\sup_{s,t\in[0,1]} d(m_s, m_t) \leq C_{\ms{Len}}$. 
	Let $\mu$ be a probability measure on $\mc Q$. Let $C_{\ms{Int}} \in [1,\infty)$ such that $\int \ol{y}{m_0}^2 \mu (\dl y) \leq C_{\ms{Int}}$. 
	Let $y\to\rho(y|t)$ be the $\mu$-density of $Y_t$.
	Let $\beta>0$ with $\ell = \lfloor\beta\rfloor$. For $\mu$-almost all $y\in\mc Q$, there is $L(y)\geq0$ such that $t \mapsto  \rho(y|t) \in \Sigma(\beta, L(y))$. Furthermore, there is a constant $C_{\ms{SmD}} \in [1,\infty)$, $\int L(y)^2 \dl \mu(y) \leq C_{\ms{SmD}}^2$.
\end{itemize}
\end{assumptions}
\textsc{Kernel} and a smoothness condition are classical requirements for a local polynomial estimators to obtain an optimal error bound \cite[Proposition 1.13]{tsybakov08}.
\begin{remark}\label{rem:locfre:assu}\mbox{ }
\begin{itemize}
\item \textsc{Kernel}:
	This is a typical condition on kernels for local kernel regression, see also \cite[Lemma 1.5]{tsybakov08}. It is fulfilled, e.g., by the rectangular kernel $\ind_{[-\frac12, \frac12]}(x)$ or the Epanechnikov kernel $\frac34(1-x^2)\ind_{[-1,1]}(x)$. \textsc{Kernel} likely could be weakened to allow for a greater variety of kernels, e.g., higher order kernels.
\item \textsc{HölderSmoothDensity}:
	If the noise distribution has a $\mu$-density and this density is smooth enough, \textsc{HölderSmoothDensity} can be interpreted as a smoothness condition on $t\mapsto m_t$:
	In a Euclidean space $\mc Q = \R^k$ with a location model $\rho(y|t) = f(\normof{y-m_t}^2)$ for a smooth function $f \colon [0,\infty) \to [0,\infty)$, we have $\partial_t \rho(y|t) = -2(y-m_t)\tr\dot m_t f\pr(\normof{y-m_t}^2)$, where $\dot m_t \in \R^k$ is the derivative of $x\mapsto m_x$ at $t$. If $f\pr$ is smooth enough and bounded, the smoothness of $\partial_t \rho(y|t)$ is dominated by the smoothness of $m_t$. Informally, the density should be as least as smooth as the regression function, to view this condition as a typical smoothness assumption on the regression function. It is likely an artifact of the proof that we require the error density to be smooth. 
\end{itemize}
\end{remark}
\begin{theorem}[\texttt{LocFre} Bounded]\label{cor:locfre:bounded}
	Let $(\mc Q, d)$ be a bounded metric space. Let $\beta > 0$ with $\ell = \lfloor\beta\rfloor$. Let $\hat m_{t}$ be the local polynomial estimator of order $\ell$ with $h\geq \frac cn$ and $n\geq c$.
	Assume \textsc{VarIneq}, \textsc{Entropy} with $\alpha=1$, \textsc{HölderSmoothDensity}, \textsc{Kernel}.
	Then 
	\begin{equation}
		\Ex*{\ol{m_t}{\hat m_t}^2}  
		\leq 
		C_1 h^{2\beta} + C_2 (nh)^{-1}
		\eqcm
	\end{equation}
	where 
	\begin{align*}
		C_1 &= c C_{\ms{Vlo}}^2C_{\ms{Ker}}^2 C_{\ms{SmD}}^2 \diam(\mc Q, d)^2\eqcm\\
		C_2 &= c C_{\ms{Vlo}}^2 C_{\ms{Ent}}^2 C_{\ms{Ker}}^2 \diam(\mc Q, d)^2\eqfs
	\end{align*}
\end{theorem}
\begin{theorem}[\texttt{LocFre} Hadamard]\label{cor:locfre:hadamard}
	Let $(\mc Q, d)$ be a Hadamard space. Let $\beta > 0$ with $\ell = \lfloor\beta\rfloor$. Let $\hat m_{t}$ be the local polynomial estimator of order $\ell$  with $c \geq h\geq \frac cn$ and $n\geq c$. Assume  \textsc{Moment}, \textsc{Entropy}, \textsc{HölderSmoothDensity}, \textsc{Kernel}.
	Then, for all $t\in[0,1]$, 
	\begin{equation}
		\Ex*{\ol{m_t}{\hat m_t}^2} 
		\leq 
		C_1 h^{2\beta}+ C_2(nh)^{-1}
		\eqcm
	\end{equation}
	where 
	\begin{align*}
		C_1 &= c_{\alpha,\kappa} \br{C_{\ms{Kmi}}^2C_{\ms{Kma}}^2 C_{\ms{SmD}} C_{\ms{Mom}} C_{\ms{Len}} C_{\ms{Int}}}^{\frac{2}{2-\alpha}}
		\eqcm\\
		C_2 &= c_{\alpha,\kappa} \br{C_{\ms{Mom}} C_{\ms{Ent}}C_{\ms{Kmi}}^2C_{\ms{Kma}}^2}^{\frac{2}{2-\alpha}}
		\eqfs
	\end{align*}
\end{theorem}
The two theorems are derived from a more general result in the appendix, \autoref{thm:locfre}.
We obtain the classical error bound for local polynomial estimators with a bias term $h^{2\beta}$ and a variance term $(nh)^{-1}$. 
If we set $h = n^{-\frac{1}{2\beta+1}}$, in both cases, we obtain the classical nonparametric rate of convergence $\Ex{\ol{m_t}{\hat m_t}^2} \leq C n^{-\frac{2\beta}{2\beta+1}}$. By integrating the inequality, we obtain the same bound for the MISE $\Ex{\int_0^1 \ol{m_t}{\hat m_t}^2 \dl t}$.
\begin{remark}
	\autoref{cor:locfre:hadamard} applied to the real line $(\mc Q, d) = (\R, \norm)$ yields almost the same result as the standard result for Euclidean local polynomial regression \cite[Proposition 1.13]{tsybakov08}. Aside from different constants, we require a finite moment of order $\kappa>2$ instead of $\kappa=2$ and the error density needs to change smoothly, see point \textsc{HölderSmoothDensity} in \autoref{rem:locfre:assu}. It seems remarkable that the results are so close as we have to do without an inner product and without vector space structure in the space of responses.
\end{remark}
\section{Orthogonal Series Fréchet Regression}\label{sec:trifre}

Let $(\psi_j)_{j\in\N}$ be a sequence of functions that form an orthonormal base in $\mb L^2[0,1]$, in particular, 
\begin{equation}
\int_0^1\psi_j(x)\psi_{\tilde j}(x) \dl x = \delta_{j{\tilde j}}
\end{equation}
for all $\tilde j,j \in \N$, where $\delta_{j{\tilde j}}$ is the Kronecker delta. 
Let $N \in \N$. Define $\Psi_N := (\psi_j)_{j=1,\dots,N}$.

Assume the matrix $B_{n} := \frac1n\sum_{i=1}^n\Psi_N(x_i)\Psi_N(x_i)\tr$ is invertible. The orthogonal series Fréchet regression estimator is 
\begin{equation}
	\hat m_t \in \argmin_{q\in\mc Q} \Psi_N(t)\tr B_{n}^{-1} \frac1n\sum_{i=1}^n\Psi_N(x_i) d(y_i, q)^2
	\eqfs
\end{equation}
For an explicit estimator, we have to choose an explicit orthogonal series. Because of its appealing theoretical properties among other things, the trigonometric series is a common choice. 
Let $(\psi_j)_{j\in\N}$ be the trigonometric basis of $\mb L^2[0,1]$, i.e., for $x\in[0,1]$, $j\in\N$,
\begin{align}
	\psi_1(x) &= 1\eqcm&
	\psi_{2j}(x) &= \sqrt{2} \cos(2\pi j x)\eqcm&
	\psi_{2j+1}(x) &= \sqrt{2} \sin(2\pi j x)\eqfs
\end{align}
The trigonometric basis is orthonormal. Furthermore, 
\begin{equation}
	\frac1n \sum_{i=1}^n \psi_j(x_i)\psi_{\tilde j}(x_i) = \delta_{j{\tilde j}}
\end{equation}
for $j,\tilde j\in \cb{1, \dots, n-1}$, see \cite[Lemma 1.7]{tsybakov08}. Thus, $B_{n}$ is the identity matrix if $N < n$ and the estimator simplifies to 
\begin{equation}
	\hat m_t \in \argmin_{q\in\mc Q} \Psi_N(t)\tr \frac1n\sum_{i=1}^n\Psi_N(x_i) d(y_i, q)^2
	\eqfs
\end{equation}
The appropriate smoothness class connected to the trigonometric basis $(\psi_j)_{j\in\N}$ is the \textit{periodic Sobolev class} $W^{\ms{per}}(\beta,L)$, see \cite[Definition 1.11]{tsybakov08}. 
A function $f(x) = \sum_{j=1}^\infty \vartheta_j \psi_j(x)$ belongs to $W^{\ms{per}}(\beta,L)$ if and only if the sequence $\vartheta=(\vartheta_j)_{j\in\N}$, $\vartheta_j = \int_0^1 f(x) \psi_j(x) \dl x$, of the Fourier coefficients of $f$ belongs to the ellipsoid $\mc E(\beta, L)$, which is defined as 
\begin{equation}
\mc  E(\beta, L) = \cb{\vartheta\in\ell^2(\R) \colon \sum_{j=1}^\infty \vartheta_j^2 a_j^{-2} \leq L^2}
\eqcm
\end{equation}
where $a_{2j+1} = a_{2j} = (2j)^{-\beta}$, see \cite[Proposition 1.14]{tsybakov08}.
\begin{assumptions}\label{ass:trifre}\mbox{ }
\begin{itemize}
\item \textsc{SobolevSmoothDensity}: 
	The function $[0,1] \to \mc Q,\, t\mapsto m_t$ is continuous. Let $C_{\ms{Len}} \in [1,\infty)$ such that $\sup_{s,t\in[0,1]} d(m_s, m_t) \leq C_{\ms{Len}}$. 
	Let $\mu$ be a probability measure on $\mc Q$. Let $C_{\ms{Int}} \in [1,\infty)$ such that $\int \ol{y}{m_0}^2 \mu (\dl y) \leq C_{\ms{Int}}$. 
	For all $t\in[0,1]$, the random variable $Y_t$ has a density $y\mapsto \rho(y|t)$ with respect to $\mu$. 
	Let $\beta\geq 1$. For $\mu$-almost all $y\in\mc Y$, there is $L(y)\geq0$ such that $t \mapsto \rho(y|t) \in W^{\ms{per}}(\beta, L(y))$. Furthermore, there is $C_{\ms {SmD}}\in[1,\infty)$ such that $\int L(y)^2 \dl \mu(y) \leq C_{\ms {SmD}}^2$. 
\end{itemize}
\end{assumptions}
\begin{remark}\label{rem:trifre:assu}\mbox{ }
\begin{itemize}
\item \textsc{SobolevSmoothDensity}: This condition is analogous to \textsc{Höl\-der\-Smooth\-Den\-sity} with the Hölder smoothness class replaced by the Sobolev smoothness class. Again, this condition can be interpreted as a smoothness condition on $t\mapsto m_t$ if the error density is smooth enough, see \autoref{rem:locfre:assu}. 

The trigonometric basis functions are periodic and the smoothness condition also requires $t \mapsto m_t$ to be periodic, i.e., identifying $t=0$ and $t=1$ should yield a well-defined function which is appropriately smooth at this transition.
\end{itemize}
\end{remark}
Further conditions are discussed in \autoref{rem:lintro:assu}.
\begin{theorem}[\texttt{OrtFre} Bounded]\label{cor:trifre:bounded}
	Let $(\mc Q, d)$ be a bounded metric space. Assume \textsc{VarIneq}, \textsc{Entropy}  with $\alpha=1$, \textsc{SobolevSmoothDensity}, and $N < n$. Then
	\begin{equation}
		\Ex*{\int_0^1 \ol{m_t}{\hat m_t}^2 \dl t} 
		\leq
		C_1 \br{N^{-2\beta} + N n^{1-2\beta}} + C_2 \frac Nn
		\eqcm 
	\end{equation}
	where
	\begin{align*}
		C_1 &= c_{\beta} C_{\ms{Vlo}}^2	C_{\ms{SmD}}^2 \diam(\mc Q)^2\eqcm\\
		C_2 &= c_{\beta} C_{\ms{Vlo}}^2	C_{\ms{Ent}}^2 \diam(\mc Q)^2\eqfs
	\end{align*}
\end{theorem}
\begin{theorem}[\texttt{OrtFre} Hadamard]\label{cor:trifre:hadamard}
	Let $(\mc Q, d)$ be a Hadamard metric space. Assume \textsc{Moment}, \textsc{Entropy} with $\alpha=1$, \textsc{SobolevSmoothDensity}, and $N \leq c \sqrt{n}$. Then
	\begin{equation}
		\Ex*{\int_0^1 \ol{m_t}{\hat m_t}^2 \dl t} 
		\leq
		C_1 \log(N+1)^2 \br{N^{-2\beta} + N n^{1-2\beta}} + C_2 \frac Nn
		\eqcm 
	\end{equation}
	where 	
	\begin{align*}
		C_1 &= c_{\kappa,\beta} C_{\ms{SmD}}^2 C_{\ms{Len}}^2 C_{\ms{Mom}}^2 C_{\ms{Int}}^2\eqcm\\
		C_2 &= c_{\kappa,\beta} C_{\ms{Mom}}^2 C_{\ms{Ent}}^2\eqfs
	\end{align*}
\end{theorem}
Note that for \texttt{OrtFre}, we require $\alpha=1$ in \textsc{Entropy} also in the case of Hadamard spaces. In contrast, for \texttt{LocFre} and \texttt{LocGeo} we allow $\alpha \in [1, 2)$.

We obtain the classical error bound for trigonometric series estimators with a bias term $N^{-2\beta}$ and a variance term $\frac Nn$. The term $Nn^{1-2\beta}$ is of lower order than $\frac Nn$ for $\beta > 1$ and can be discarded for large $n$ in this case. If we set $N = n^{\frac{1}{2\beta + 1}}$, we obtain the classical nonparametric rate of convergence $\Ex{\int_0^1 \ol{m_t}{\hat m_t}^2 \dl t} \leq C n^{-\frac{2\beta}{2\beta+1}}$ with an additional $\log(n)^2$ factor in the Hadamard case. The two theorems are derived from a more general result in the appendix, \autoref{thm:trifre}. Point-wise results are not obtained here.
\begin{remark}
	\autoref{cor:trifre:hadamard} applied to the real line $(\mc Q, d) = (\R, \norm)$ with $N = n^{\frac1{2\beta+1}}$ yields the same bound as the standard result for Euclidean trigonometric series regression \cite[Theorem 1.9]{tsybakov08} up to the $\log(n)^2$ factor and constant factors. The requirements are slightly stricter: A finite moment of order $\kappa>2$ is assumed instead of $\kappa=2$ and the error density needs to change smoothly, see point \textsc{SobolevSmoothDensity} in \autoref{rem:trifre:assu} and \textsc{HölderSmoothDensity} in \autoref{rem:locfre:assu}. 
\end{remark}
%
\section{Local Geodesic Regression}\label{sec:locgeo}
We investigate an estimator, \texttt{LocGeo}, that locally fits (generalized) geodesics of the form $x\mapsto \Exp(p, xv)$: Let $h \geq \frac2n$. Let $K \colon \R \to\R$ be a function, the kernel. For $t\in[0,1]$, define the weight function $w_h(t,x) = \frac1h K(\frac{x-t}{h})$ and the normalized weights $w_{i,t} = w_h(t,x_i) (\sum_{j=1}^n w_h(t,x_j))^{-1}$. Note that $w_{i,t}$ depends on $n, (x_j)_{j=1,\dots,n}$ and in particular on $h$, which is not indicated in the notation. Let $\Theta\subset\mc Q\times \R^k$ be a set, the set of parameters of geodesics. Let $R\geq 1$ and set 
\begin{equation}
	\Theta_h := \Theta \cap \br{\mc Q \times \overline{\ball(0, \norm, Rh^{-1})}}
	\eqfs
\end{equation}
Let $\Exp \colon \Theta \to \mc Q, (p,v) \mapsto \Exp(p, v)$ be a function, the exponential map. 
Let 
\begin{equation}
(\hat p_{t,h}, \hat v_{t,h}) \in \argmin_{(p,v)\in\Theta_h} \sum_{i=1}^n w_{i,t} \,d\Big(y_i,  \Exp\big(p, (x_i-t)v\big)\Big)^2
\quad\text{and}\quad\hat m_t = \hat p_{t,h}
\eqfs
\end{equation} 
\begin{remark}
	For a geodesic $t \mapsto  \Exp(p, tv)$ defined by $(p,v) \in \Theta$, the parameter $v$ determines the speed of the geodesic. In some spaces, allowing arbitrary speeds when fitting geodesics can have adverse effects:
	
	Consider the circle $\mc Q = \mb S^1 = [0, 1)$ with its intrinsic distance $d = d_{\mb S^1}$. 
	In contrast to our model, we assume here that the $x_i$ do not form a grid, but instead are irregular in following sense: If $\sum_{i=1}^n a_i x_i \in \Z$ for $a_i \in \Z$, then $a_i = 0$ for $i = 1,\dots n$. In particular, all $x_i$ and all ratios between different $x_i$ are irrational. Let $y_i \in \mb S^1$, $i = 1,\dots n$. Then we can find a geodesic on the circle that uniformly approximates all $(x_i, y_i)_{i=1,\dots,n}$ arbitrarily well: For all $\varepsilon >0$, there is $v \in \R$ such that 
	\begin{equation}
		d_{\mb S^1}(\Exp(0, x_i v), y_i) = \abs{[x_iv] - y_i} < \varepsilon\eqcm
	\end{equation}
	$i = 1,\dots n$, where $[a] = a - \max\{k \in \mb Z \colon k \leq a\} \in \mb S^1$.
	This is a consequence of Kronecker's theorem on diophantine approximation, see \autoref{prp:kronecker} below (with $p=1$).
	
	Even though we have a regular grid, $x_i = \frac in$, in our setting, similar effects might occur if we allow $v$ to be arbitrarily large.
	This is prevented be minimizing over $\Theta_h$ instead of $\Theta$ when fitting geodesics.
\end{remark}
\begin{proposition}[Kronecker's Theorem {\cite{kronecker68}}]\label{prp:kronecker}
	Let $X\in\R^{n\times k}$ and $y \in \R^n$. Then 
	\begin{equation}
		\forall \varepsilon > 0 \colon \exists v \in \Z^k, b \in \Z^n\colon  \abs{X v - b - y} < \varepsilon\eqfs
	\end{equation} 
	if and only if $a\tr X \in \Z^k$ implies $a\tr y \in \Z$ for $a\in\Z^n$.
\end{proposition}
\begin{assumptions}\label{ass:locgeo}\mbox{ }
\begin{itemize}
\item
	\textsc{HölderSmoothEx}:
	Let $\beta > 0$.
	There is $C_{\ms{Smo}} \in [1, \infty)$ such that for all $t\in[0,1]$, there is $(p_t, v_t) \in \Theta_h$ such that $\Ex{d(Y_x, \Exp(p_t, (x-t)v_t))^2 - d(Y_x, m_x)^2} \leq C_{\ms{Smo}}^2\abs{x-t}^{2\beta}$ for all $x\in[0,1]$.
\item \textsc{ExpMap}:
	There are $ C_{\ms{Mup}},  C_{\ms{Mlo}} \in [1,\infty)$ such that
	\begin{align}
		d\brOf{\Exp(q, v), \Exp(p, u)} &\leq C_{\ms{Mup}} \brOf{d(q,p) + \normof{v-u}}\eqcm \\
		\int_{-\frac12}^{\frac12} d\brOf{\Exp(q, xv), \Exp(p, xu)}^2 \dl x &\geq C_{\ms{Mlo}}^{-2} \brOf{d(q,p)^2 + \normof{v-u}^2}
	\end{align}
	for all $(q,v), (p,u) \in \Theta$ with $\normof{u}, \normof{v} \leq R$.
\end{itemize}
\end{assumptions}
\begin{remark}\mbox{ }\label{rem:locgeo:assu}
\begin{itemize}
\item \textsc{HölderSmoothEx}:
	\textsc{HölderSmoothEx} can be understood as a Hölder-smoothness condition. But it involves not only $t\mapsto m_t$ but also the distribution of the observations similar to \textsc{HölderSmoothDensity}. 
	
	\textsc{VarIneq} implies that $d(\Exp(p_t, (x-t) v_t), m_x)^2 \leq C_{\ms{Vlo}} C_{\ms{Smo}}^2 \abs{x-t}^{2\beta}$. For $\beta \in (1, 2]$ on the real line with $\Exp(p_t, (x-t) v_t) = m_t + (x-t)\dot m_t$, this becomes the standard Hölder-condition, i.e., $t\mapsto m_t \in \Sigma(\beta, L)$ for a constant $L>0$.
	
	If a reverse variance inequality holds, i.e.,  $\Ex{d(Y_t, \Exp(p_t, (x-t) v_t))^2 - d(Y_t, m_t)^2} \leq C_{\ms{Vup}} d(q,m_t)^2$, then the Hölder-type bounds on $d(\Exp(p_t, (x-t) v_t), m_x)^2$ and on $\Ex{d(Y_t, q)^2 - d(Y_t, m_t)^2}$ are equivalent (up to constants). Such a reverse variance inequality always holds in proper Alexandrov spaces of nonnegative curvature (like the the Euclidean spaces or hyperspheres) with $C_{\ms{Vup}} = 1$, \cite[Theorem 5.2]{ohta12}. See also \cite[Theorem 8]{gouic19} for a variance equality, from which both a variance inequality and a reverse variance inequality may be deduced in certain spaces.
\item \textsc{ExpMap}: 
	This condition relates two distances on $\Theta$, which are induced by $d$ and $\Exp$, to the metric $d$ on $\mc Q$ and the Euclidean norm on $\R^k$. The theorems below are derived from a more general result in the appendix, \autoref{thm:locgeo}, which shows how this condition may be relaxed (to conditions \textsc{IntBoundsSup} and  \textsc{Lipschitz}, \autoref{ass:locgeo:app}).
	
	In Euclidean spaces, the geodesics are $t\mapsto \Exp(p, tv) = p + tv$ for $p, v \in \R^k$. Thus, 
	\begin{equation}
		d\brOf{\Exp(q, v), \Exp(p, u)} \leq  \normof{q - p} + \normof{v - u}
	\end{equation}
	and 
	\begin{align}
		\int_{-\frac12}^{\frac12} d\brOf{\Exp(q, xv), \Exp(p, xu)}^2 \dl x
		&=
		\int_{-\frac12}^{\frac12} \normof{(q-p) + x(v-u)}^2 \dl x
		\\&=
		\normof{q-p}^2 + \frac1{12} \normof{v-u}^2
		\eqcm
	\end{align}
	i.e., the condition holds with $C_{\ms{Mup}} = 1$ and $C_{\ms{Mlo}} = \sqrt{12}$.
	
	\textsc{ExpMap} or (its relaxations in the appendix, \autoref{ass:locgeo:app}) are not fulfilled for branching geodesics, i.e., if there are geodesics $\gamma_1, \gamma_2$ such that $\gamma_1(t) = \gamma_2(t)$ for $t \in [a,b]$ for $a<b$ and  $\gamma_1(t) \neq \gamma_2(t)$ for $t \in [a\pr,b\pr]$ for $a\pr<b\pr$. The reason is that in this case the integral over the distance of the geodesics on an interval can be of smaller order than the supremum of the distance of the two geodesics on the interval.
\end{itemize}
\end{remark}
Further conditions are discussed in \autoref{rem:lintro:assu}.
\begin{theorem}[\texttt{LocGeo} Bounded]\label{cor:locgeo:bounded}
	Let $(\mc Q, d)$ be a bounded metric space.
	Assume \textsc{VarIneq}, \textsc{Entropy} with $\alpha=1$, \textsc{HölderSmoothEx}, \textsc{Kernel}, \textsc{ExpMap}, and $h\geq \frac cn$.
	Then 
	\begin{equation}
		\Ex*{\ol{m_t}{\hat m_t}^2} \leq  C_1 (nh)^{-1} + C_2 h^{2\beta}
		\eqcm
	\end{equation}
	where 
	\begin{align*}
		C_1 &= c C_{\ms{Kmi}}C_{\ms{Kma}} C_{\ms{Vlo}} C_{\ms{Smo}}^2\eqcm\\
		C_2 &= c  C_{\ms{Mup}}^4C_{\ms{Mlo}}^4 C_{\ms{Kmi}}^3C_{\ms{Kma}}^3 C_{\ms{Ent}}^2 C_{\ms{Vlo}}^2 R k \diam(\mc Q, d)^2\eqfs
	\end{align*}
\end{theorem}
\begin{theorem}[\texttt{LocGeo} Hadamard]\label{cor:locgeo:hadamard}
	Let $(\mc Q, d)$ be a Hadamard space.
	Assume \textsc{Entropy}, \textsc{Moment}, \textsc{HölderSmoothEx}, \textsc{Kernel}, \textsc{ExpMap}, and $h\geq \frac cn$.
	Then 
	\begin{equation}
		\Ex*{\ol{m_t}{\hat m_t}^2} \leq  C_1 (nh)^{-1} + C_2 h^{2\beta}
		\eqcm
	\end{equation}
	where 
	\begin{align*}
		C_1 &= c_\kappa C_{\ms{Kmi}}C_{\ms{Kma}}C_{\ms{Smo}}^2\eqcm\\
		C_2 &= c_{\alpha,\kappa}  \br{
		C_{\ms{Mup}}^4C_{\ms{Mlo}}^{2+2\alpha} C_{\ms{Kmi}}^3C_{\ms{Kma}}^3 C_{\ms{Ent}}^2 C_{\ms{Mom}}^2 R k}^{\frac{2}{2-\alpha}}\eqfs
	\end{align*}
\end{theorem}
The two theorems are derived from a more general result in the appendix, \autoref{thm:locgeo}.
As for \texttt{LocFre}, we obtain the classical error bound for local linear estimators with a bias term $h^{2\beta}$ and a variance term $(nh)^{-1}$.
If we set $h = n^{-\frac{1}{2\beta+1}}$, in both cases we obtain the classical nonparametric rate of convergence $\Ex{\ol{m_t}{\hat m_t}^2} \leq C n^{-\frac{2\beta}{2\beta+1}}$. By integrating the inequality, we obtain the same bound for the MISE $\Ex{\int_0^1 \ol{m_t}{\hat m_t}^2 \dl t}$.
\begin{remark}
	\autoref{cor:locgeo:hadamard} applied to the real line $(\mc Q, d) = (\R, \norm)$ yields almost the same result as the standard result for Euclidean local liner regression \cite[Proposition 1.13]{tsybakov08}. Aside from different constants, we require a finite moment of order $\kappa>2$ instead of $\kappa=2$. Furthermore, by minimizing over $\Theta_h$ instead of $\Theta$, we assume that the derivative of $t\mapsto m_t$ is bounded by $R h^{-1}$, which is not a significant drawback as any meaningful choice of $h$ implies $h\to 0$ as $n \to \infty$. In contrast to \texttt{LocFre}, the smoothness condition is equivalent to the usual Hölder smoothness assumption, see \autoref{rem:locgeo:assu} on \textsc{HölderSmoothEx}.
\end{remark}
\begin{remark}
	 As mentioned in \autoref{rem:locgeo:assu}, \textsc{HölderSmoothEx} becomes the standard Hölder condition of local linear estimation on the real line for $\beta\in (1,2]$. To be able to utilize higher order smoothness, higher degree polynomials are required. As these are not easily available in general geodesic metric spaces, we restrict the estimator to geodesics, which can be viewed as degree one polynomials. Even though the smoothness condition for \autoref{cor:locgeo:bounded} and \autoref{cor:locgeo:hadamard} is stated with arbitrary $\beta>0$, it is suspected to be difficult to find large classes of interesting functions where \textsc{HölderSmoothEx} holds with $\beta > 2$.
\end{remark}
%
\section{Short Discussion of Orthogonal Series Geodesic Regression}\label{sec:trigeo}
After establishing results for Fréchet regression with local linear and orthogonal series approaches and for geodesic regression with a local linear approach, a natural next combination to discuss is geodesic regression with orthogonal series approach.

Let $(\psi_j)_{j\in\N}$ be a sequence of functions that form an orthonormal base in $\mb L^2[0,1]$. An \texttt{OrtGeo} estimator $\hat m_t$ based on $N\in\N$ basis functions may be defined as
\begin{align}
	(\hat p,\, \hat v_1, \dots, \hat v_N) &\in \argmin_{p \in \mc Q, v_j \in \ms T_p\mc Q} d\brOf{\Exp\brOf{p, \sum_{j=1}^N \psi_j(x_i)v_j}, y_i}^2\eqcm
	\\\hat m_t &:= \Exp\brOf{\hat p, \sum_{j=1}^N \psi_j(t)\hat v_j}
	\eqfs
\end{align}
In contrast to \texttt{LocGeo}, observations are not weighted differently for different $t$. Thus, the estimated parameters $(\hat p, \hat v_1, \dots, \hat v_N)$ do not depend on $t$. Where the \texttt{LocGeo} estimator is $\Exp(\hat p_t, 0)$ and ignores the direction $\hat v_t$, \texttt{OrtGeo} uses the estimated directions $\hat v_1, \dots, \hat v_N$ to encode the time-dependence of the estimated curve.

For orthogonal series estimators, one usually bounds the mean integrated squared error (MISE), as this makes it possible to utilize the orthogonality property of $(\psi_j)_{j\in\N}$. The orthogonality allows to use the $N+1$ estimated parameters in an optimal way so that for a suitable choice of $N$ depending on $n$ the best possible rate of convergence can be achieved. In the metric space setting, geodesics may not be orthogonal in an $\mb L^2$-sense: For $p,u,v\in \R^k$, $\tilde j \neq j$, we have
\begin{equation}
\int_0^1\normOf{\br{p + \psi_j(x) u} - \br{p + \psi_{\tilde j}(x)v}}^2 \dl x = \normof{u}^2 + \normof{v}^2
\eqcm
\end{equation}
but for $p$ in a general metric space, the analogous equality with a left-hand side 
\begin{equation}
\int_0^1 d\brOf{\Exp\brOf{p, \psi_j(x)u}, \Exp\brOf{p, \psi_{\tilde j}(x) v}}^2 \dl x
\end{equation}
might not be true.

We were not able to show a theorem similar to the results in the previous sections. Of course, this does not mean that the estimator above will necessarily perform badly. 

The estimator was implemented for simulations (section \ref{sec:simu}). This revealed another drawback: High-dimensional non-convex optimization is required so that \texttt{OrtGeo} is -- by far -- the slowest of all tested methods. But in some settings the estimator performs quite well, making it or modifications of it appealing for further investigations. In other settings, the performance is much worse than for the other estimators. It is not clear, whether this is due to theoretical disadvantages or a worse outcome of the general purpose optimizer used for finding $(\hat p, \hat v_1, \dots, \hat v_N)$.
%
\section{Hypersphere}
To illustrate our results for the estimators \texttt{LocFre}, \texttt{OrtFre}, and \texttt{LocGeo}, we apply them to the hyperspheres.

Let $k \in \N$. Let $\mb S^k = \{x\in \R^{k+1} \colon \abs{x}=1\}$ be the hypersphere with radius 1 as a subset of $\R^{k+1}$. We equip $\mb S^k$ with its intrinsic metric $d(q, p) = \arccos(q\tr p)$. Let $\ms T\mb S^k = \bigcup_{q\in\mb S^k} (\{q\} \times \ms T_{q} \mb S^k)$ be the tangent bundle, where $\ms T_{q} \mb S^k = \{v \in \mb R^{k+1} \ |\ q\tr v = 0 \}$ is the tangent space at $q\in\mb S^k$. The exponential map is $\Exp \colon \ms T\mb S^k \to \mb S^k,\, (q,v) \mapsto \cos(\abs{v})q + \sin(\abs v)\frac{v}{\abs v}$. Geodesics can be represented by a tuple $(p,v) \in \ms T\mb S^k$ as $x \mapsto \Exp(p, xv)$. 

For $t \in [0, 1]$, let $Y_t$ be a $\mb S^k$-valued random variable. Let the regression function $m \colon [0,1] \to \mb S^k$ be a minimizer $m_t \in \argmin_{q\in \mb S^k} \Ex{d(Y_t, q)^2}$. 
Let $x_i = \frac in$ and let $(y_i)_{i=1,\dots, n}$ be independent random variables with values in $\mb S^k$ such that $y_i$ has the same distribution as $Y_{x_i}$. 

In the following corollaries, we will always assume \textsc{VarIneq}: There is $C_{\ms{Vlo}}\in[1,\infty)$ such that $C_{\ms{Vlo}}^{-1} \,\ol q{m_t}^2 \leq \Ex{\ol{Y_t}q^2 -\ol{Y_t}{m_t}^2}$ for all $q\in\mb S^k$ and $t\in[0,1]$. This condition implies that $m_t$ is the unique minimizer of $\Ex{d(Y_t, q)^2}$. The hypersphere is a proper Alexandrov space of nonnegative curvature. Thus, \autoref{prp:contracted} shows that large classes of distributions fulfill this property. 

To fulfill the \textsc{Kernel} conditions for the local estimators, we here use the Epanechnikov kernel $x \mapsto \frac34(1-x^2) \ind_{[-1,1]}(x)$, i.e., we can set $C_{\ms{Kmi}} = \frac{16}9$ and $C_{\ms{Kma}} = 1$.

Each estimator requires a different smoothness condition as stated below. To state those, let $\mu$ be a the measure of the uniform distribution on $\mb S^k$.
\begin{corollary}[\texttt{LocFre} Hypersphere]\label{cor:locfre:sphere}
	Let $\beta> 0$ and $C_{\ms{SmD}} \geq 1$.		
	Assume \textsc{VarIneq} and use the Epanechnikov kernel.
	Choose $h = n^{-\frac{1}{2\beta+1}}$.
	Then the \texttt{LocFre} estimator $\hat m_t$ of order $\ell=\lfloor \beta \rfloor$ achieves
	\begin{equation}
		\limsup_{n\to\infty} \sup_{(P^{Y_t})_{t\in[0,1]}} n^{\frac{2\beta}{2\beta+1}} \Ex*{\int_0^1 \ol{m_t}{\hat m_t}^2 \dl t} 
		\leq 
		C 
		\eqcm
	\end{equation}
	where $C =  c C_{\ms{Vlo}}^2 C_{\ms{SmD}}^2 k$ and the supremum is taken over all distributions $(P^{Y_t})_{t\in[0,1]}$ of each $Y_t$ such that the following smoothness condition is fulfilled: $P^{Y_t}$ has a $\mu$-density $y\mapsto \rho(y|t)$ and for $\mu$-almost all $y\in\mb S^k$, $t \mapsto  \rho(y|t) \in \Sigma(\beta, C_{\ms{SmD}})$.
\end{corollary}
\begin{corollary}[\texttt{OrtFre} Hypersphere]\label{cor:trifre:sphere}
	Let $\beta> 0$ and $C_{\ms{SmD}} \geq 1$.		
	Assume \textsc{VarIneq}. Choose $N = \lfloor n^{\frac{1}{2\beta+1}} \rfloor$. 
	Then the \texttt{OrtFre} estimator $\hat m_t$ achieves
	\begin{equation}
		\limsup_{n\to\infty} \sup_{(P^{Y_t})_{t\in[0,1]}} n^{\frac{2\beta}{2\beta+1}}  \Ex*{\int_0^1 \ol{m_t}{\hat m_t}^2 \dl t} 
		\leq
		C
		\eqcm
	\end{equation}
	where $C = c_{\beta} C_{\ms{Vlo}}^2 C_{\ms{SmD}}^2 k$ and the supremum is taken over all distributions $(P^{Y_t})_{t\in[0,1]}$ of each $Y_t$ such that the following smoothness condition is fulfilled: $P^{Y_t}$ has a $\mu$-density $y\mapsto \rho(y|t)$ and for $\mu$-almost all $y\in\mb S^k$, $t \mapsto \rho(y|t) \in W^{\ms{per}}(\beta, C_{\ms {SmD}})$.
\end{corollary}
\begin{corollary}[\texttt{LocGeo} Hypersphere]\label{cor:locgeo:sphere}
	Let $\beta > 0$ and $C_{\ms{Smo}} \in [1, \infty)$.
	Assume \textsc{VarIneq} and use the Epanechnikov kernel.
	Choose $h = n^{-\frac{1}{2\beta+1}}$.
	Let $\Theta = \ms T\mb S^k$ and set $\Theta_h = \cb{(p,v) \in \Theta \colon |v| \leq h^{-1}}$.
	Then the \texttt{LocGeo} estimator $\hat m_t$ achieves
	\begin{equation}
		\limsup_{n\to\infty} \sup_{(P^{Y_t})_{t\in[0,1]}} n^{\frac{2\beta}{2\beta+1}} \Ex*{\int_0^1 \ol{m_t}{\hat m_t}^2 \dl t} 
		\leq 
		C
		\eqcm
	\end{equation}
	where $C = c C_{\ms{Smo}}^2 C_{\ms{Vlo}}^2 k^2$ and the supremum is taken over all distributions $(P^{Y_t})_{t\in[0,1]}$ of each $Y_t$ such that the following smoothness condition is fulfilled: For all $x,t\in[0, 1]$, $d(m_x, \Exp(m_t, (x-t)\dot m_t)) \leq C_{\ms{Smo}}\abs{x-t}^\beta$, where $\dot m_t \in \ms T_{m_t} \mb S^k$ is the derivative of $m_t$.
\end{corollary}
%
\section{Simulation}\label{sec:simu}
There is a total of 4 methods discussed in this article: \texttt{LocGeo}, \texttt{LocFre}, \texttt{OrtGeo}, \texttt{OrtFre}. For the latter two, we only consider the trigonometric basis. To illustrate and compare these methods on the sphere, the R-package \texttt{spheregr} was developed. All code used for this paper, including all scripts which create the plots and run and evaluate the experiments shown in this section, are freely available at \url{https://github.com/ChristofSch/spheregr}.

Each method requires numerical optimization. We use R's general purpose optimizers \texttt{stats::optim(method = "L-BFGS-B")} and \texttt{stats::optimize()}, both without explicit implementation of derivatives, but with several starting points. The implementations could potentially be improved by using the algorithm presented in \cite{eichfelder19}. For alternative implementation of geodesic regression, see \cite{shin20}.

The Fréchet methods are faster than geodesic methods, as the optimization problem for geodesics is of higher dimension. We use \textit{leave-one-out cross-validation} (LOOCV) to estimate the hyperparameters ($h$ for \texttt{LocGeo} and \texttt{LocFre}, $N$ for \texttt{OrtFre}). For \texttt{OrtGeo} it did not seem feasible to do many repetitions of the experiments with cross-validation in each run. Instead we set $N=3$ for this method to be able to calculate a result. In doing so, we effectively reduce the method to a parametric estimator. See \autoref{table:opi} for a summary of the optimization dimensions and frequencies used in the simulation. For \texttt{LocGeo} and \texttt{LocFre}, we use the Epanechnikov kernel.

\begin{table}
\begin{center}
\begin{tabular}{c|c|c|c|c}
 & \texttt{LocFre} & \texttt{OrtFre} & \texttt{LocGeo} & \texttt{OrtGeo} ($N = 3$) \\
\hline
space to optimize in & $\mb S^2$ & $\mb S^2$ & $\mb S^2 \times \ms T \mb S^2$  & $\mb S^2 \times (\ms T \mb S^2)^3$  \\
\hline
dimension & 2 & 2 & 4 & 8 \\
\hline
frequency & $\forall t$ & $\forall t$ & once & once \\
\hline
repetitions for LOOCV & $n$ & $n$ & $n$ & 1 
\end{tabular}
\end{center}
\caption{Properties of the optimizations executed in the implementation of the four regression methods to evaluate $\hat m_t$.}\label{table:opi}
\end{table}
\subsection{Contracted Uniform Distribution}
For the distribution of $Y_t$, we choose the contracted uniform distribution $\ms{CntrUnif}(m_t, a)$ with $a\in(0,1)$, which we define next. The contracted uniform distribution is obtained from the uniform distribution on the sphere by moving all points towards a center point along the connecting geodesic by a given fraction of the total distance. 

Let $\mb S^2 = \{x\in \R^{3} \colon \abs{x}=1\}$ be the sphere with radius 1 and intrinsic metric $d(q, p) = \arccos(q\tr p)$. We may describe points $q \in \mb S^2$ via two angles $(\vartheta_q, \varphi_q) \in [0,\pi]\times[0,2\pi)$ such that $q = (\sin(\vartheta_q)\cos(\varphi_q), \sin(\vartheta_q)\sin(\varphi_q), \cos(\vartheta_q))$.

\begin{definition}\label{def:contrunif}
	Let $a\in[0,1]$.
	Let $(\Theta, \Phi)$ be random angles with values in $[0,\pi]\times [0,2\pi)$ that form a uniform distribution on the sphere, i.e., they are independent, $\Theta$ has Lebesgue density $\frac12\sin(x)\ind_{[0,\pi]}(x)$, and $\Phi$ is uniformly distributed on $[0, 2\pi)$.
	Let 
	\begin{equation}
		Z_a = 
		\begin{pmatrix}
			\sin(a\Theta)\cos(\Phi)\\
			\sin(a\Theta)\sin(\Phi)\\
			\cos(a\Theta)
		\end{pmatrix}
		\eqfs
	\end{equation}
	Let $m \in \mb S^2$. Let $R_m \in O(3) \subset \R^{3\times3}$ be any orthogonal matrix that fulfills $m = R_m e_3$, where $e_3\tr = (0\ 0 \ 1)$. Then the \emph{contracted uniform distribution} $\ms{CntrUnif}(m, a)$ at $m$ with contraction parameter $a$ is defined as the distribution of $R_m Z_a$.
\end{definition}
The matrix $R_m$ in the definition above is not unique, but the symmetry of the distribution of $Z_a$ ensures that the contracted uniform distribution is well-defined.

Two important properties are implied by \autoref{prp:contracted}: For $a \in[0,1)$, $m\in \mb S^2$ is the unique Fréchet mean of $\ms{CntrUnif}(m, a)$. Furthermore, \textsc{VarIneq} is fulfilled with $C_{\ms{Vlo}} = (1-a)^{-1}$.

Lastly, we calculate the variance of the contracted uniform distribution.
Let $m \in \mb S^2$, $a\in[0,1]$, and $Y \sim \ms{CntrUnif}(m, a)$.
Let $Z_a$ and $\Theta$ as in \autoref{def:contrunif}. Then $\Ex{d(Y, m)^2} = \Ex{d(Z_a, e_3)^2}$ because of symmetry. The distance does only depend on $\Theta$ and is equal to $a\Theta$. Thus, $\Ex{d(Y, m)^2} = \Ex{(a\Theta)^2} = \frac12 a^2 \int_0^\pi x^2 \sin(x) \dl x = \frac12(\pi^2-4)a^2$.
\subsection{Setup and Illustration}
Let $t\mapsto m_t$ be one of the two curves named \textit{simple} and \textit{spiral}, which are described below. We set $x_i = \frac{i-1}{n-1}$ and sample independent $y_i \sim \ms{CntrUnif}(m_{x_i}, a)$ to obtain our data $(x_i, y_i)_{i=1,\dots,n}$. The parameter $a\in[0,1]$ is chosen so that the distribution has a given standard deviation $\mathtt{sd}$. Then we calculate the four different nonparametric regression estimators \texttt{LocFre}, \texttt{OrtFre}, \texttt{LocGeo}, and \texttt{OrtGeo}.

We first show some illustrating plots \autoref{fig:nonparam:closed} and \autoref{fig:nonparam:open}. In these, we want to depict functions of the form $[0,1] \to [0,\pi]\times[0,2\pi),\, t\mapsto (\vartheta_{m_t}, \varphi_{m_t})$. The graph of such a function is 3-dimensional and hard to understand on 2D-paper. Creating two plots, one for $[0,1] \to [0,\pi],\, t\mapsto \vartheta_{m_t}$ and another for $[0,1] \to [0,2\pi),\, t\mapsto \varphi_{m_t}$, is also difficult to interpret, as one has to always take both graphs into account at the same time. Instead we show the image of the functions $\{(\vartheta_{m_t}, \varphi_{m_t}) \colon t\in[0,1]\} \subset [0,\pi]\times[0,2\pi)$.

The rectangle of the two angles $(\vartheta, \varphi) \in [0,\pi]\times[0,2\pi)$ parameterizing the sphere is the \textit{Mercator projection}. This projection (as any projection of the sphere to the euclidean plane) distorts the surface of the sphere. This is made visible by the thin gray lines in the plots, which are geodesics and replace the usual grid lines. The plots show the image of $t\mapsto m_t$ (black line) and the different estimators $t\mapsto \hat m_t$ (colored lines). The covariate $t$ is not shown directly. But the positions $t = 0.25, 0.5, 0.75$ are marked on each curve by a square, a rhombus, and a triangle, respectively. Note that distances are distorted: Distances close to the equator ($\vartheta=\frac12\pi$) are larger than they appear and smaller at the poles ($\vartheta\in\{0, \pi\}$). The observations $y_i$ (black dots in the top plots) are connected via thin black lines to $m_{x_i}$.

We test two different regression functions $t\mapsto m_t$. The first one, named \textit{simple} has angles $t\mapsto (\frac14\pi, \frac12 + 2\pi t)$, see \autoref{fig:nonparam:closed}. This seems to be a straight line in the Mercator projection but is a curved function on the sphere and cannot be approximated well by a single geodesic. This \textit{simple} curve is periodic.
Moreover, it can be written as $t \mapsto \Exp(p, \sin(2\pi t) v_1 + \cos(2\pi t) v_2)$ with the appropriated choices of $p \in \mb S^2, v_1, v_2 \in \ms T_p \mb S^2$. Thus, this curve lies in the model space of \texttt{OrtGeo} if $N\geq2$. Recall that we fixed $N=3$.
The second curve is described by $t\mapsto (\frac18\pi + \frac34\pi t, \frac12 + 3\pi t)$. Again this curve is not geodesic. It \textit{spirals} around the sphere, see \autoref{fig:nonparam:open}, and is not periodic.
To estimate nonperiodic functions with \texttt{OrtGeo} and \texttt{OrtFre}, which require periodicity, we copy the data and append it in reverse order to estimate the periodic function 
\begin{align}
t\mapsto 
\begin{cases}
	m_{2t}& \text{if}\  t <\frac12\eqcm\\
	m_{2-2t}& \text{if}\  t \geq\frac12
	\eqfs
\end{cases}
\end{align}
This may lead to boundary effects.

\begin{figure}
\includegraphics[width=0.49\textwidth]{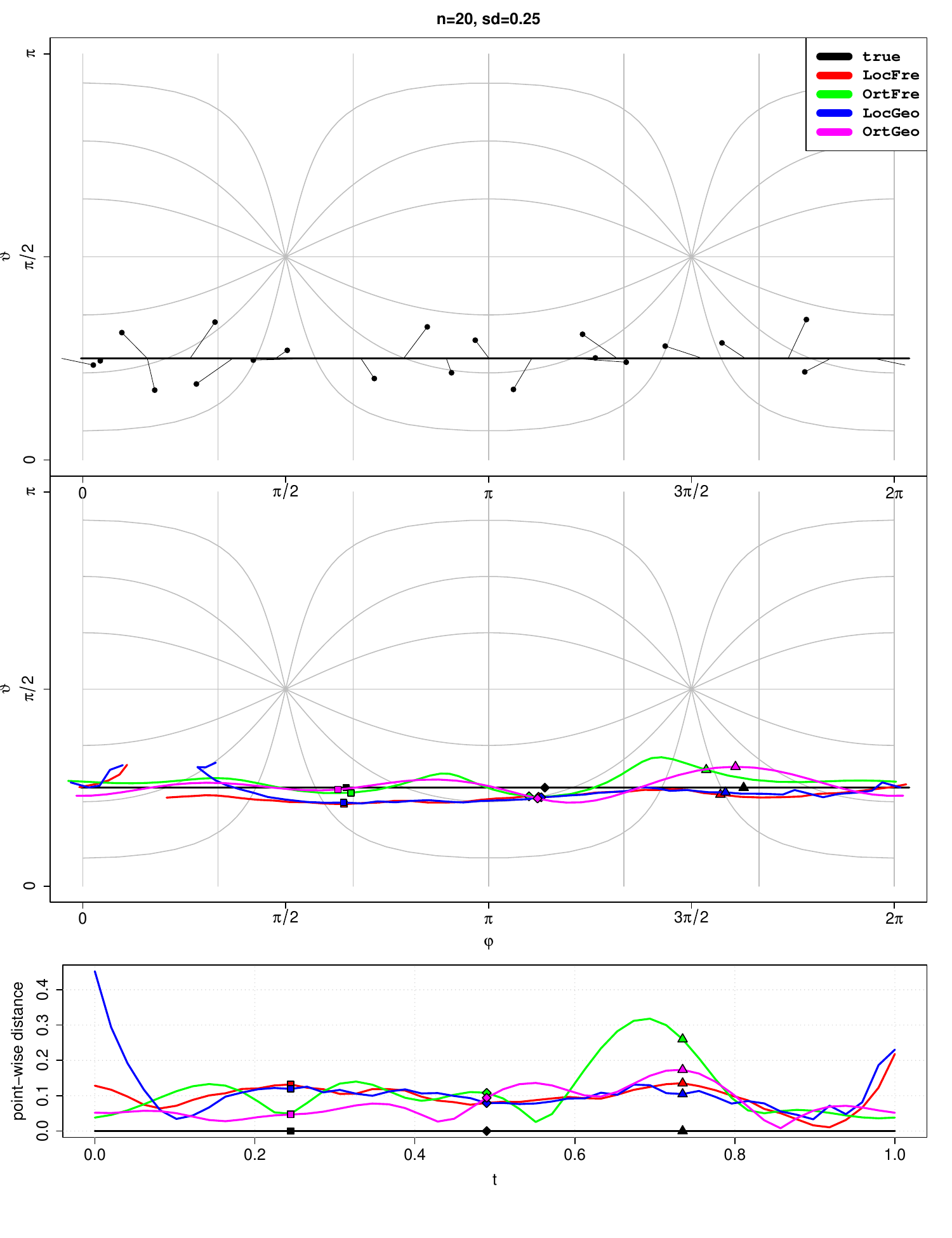}
\includegraphics[width=0.49\textwidth]{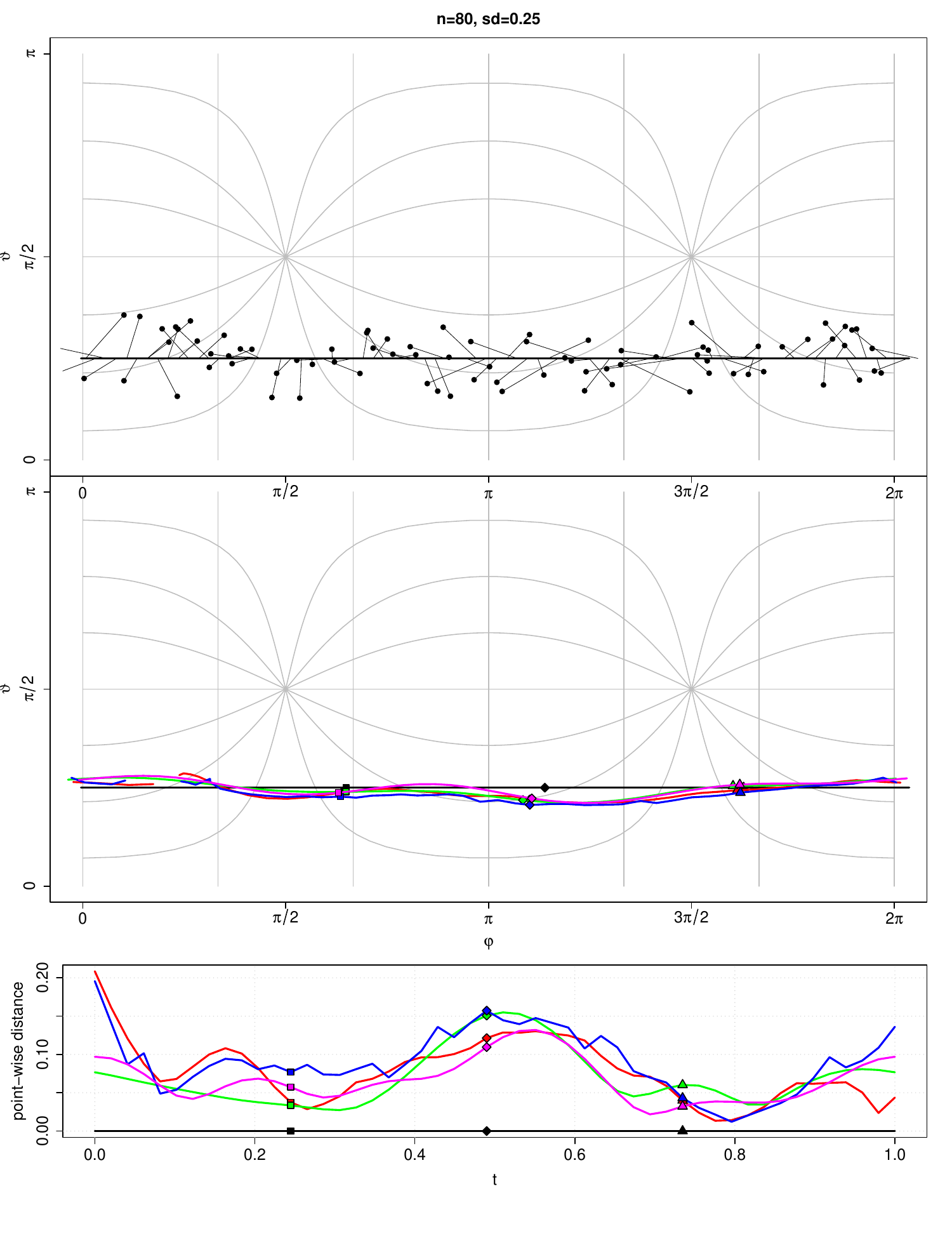}
\includegraphics[width=0.49\textwidth]{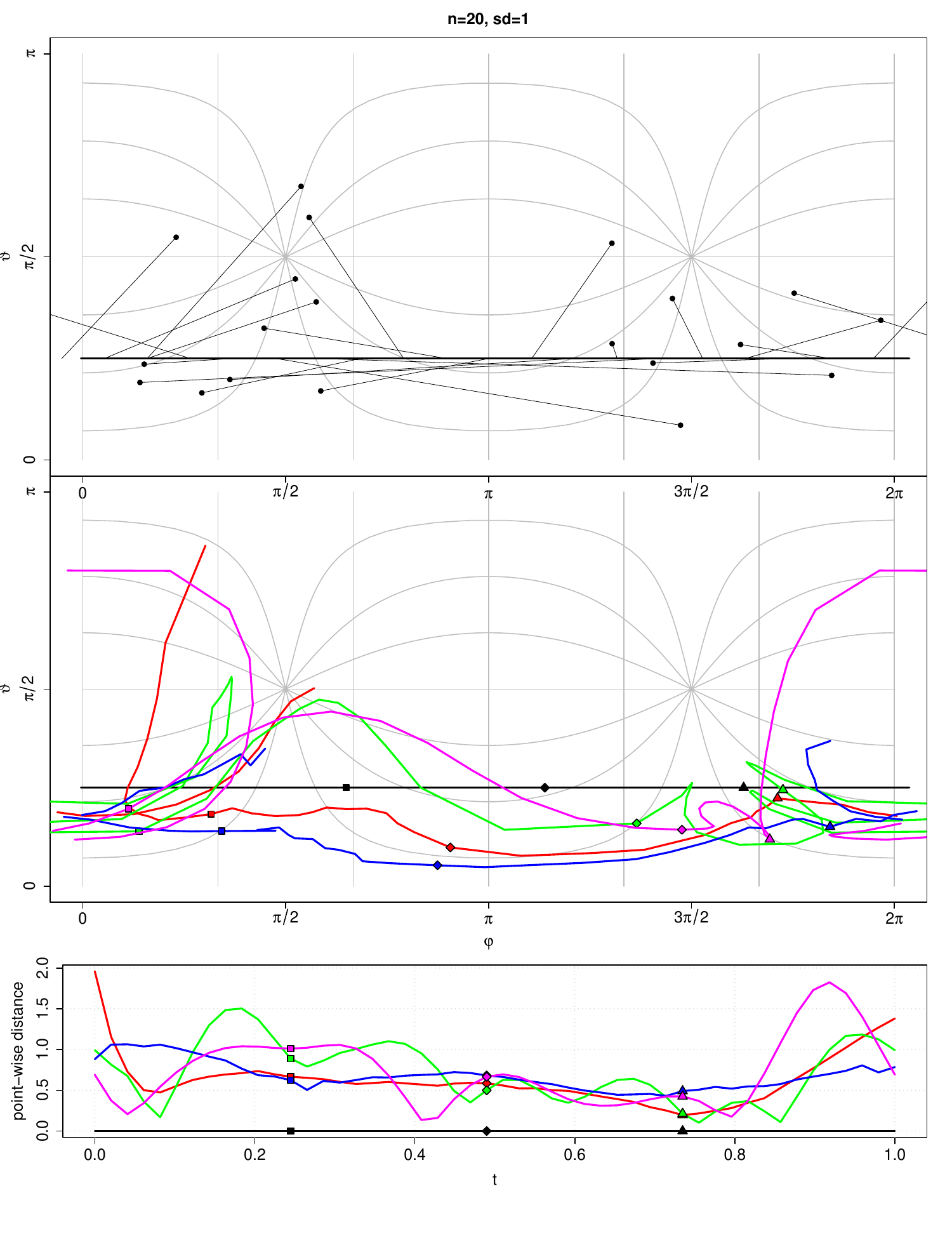}
\includegraphics[width=0.49\textwidth]{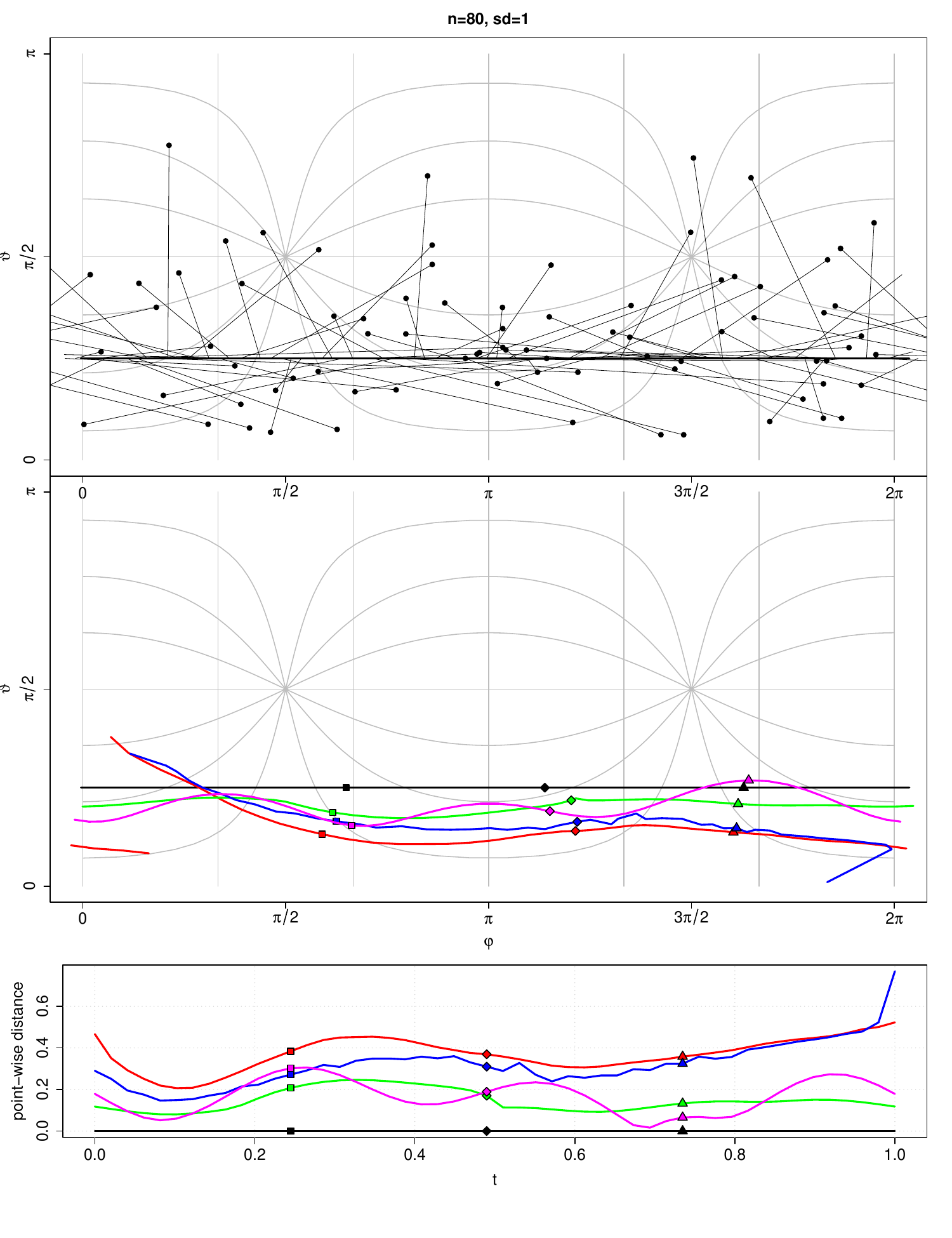}
\caption{For the \textit{simple} curve, we sample $n \in \{20, 80\}$ observations with contracted uniform noise of standard deviation $\mathtt{sd} \in \{\frac14, 1\}$ (top plot of each quadrant). Then we apply \texttt{LocGeo}, \texttt{LocFre}, \texttt{OrtGeo}, \texttt{OrtFre} (middle part of each quadrant). The distance of the estimated curve to the true one at each point in time is shown in the plots at the bottom of each quadrant.}
\label{fig:nonparam:closed}
\end{figure}
\begin{figure}
\includegraphics[width=0.49\textwidth]{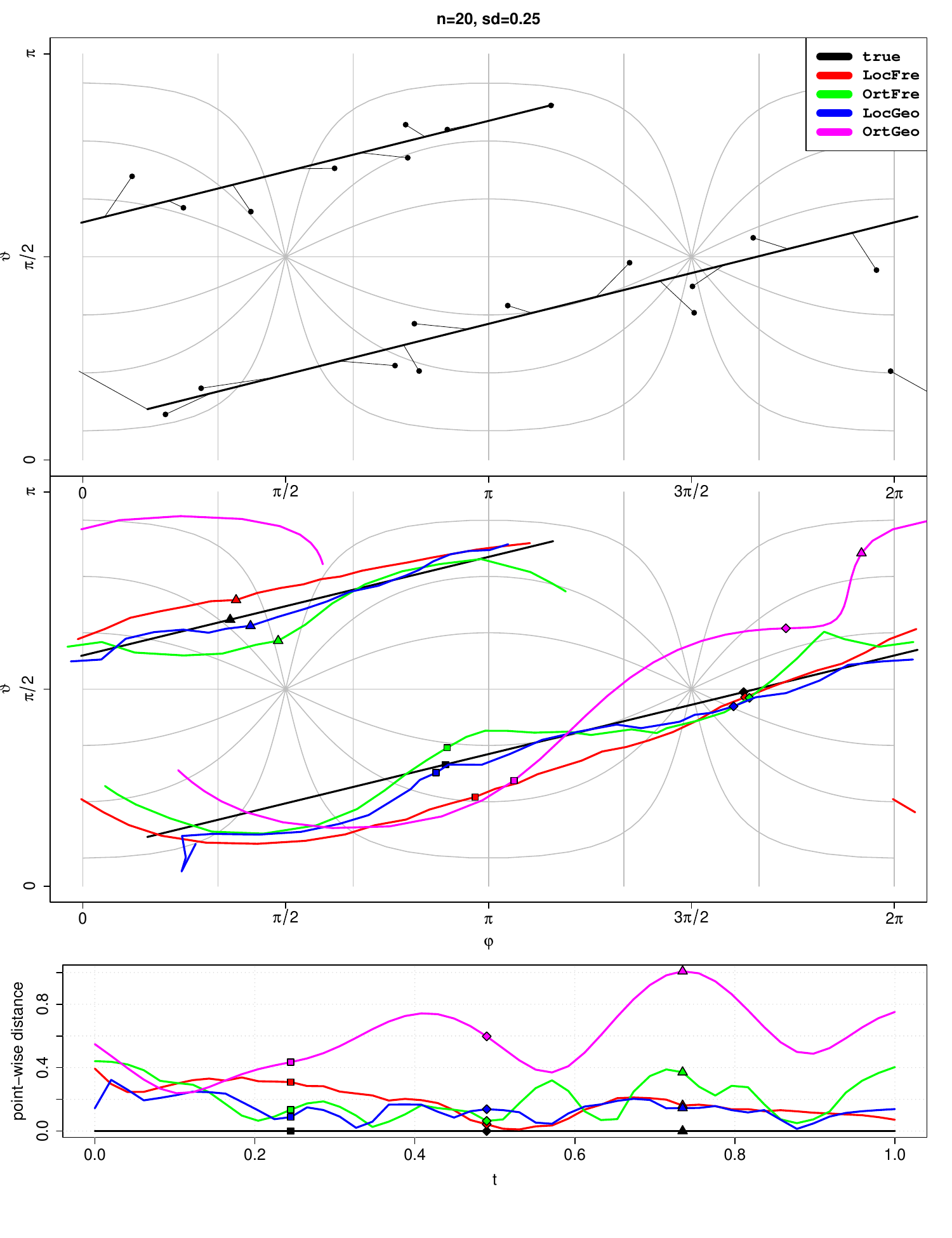}
\includegraphics[width=0.49\textwidth]{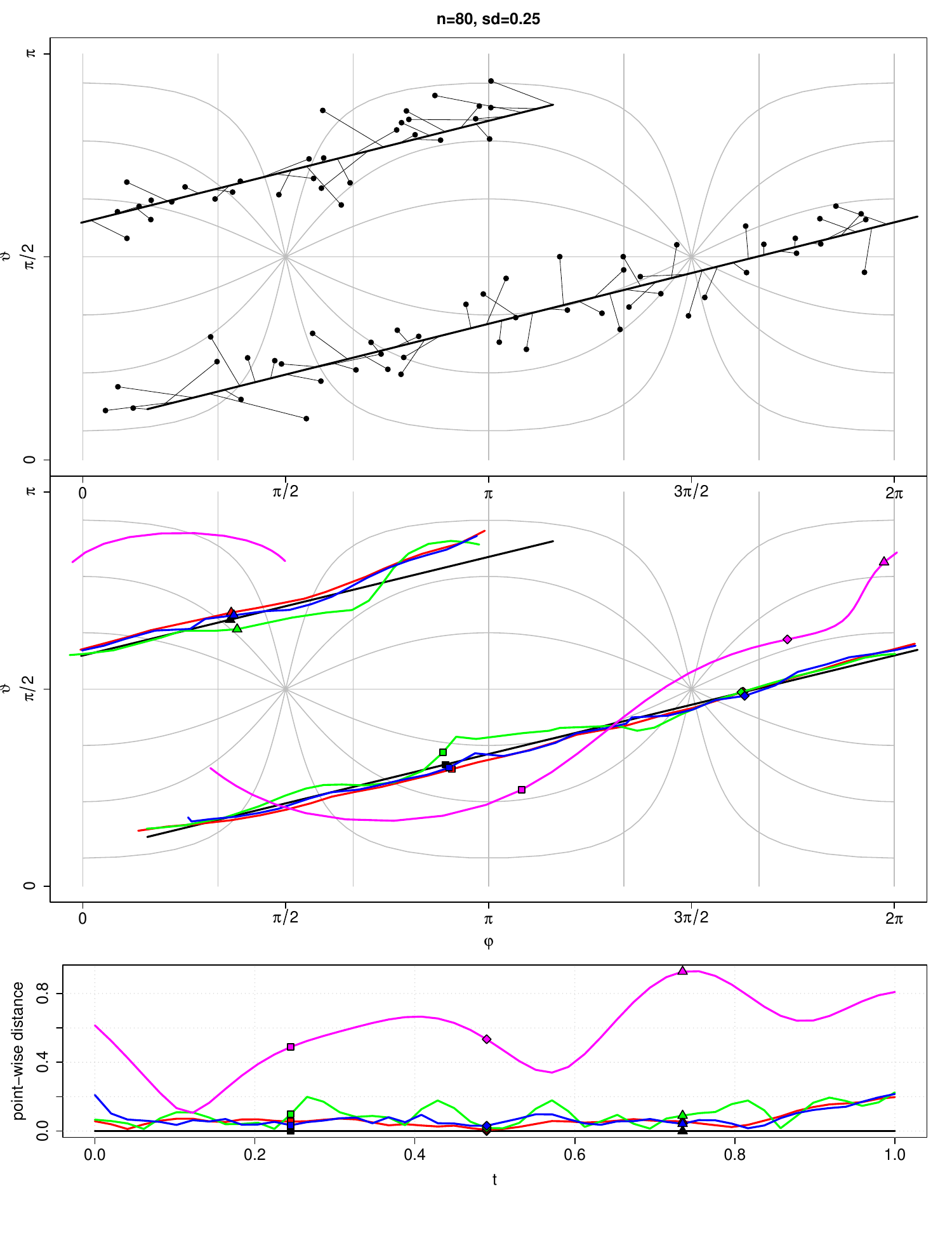}
\includegraphics[width=0.49\textwidth]{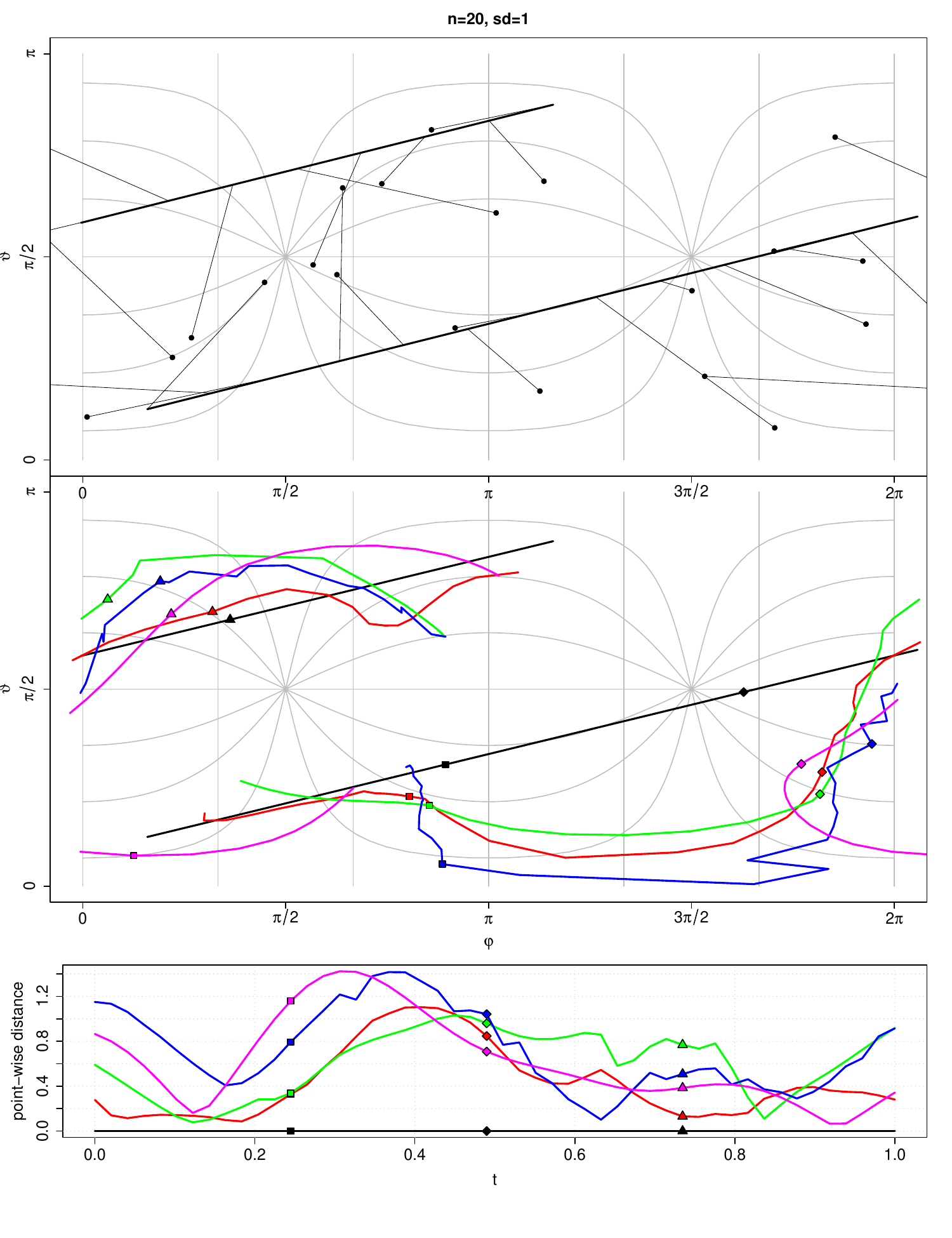}
\includegraphics[width=0.49\textwidth]{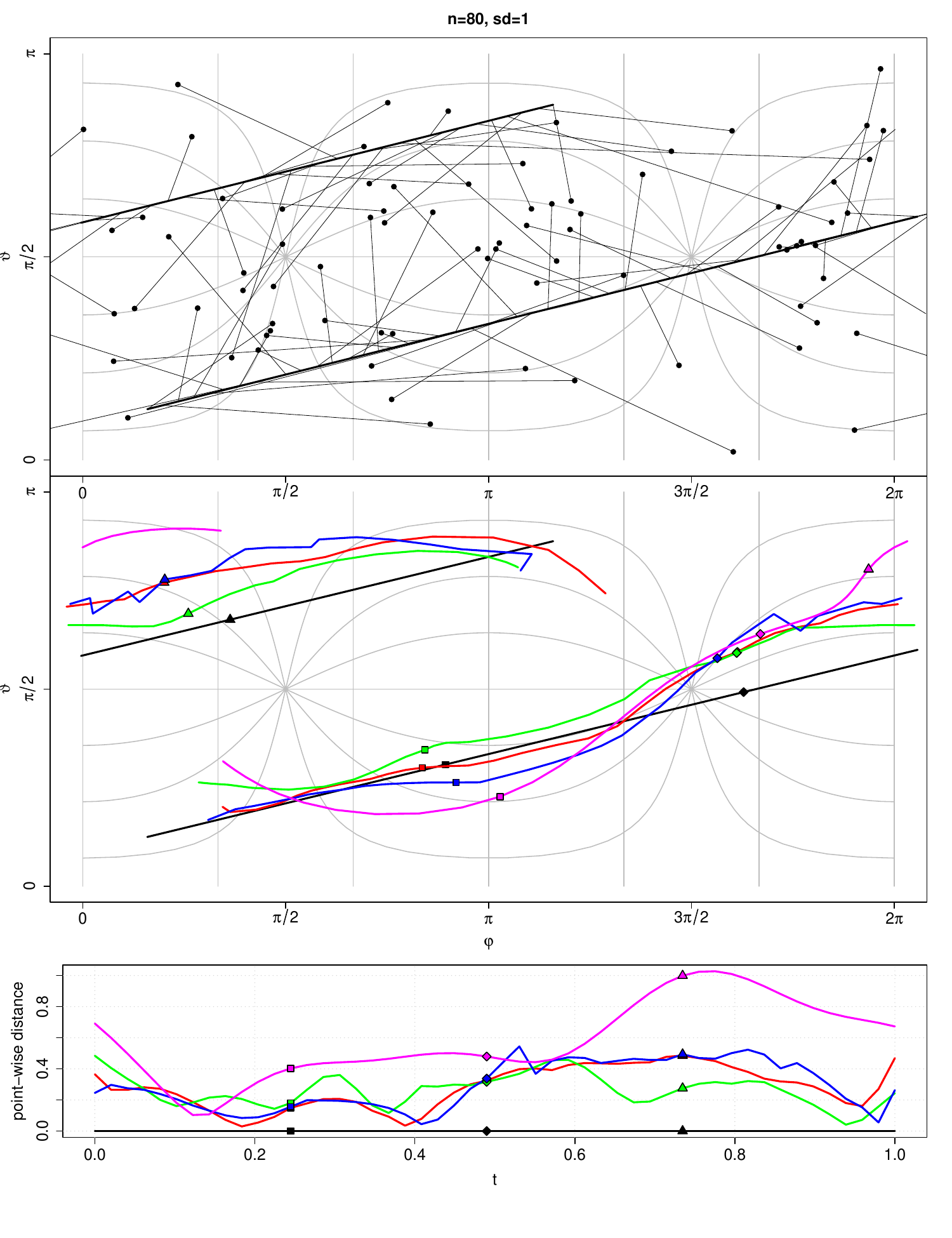}
\caption{For the \textit{spiral}, we sample $n \in \{20, 80\}$ observations with contracted uniform noise of standard deviation $\mathtt{sd} \in \{\frac14, 1\}$. Then we apply \texttt{LocGeo}, \texttt{LocFre}, \texttt{OrtGeo}, \texttt{OrtFre}. The distance of the estimated curve to the true one at each point in time is shown in the plots at the bottom of each quadrant.}
\label{fig:nonparam:open}
\end{figure}

Roughly speaking and judging only from \autoref{fig:nonparam:closed} and \autoref{fig:nonparam:open}, all estimators seem to perform similarly, except for a worse outcome for \texttt{OrtGeo} on the \textit{spiral}. In the setting $(n=20, \mathtt{sd} = 1)$ the estimators are not able to come close to the true curves. 
\subsection{Results}
We approximate the MISE values in different settings with the \textit{simple} and the \textit{spiral} curve. To this end, the simulations are repeated 500 times and the integrated squared errors of these repetitions are averaged. The results are presented in \autoref{tbl:mise:nonparam}.
\begin{table}[h!]
\centering
\begin{tabular}[t]{r|r|l|>{}r|>{}r|>{}r|>{}r}
\hline
\multicolumn{3}{c|}{\textbf{Setting}} & \multicolumn{4}{c}{\textbf{MISE}} \\
\cline{1-3} \cline{4-7}
$n$ & $\mathtt{sd}$ & \textbf{curve} & \texttt{LocFre} & \texttt{OrtFre} & \texttt{LocGeo} & \texttt{OrtGeo}\\
\hline
20 & 0.25 & simple & \cellcolor[HTML]{ABFF80}{0.02070} & \cellcolor[HTML]{BBFF80}{0.02410} & \cellcolor[HTML]{C5FF80}{0.02595} & \cellcolor[HTML]{80FF80}{0.01397}\\
\hline
80 & 0.25 & simple & \cellcolor[HTML]{CCFF80}{0.00731} & \cellcolor[HTML]{C2FF80}{0.00662} & \cellcolor[HTML]{DEFF80}{0.00851} & \cellcolor[HTML]{80FF80}{0.00361}\\
\hline
20 & 1.00 & simple & \cellcolor[HTML]{80FF80}{0.34890} & \cellcolor[HTML]{8CFF80}{0.39052} & \cellcolor[HTML]{85FF80}{0.36356} & \cellcolor[HTML]{E4FF80}{0.86604}\\
\hline
80 & 1.00 & simple & \cellcolor[HTML]{9CFF80}{0.12056} & \cellcolor[HTML]{82FF80}{0.09350} & \cellcolor[HTML]{94FF80}{0.11026} & \cellcolor[HTML]{80FF80}{0.09228}\\
\hline
20 & 0.25 & spiral & \cellcolor[HTML]{80FF80}{0.02899} & \cellcolor[HTML]{CFFF80}{0.05902} & \cellcolor[HTML]{8CFF80}{0.03268} & \cellcolor[HTML]{FF8080}{0.38623}\\
\hline
80 & 0.25 & spiral & \cellcolor[HTML]{80FF80}{0.00900} & \cellcolor[HTML]{BBFF80}{0.01534} & \cellcolor[HTML]{8CFF80}{0.01008} & \cellcolor[HTML]{FF8080}{0.37191}\\
\hline
20 & 1.00 & spiral & \cellcolor[HTML]{87FF80}{0.56768} & \cellcolor[HTML]{80FF80}{0.52354} & \cellcolor[HTML]{85FF80}{0.54786} & \cellcolor[HTML]{BDFF80}{0.91824}\\
\hline
80 & 1.00 & spiral & \cellcolor[HTML]{85FF80}{0.15185} & \cellcolor[HTML]{80FF80}{0.14662} & \cellcolor[HTML]{80FF80}{0.14677} & \cellcolor[HTML]{FFFD80}{0.47189}\\
\hline
\end{tabular}
\includegraphics[width=0.99\textwidth]{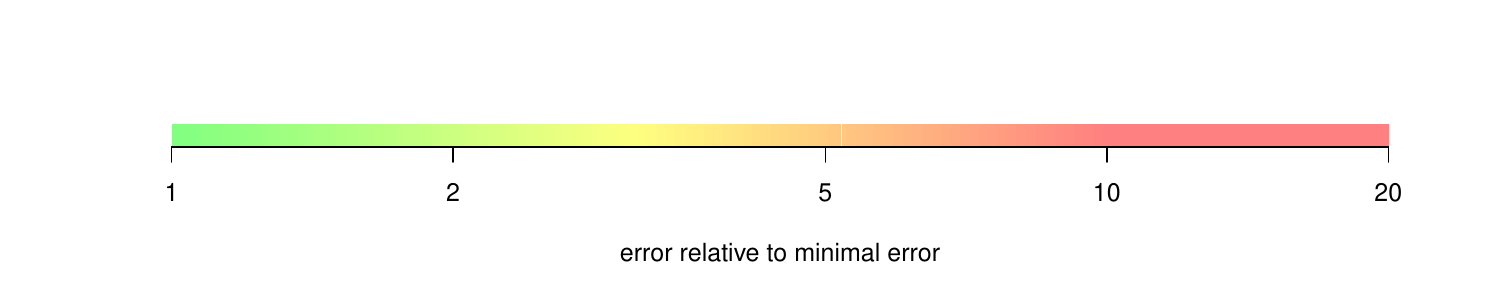}
\caption{Approximated MISE values for nonparametric regression methods. The colors give a visual indication of the MISE value of the given methods divided by the best MISE value in the row.}
\label{tbl:mise:nonparam}
\end{table}
The more reliable analysis of the approximated MISE-values confirms that all estimators behave similar, except \texttt{OrtGeo}, which has some bad outcomes. This may have several reasons: We were not able to show an error bound for this method and argued that it may be sub-optimal, i.e., it may be inherently worse than the other methods. Furthermore, we do not use cross-validation for \texttt{OrtGeo}, as we do for the other methods, but fix $N=3$. Thus, the comparison might be unfair, because the hyper-parameters are not tuned equally. Lastly, in \texttt{OrtGeo}, we have to numerically solve an 8-dimensional non-convex optimization problem (2 dimensions for each of $\hat p$, $\hat v_1$, $\hat v_2$, $\hat v_3$). There are 4 dimensions for \texttt{LocGeo} and 2 for the Fréchet methods, see \autoref{table:opi}. Our program might return values farther away from the optimum in those methods with higher dimensional optimization problems.
\begin{figure}
\includegraphics[width=0.49\textwidth]{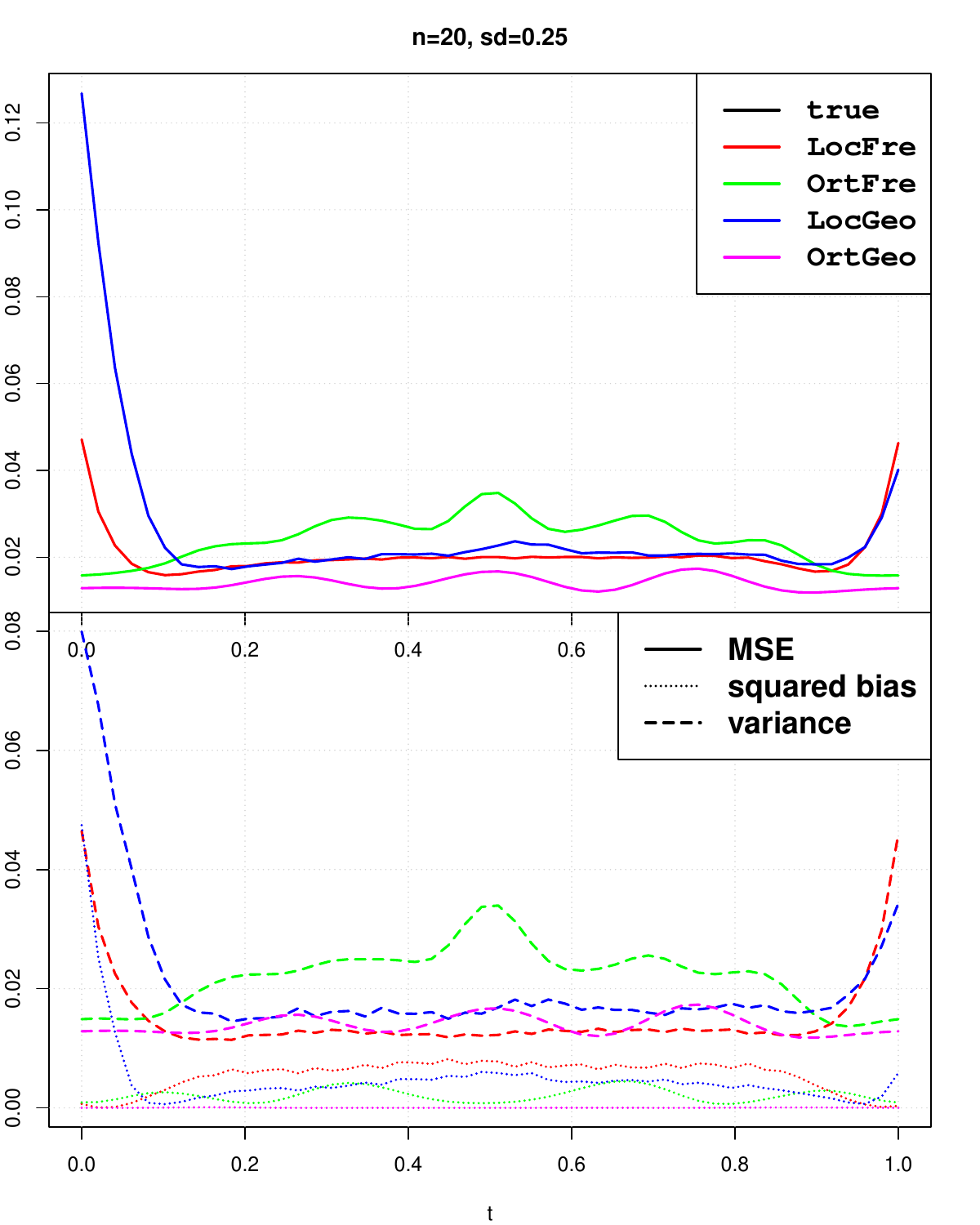}
\includegraphics[width=0.49\textwidth]{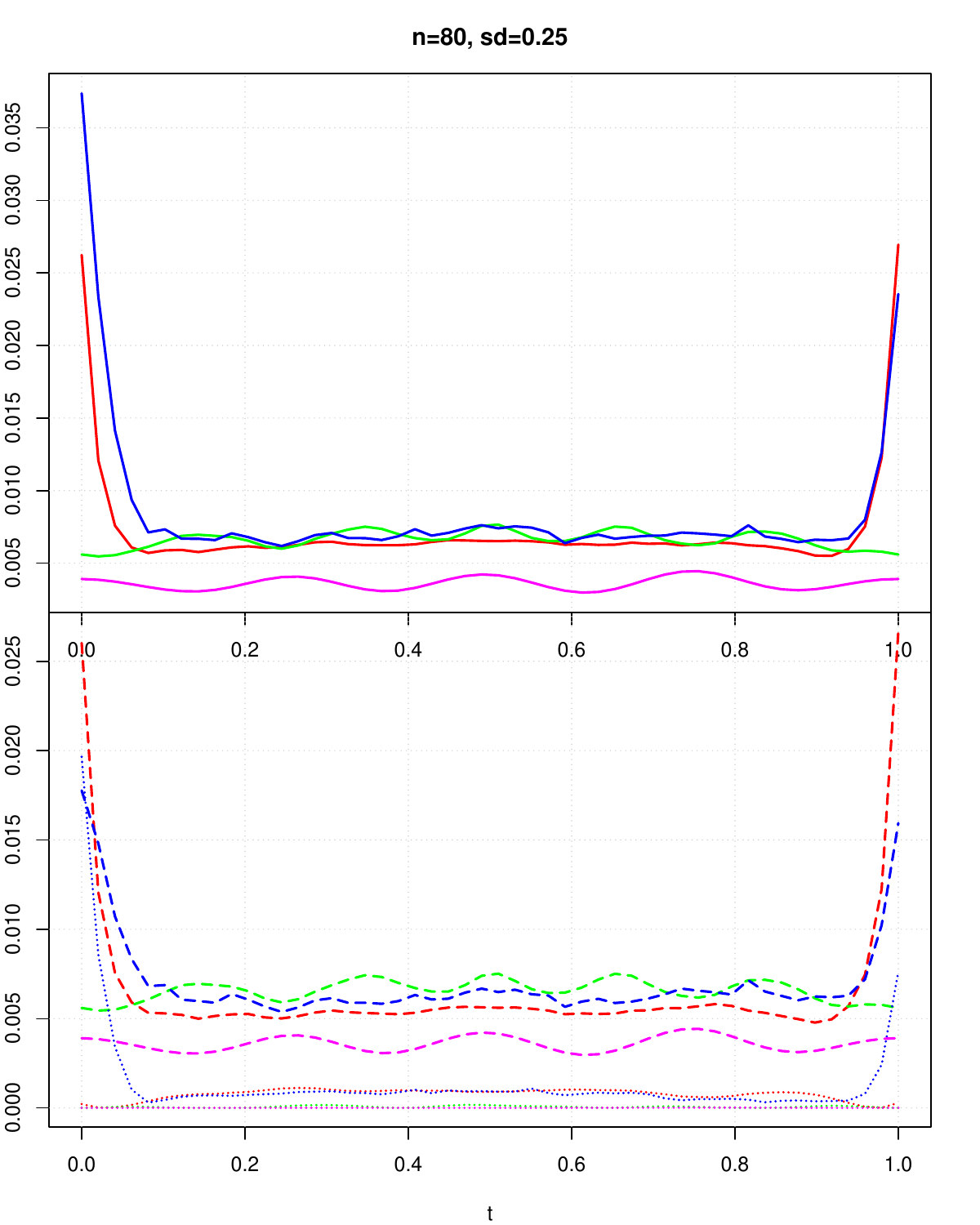}
\includegraphics[width=0.49\textwidth]{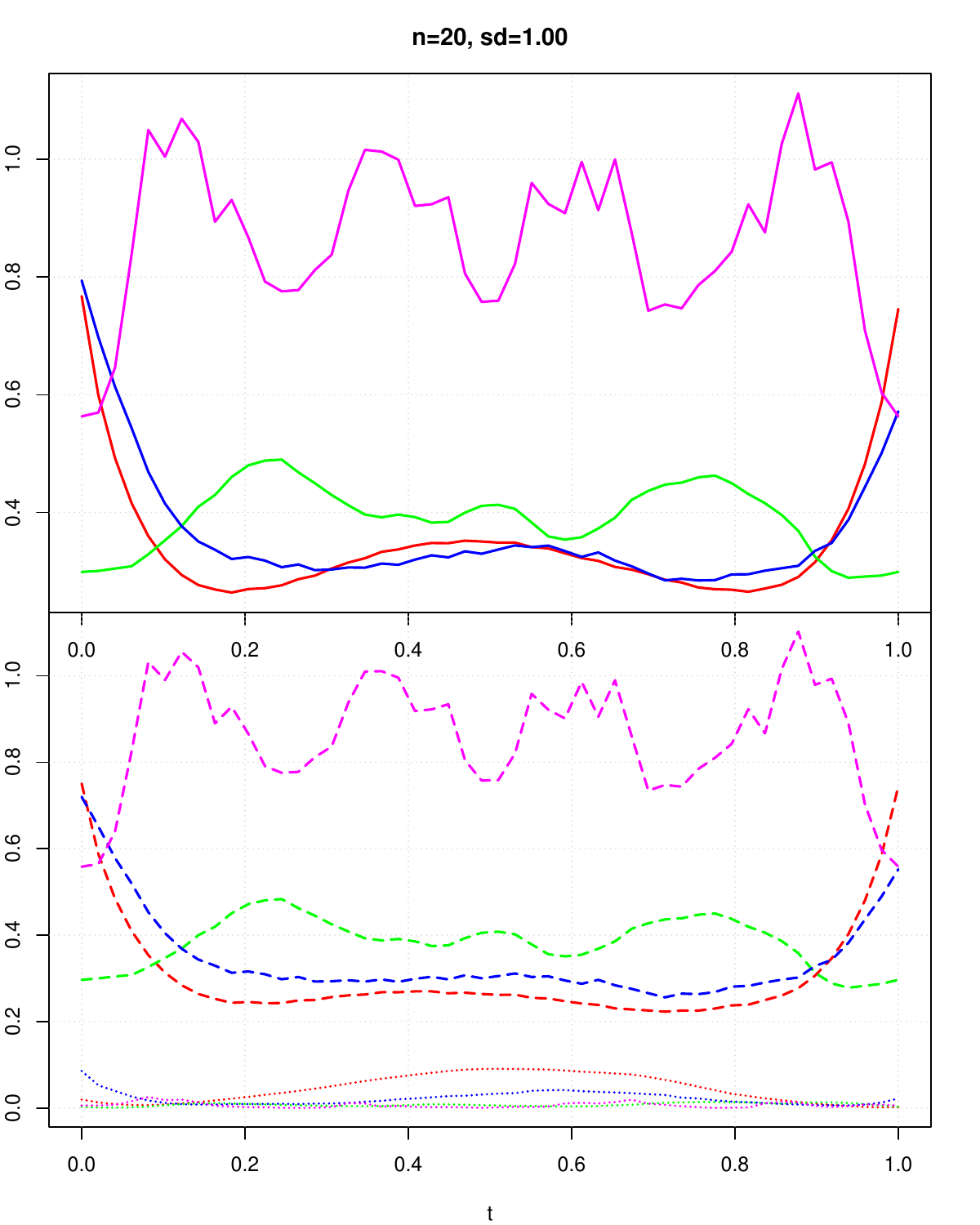}
\includegraphics[width=0.49\textwidth]{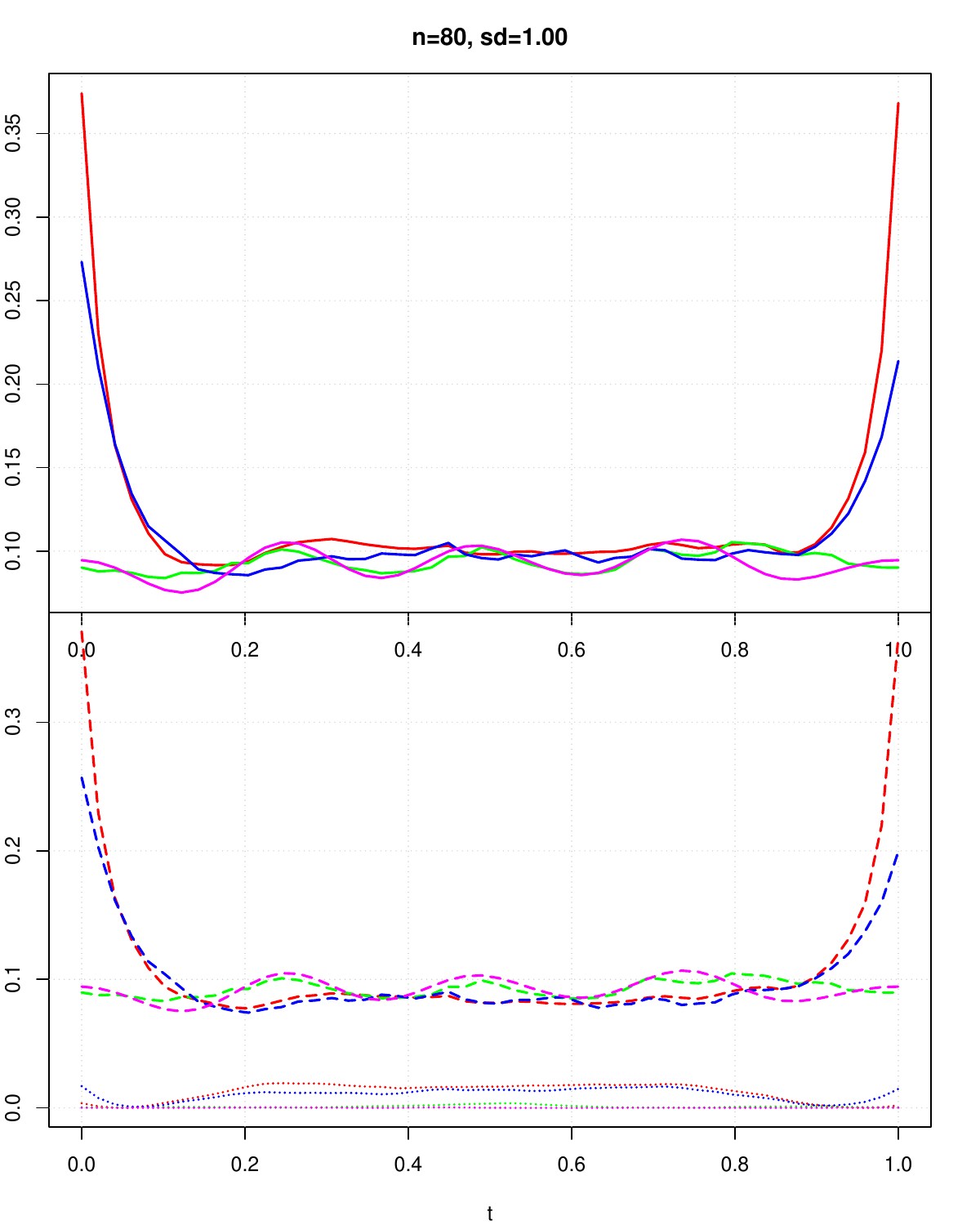}
\caption{Point-wise MSE, squared bias, and variance for the \emph{simple} curve.}
\label{fig:nonparam:mse:closed}
\end{figure}

\begin{figure}
\includegraphics[width=0.49\textwidth]{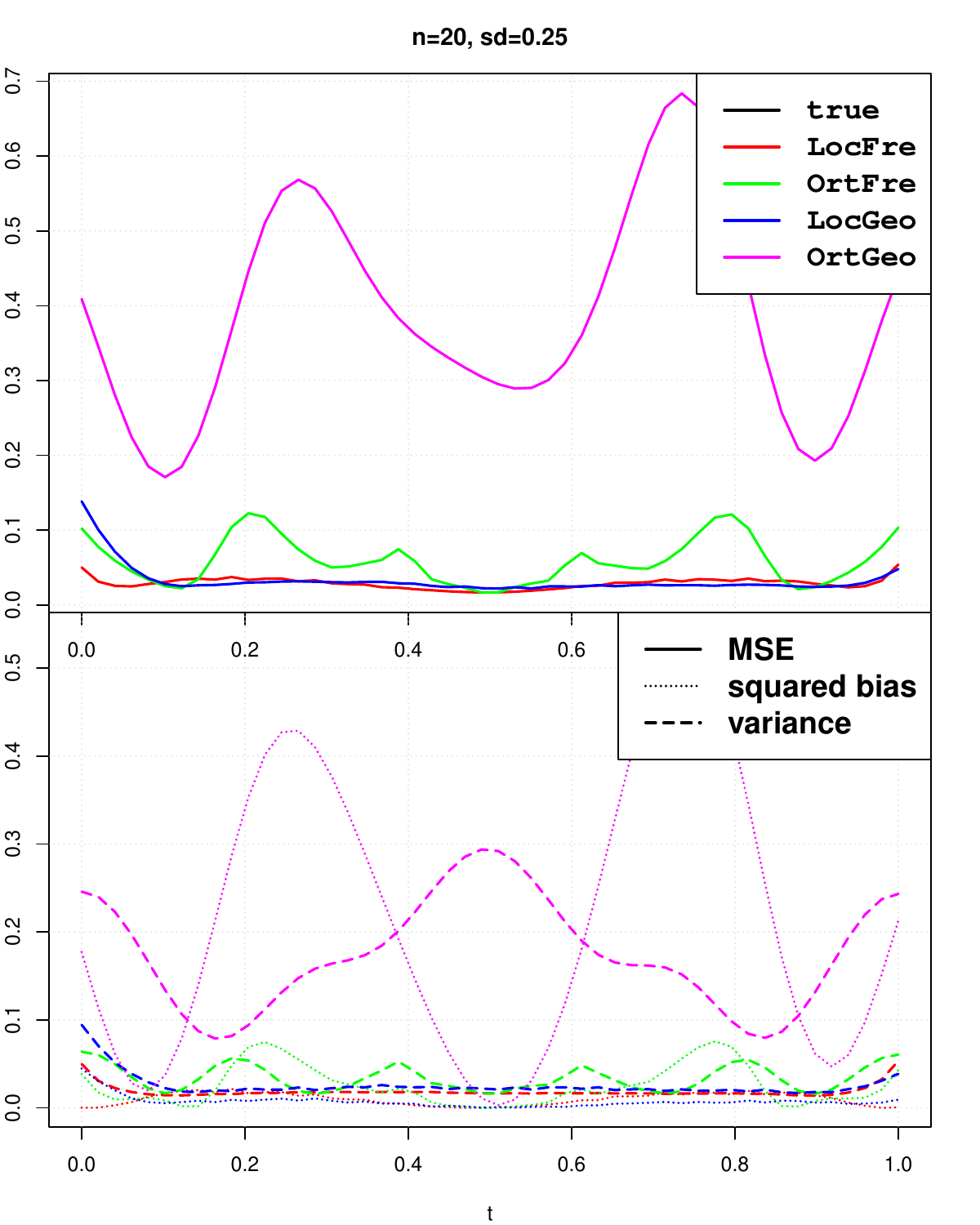}
\includegraphics[width=0.49\textwidth]{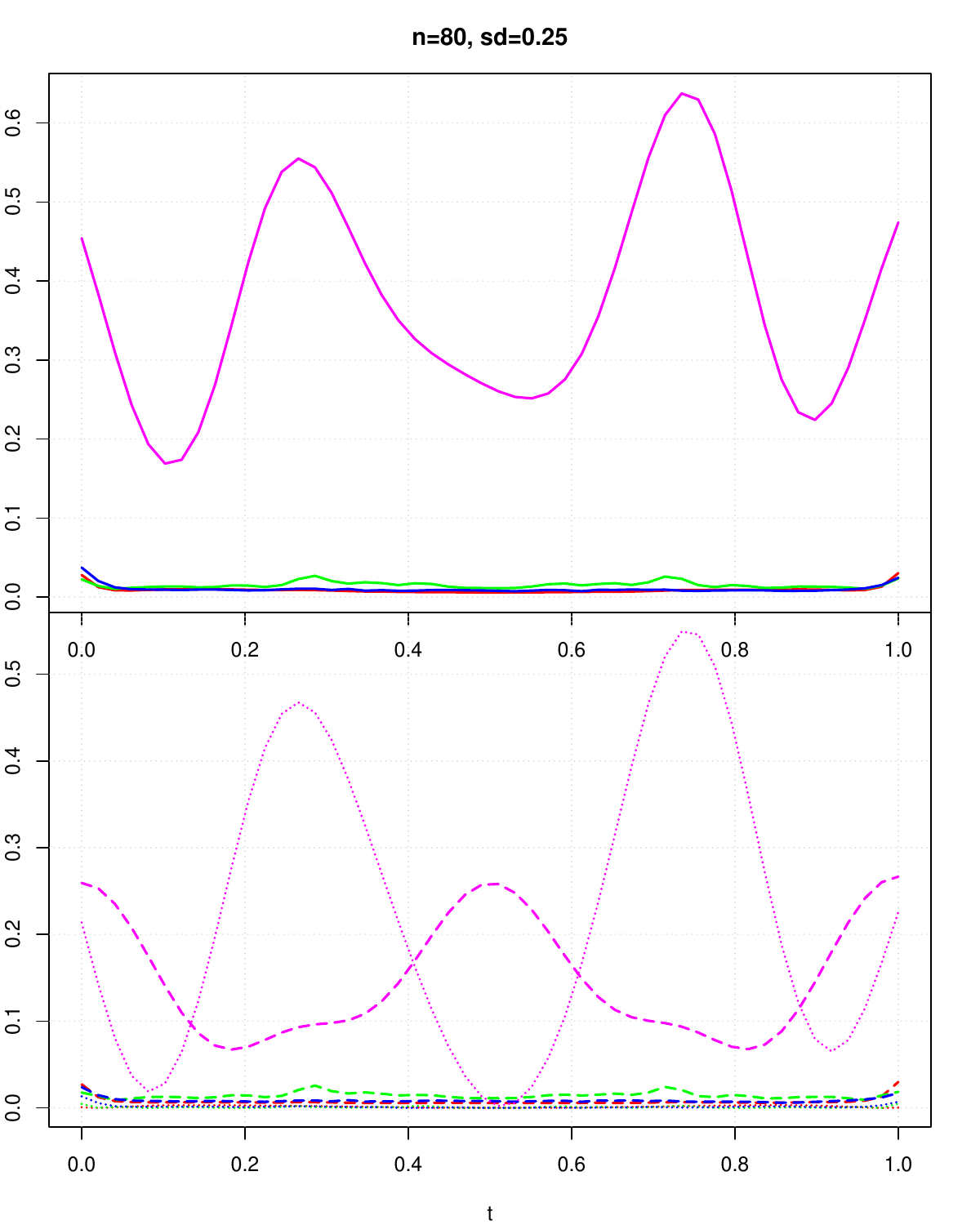}
\includegraphics[width=0.49\textwidth]{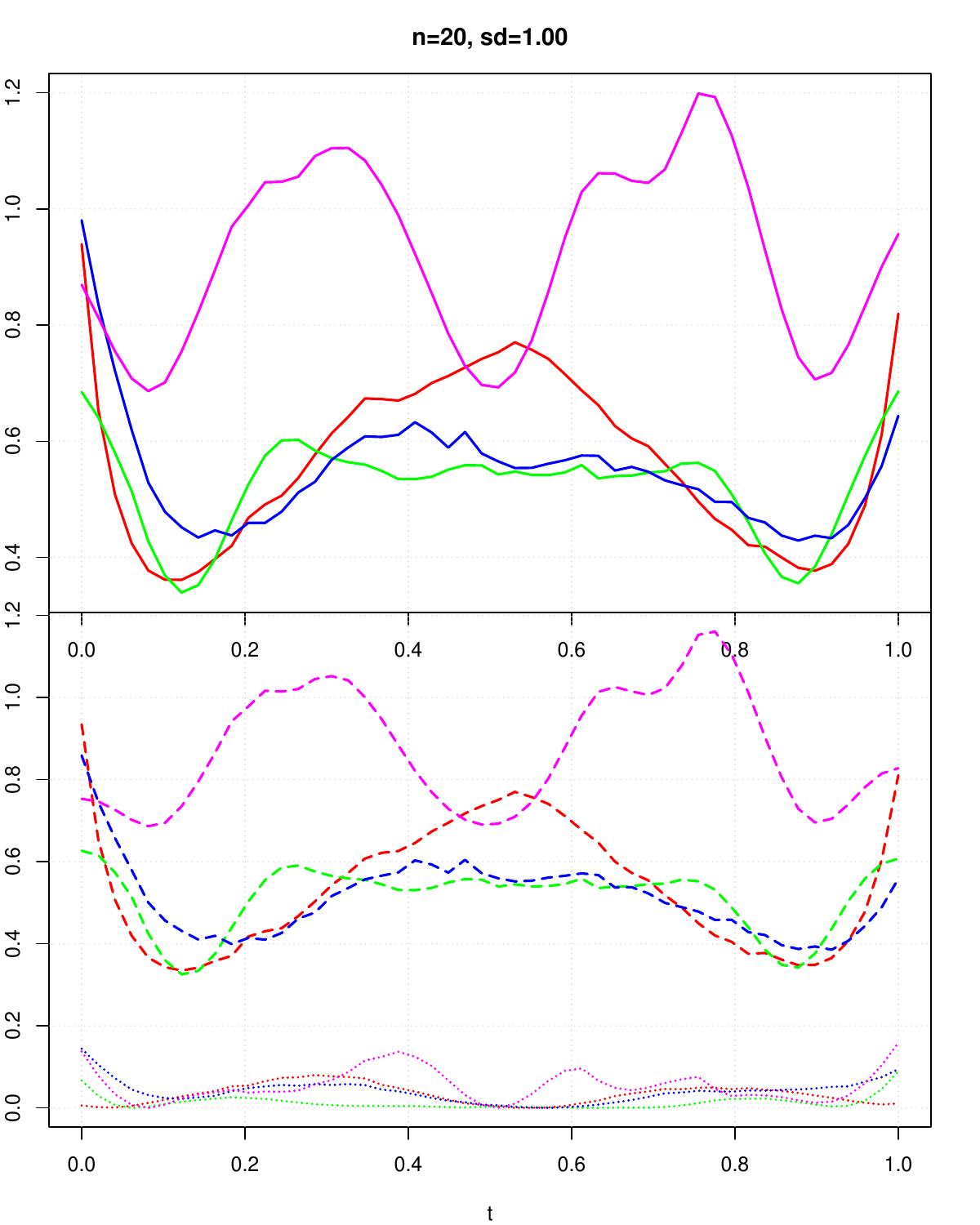}
\includegraphics[width=0.49\textwidth]{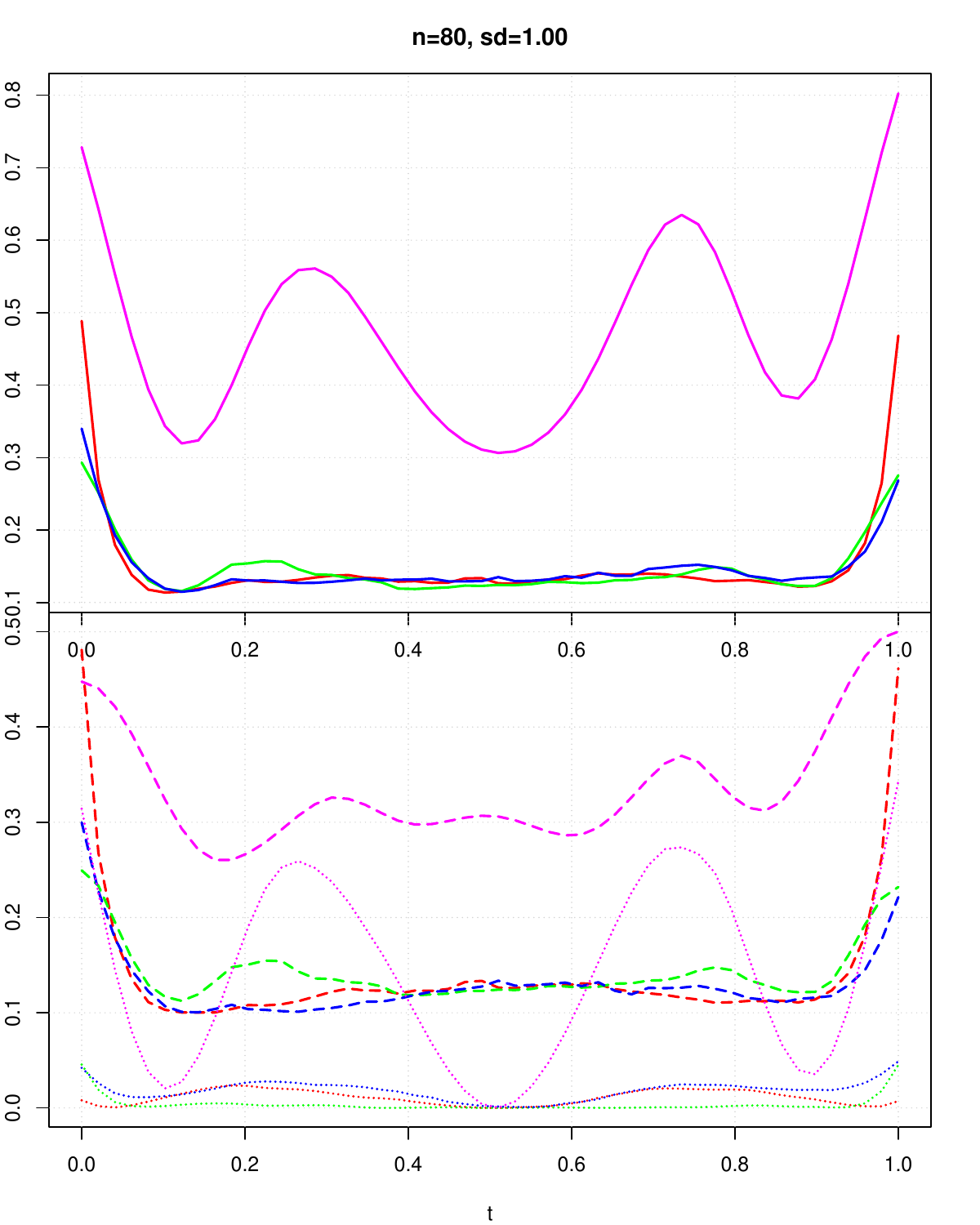}
\caption{Point-wise MSE, squared bias, and variance for the \emph{spiral}.}
\label{fig:nonparam:mse:open}
\end{figure}
\autoref{fig:nonparam:mse:closed} and \autoref{fig:nonparam:mse:open} show the approximated point-wise mean squared error in the upper part of each plot. In the lower part, a point-wise decomposition into a squared bias and a variance term is shown. This decomposition is not straight forward in curved spaces: We calculate the Fréchet mean $\bar m_t$ of our repetitions $(\hat m_t^{j})_{j=1,\dots,500}$. The dotted line in each plot is $t\mapsto d(\bar m_t, m_t)^2 =: \ms{Bias}^2_t$. The dashed line is $\frac1{500} \sum_{j=1}^{500} d(\bar m_t, \hat m_t^{j})^2 =: \ms{Var}_t$. But, in nonstandard spaces, there is no guarantee that $\frac1{500} \sum_{j=1}^{500} d(m_t, \hat m_t^{j})^2 =: \ms{MSE}_t = \ms{Bias}^2_t + \ms{Var}_t$. Still this decomposition is valuable. It shows that \texttt{OrtGeo} is an unbiased estimator of the \emph{simple} curve, which is not surprising as \texttt{OrtGeo} with $N=3$ is a parametric estimator and the \emph{simple} curve is in its model space. On the \textit{spiral} the estimators suffer from boundary effects. On the \textit{simple} curve this only affects the local estimators as this curve is periodic and does not have a boundary for the trigonometric estimators.%
\begin{appendix}
\section{Proofs}\label{sec:proofs}
Recall the general metric space model.
Let $(\mc Q, d)$ be a metric space. For $t \in [0, 1]$, let $Y_t$ be a $\mc Q$-valued random variable with finite second moment, i.e., $\Ex{d(Y_t, q)^2} < \infty$ for all $t\in[0,1]$ and $q\in\mc Q$. Let the regression function $m \colon [0,1] \to \mc Q$ be a minimizer $m_t \in \argmin_{q\in \mc Q} \Ex{d(Y_t, q)^2}$. 
We consider nonparametric estimators which have access to following data: 
Let $x_i = \frac in$ and let $(y_i)_{i=1,\dots, n}$ be independent random variables with values in $\mc Q$ such that $y_i$ has the same distribution as $Y_{x_i}$. 

We introduce some further notation.
Define 
\begin{align*}
	\ol{q}{p} &:= d(q,p)\eqcm\\
	\lozenge(y,z,q,p) &:= d(y, q)^2 - d(y, p)^2 - d(z, q)^2 + d(z, p)^2\eqcm\\
	\mf a(y, z) &:= \sup_{q,p\in\mc Q, q\neq p} \frac{\lozenge(y,z,q,p)}{d(q, p)}
	\eqfs
\end{align*}
\subsection{\texttt{LocFre}}
\subsubsection{A General Result}
To prove the theorems from section \ref{sec:locfre} concerning the \texttt{LocFre} estimator, we show a more general results first.

For $a > 0$, define $\lfloor a\rfloor$ as the largest integer strictly smaller than $a$.
The Hölder class $\Sigma(\beta, L)$ for $\beta, L > 0$ is defined as the set of $\lfloor \beta\rfloor$-times continuously differentiable functions $f\colon [0,1]\to\R$ with $\abs{f^{(\lfloor \beta\rfloor)}(t)-f^{(\lfloor \beta\rfloor)}(x)} \leq L\abs{x-t}^{\beta-\lfloor \beta\rfloor}$ for all $x,t\in [0,1]$.
\begin{assumptions}\label{ass:locfre:smooth_density}\mbox{ }
\begin{itemize}
\item 
	\textsc{VarIneq}: There is $C_{\ms{Vlo}}\in[1,\infty)$ such that $C_{\ms{Vlo}}^{-1}\,\ol{q}{m_t}^2 \leq \Ex{d(Y_t, q)^2 - d(Y_t, m_t)^2}$ for all $q\in\mc Q$ and $t\in[0,1]$.
\item
	\textsc{Entropy}: 
	There are $C_{\ms{Ent}} \in [1,\infty)$ and $\alpha \in [1,2)$ such that 
	\begin{equation*}
		\gamma_2(\mc B, d) \leq C_{\ms{Ent}}\max(\diam(\mc B, d), \diam(\mc B, d)^\alpha)
	\end{equation*} 
	for all $\mc B \subset \mc Q$, where $\gamma_2$ is the measure of entropy defined \autoref{def:entropy}.
\item 
	\textsc{Moment}:
	There are $\kappa > \frac{2}{2-\alpha}$ and $C_{\ms{Mom}} \in [1,\infty)$ such that $\Ex{d(Y_t, m_t)^\kappa}^{\frac1\kappa} \leq C_{\ms{Mom}}$ for all $t \in [0, 1]$.
\item \textsc{Kernel}:
	There are $C_{\ms{Kmi}}, C_{\ms{Kma}} \in [1,\infty)$ such that 
	\begin{equation*}
		C_{\ms{Kmi}}^{-1} \ind_{[-\frac12,\frac12]}(x) \leq K(x) \leq C_{\ms{Kma}} \ind_{[-1,1]}(x)
	\end{equation*}
	for all $x\in\R$.
\item \textsc{HölderSmoothDensity}:
	The function $[0,1] \to \mc Q,\, t\mapsto m_t$ is continuous. Let $C_{\ms{Len}} \in [1,\infty)$ such that $\sup_{s,t\in[0,1]} d(m_s, m_t) \leq C_{\ms{Len}}$. 
	Let $\mu$ be a probability measure on $\mc Q$. Let $C_{\ms{Int}} \in [1,\infty)$ such that $\int \ol{y}{m_0}^2 \mu (\dl y) \leq C_{\ms{Int}}$. 
	Let $y\to\rho(y|t)$ be the $\mu$-density of $Y_t$.
	Let $\beta>0$ with $\ell = \lfloor\beta\rfloor$. For $\mu$-almost all $y\in\mc Q$, there is $L(y)\geq0$ such that $t \mapsto  \rho(y|t) \in \Sigma(\beta, L(y))$. Furthermore, there is a constant $C_{\ms{SmD}} > 0$, $\int L(y)^2 \dl \mu(y) \leq C_{\ms{SmD}}^2$.
\item
	\textsc{BiasMoment}: Define $H(q,p) = (\int  \br{ \ol yq + \ol yp}^2 \mu (\dl y))^\frac12$. There is $C_{\ms{Bom}}\in[1,\infty)$ such that $\Ex{H(\hat m_t,m_t)^\kappa}^{\frac1\kappa} \leq C_{\ms{Bom}}$ for all $t\in[0,1]$.
\end{itemize}
\end{assumptions}
\begin{theorem}[\texttt{LocFre} General]\label{thm:locfre}
	Assume \textsc{HölderSmoothDensity}, \textsc{BiasMoment}, \textsc{Kernel}, \textsc{VarIneq}, \textsc{Entropy}, \textsc{Moment}.	Let $\ell = \lfloor\beta\rfloor$.
	Then, for $t\in[0,1]$, $n \geq c$, and $h \geq \frac cn$, the local polynomial Fréchet estimator $\hat m_{t}$ of order $\ell$ fulfills,
	\begin{equation*}
		\Ex*{\ol {m_{t}}{\hat m_{t}}^2} \leq 
		C_1 \br{h^{2\beta} + h^\frac{2\beta}{2-\alpha}} + C_2 \br{(nh)^{-1} + (nh)^{-\frac{1}{2-\alpha}}}
		\eqcm
	\end{equation*}
	where $C_1 = c_{\alpha,\kappa} \br{C_{\ms{Vlo}} C_{\ms{Kmi}}C_{\ms{Kma}} C_{\ms{SmD}} C_{\ms{Bom}}}^{\frac{2}{2-\alpha}}$ and
	$C_2 = c_{\alpha,\kappa}
	\br{C_{\ms{Vlo}} C_{\ms{Mom}} C_{\ms{Ent}} C_{\ms{Kmi}}^2C_{\ms{Kma}}^2}^{\frac{2}{2-\alpha}}$.
\end{theorem}
To prove \autoref{thm:locfre}, We first apply the variance inequality to relate a bound on the objective functions to a bound on the minimizers. The required uniform bound on the objective functions can be split into a bias and a variance part, which are bounded separately thereafter. Then, these results are put together in the application of a peeling device, which is used to bound the tail probabilities of the error. Integrating the tails leads to the required bounds in expectation.
\subsubsection{Proof of the General Result}
\medskip\noindent\textbf{Kernel.}
First we state some properties of the weights $w_{i,t}$ to be used later.
\begin{lemma}[{\cite[Proposition 1.12, Lemma 1.3, Lemma 1.5]{tsybakov08}}]\label{lmm:locfre:weights}
	Assume \textsc{Kernel}. Let $f \colon \R \to\R$ be a polynomial of degree $\leq \ell$. Then
	\begin{align*}
	&w_{i,t} = 0 \text{ if $\abs{x_i-t} > h$}\eqcm
	&&\sum_{i=1}^n w_{i,t} = 1\eqcm
	&& \sum_{i=1}^n f(x_i) w_{i,t} = f(t)\eqcm\\
	&\abs{w_{i,t}} \leq c\frac{C_{\ms{Kmi}}C_{\ms{Kma}}}{nh}\eqcm
	&&\sum_{i=1}^n |w_{i,t}| \leq cC_{\ms{Kmi}}C_{\ms{Kma}}\eqcm
	&&\sum_{i=1}^n w_{i,t}^2 \leq c\frac{C_{\ms{Kmi}}^2C_{\ms{Kma}}^2}{nh}
	\eqfs
	\end{align*}
	for all $t\in[0,1]$, $h\geq \frac cn$, $n \geq c$.
\end{lemma}
\begin{proof}
The first statement is due to the bounded support of the kernel. For the other statements in the first row, see \cite[Proposition 1.12]{tsybakov08}. The next two bounds follow from \cite[Lemma 1.3, Lemma 1.5]{tsybakov08}. The last bound is a consequence of the previous two.
\end{proof}

\medskip\noindent\textbf{Variance Inequality and Split.}
We define following notation for the objective functions
\begin{align*}
	\hat F_t(q) &= \sum_{i=1}^n w_{i,t} d(y_i, q)^2 & \hat F_t(q,p) &=\hat F_t(q) - \hat F_t(p)\eqcm\\
	\bar F_t(q) &= \sum_{i=1}^n w_{i,t} \Ex{d(y_i, q)^2} & \bar F_t(q,p) &=\bar F_t(q) - \bar F_t(p)\eqcm\\
	F_t(q) &= \Ex{d(Y_t, q)^2} &  F_t(q,p) &= F_t(q) -  F_t(p)
	\eqfs
\end{align*}
Using \textsc{VarIneq} and the minimizing property of $\hat m_t$ we obtain
\begin{align*}
	C_{\ms{Vlo}}^{-1} d(\hat m_t, m_t)^\alpha
	&\leq  
	F_{t}(\hat m_t, m_t)
	\\&\leq  
	F_{t}(\hat m_t, m_t) - \hat F_{t}(\hat m_t, m_t)
	\\&=
	\br{F_{t}(\hat m_t, m_t) - \bar F_{t}(\hat m_t, m_t)}
	+ 
	\br{\bar F_{t}(\hat m_t, m_t) - \hat F_{t}(\hat m_t, m_t)}
\end{align*}
The first parenthesis represents the bias part, the second one the variance part. We will bound the former using \textsc{HölderSmoothDensity}, the later by an empirical process argument.

\medskip\noindent\textbf{Variance.}
Define
\begin{align*}
	Z_i(q) &= w_{i,t} \br{d(y_i, q)^2 - d(y_i, m_{t})^2} - \Ex*{w_{i,t} \br{d(y_i, q)^2 - d(y_i, m_{t})^2}}
	\eqfs
\end{align*}
Then $Z_1, \dots, Z_n$ are independent and centered processes with $Z_i(m_t) = 0$. 
They are integrable due to \textsc{Moment}.
By the definition of $\mf a$,
\begin{equation*}
	\abs{Z_i(q) - Z_i(p) - Z_i\pr(q) +Z_i\pr(p)} \leq \abs{w_{i,t}} \mf a(y_i, y_i\pr) d(q,p)
	\eqcm
\end{equation*}
where $Z_i(q)\pr$ and $y_i\pr$ are independent copies of $Z_i(q)$ and $y_i$, respectively.
\autoref{thm:empproc} yields
\begin{align*}
&\Ex*{\sup_{q\in\ball(m_t, d, \delta)}\abs{\bar F_{t}(q, m_t) - \hat F_{t}(q, m_t)}^\kappa} 
= 
\Ex*{\sup_{q\in\ball(m_t, d, \delta)} \abs{\sum_{i=1}^nZ_i(q)}^\kappa}
\\&\leq 
c_\kappa \br{\Ex*{\br{\sum_{i=1}^n w_{i,t}^2 \mf a(y_i, y_i\pr)^2}^{\frac\kappa2}}^{\frac1\kappa} \gamma_2(\ball(m_t, d, \delta), d)}^\kappa
\end{align*}
for a constant $c_\kappa$ depending only on $\kappa$.
Define $W = \sum_{i=1}^n w_{i,t}^2$ and $v_i = w_{i,t}^2/W$. We apply \textsc{Moment},
\begin{align*}
	\Ex*{\br{\sum_{i=1}^n w_{i,t}^2 \mf a(y_i, y_i\pr)^2}^{\frac\kappa2}}
	&=
	\Ex*{\br{W \sum_{i=1}^n v_i \mf a(y_i, y_i\pr)^2}^{\frac\kappa2}}
	\\&\leq
	\Ex*{W^{\frac\kappa2} \sum_{i=1}^n v_i \mf a(y_i, y_i\pr)^\kappa}
	\\&=
	W^{\frac\kappa2} \sum_{i=1}^n v_i \Ex*{\mf a(y_i, y_i\pr)^\kappa}
	\\&\leq
	W^{\frac\kappa2} C_{\ms{Mom}}^\kappa
	\eqfs
\end{align*}
By \autoref{lmm:locfre:weights}, $W \leq c C_{\ms{Kmi}}^2C_{\ms{Kma}}^2 (nh)^{-1}$. By \textsc{Entropy}, $\gamma_2(\ball(m_t, d, \delta), d) \leq C_{\ms{Ent}} \max(\delta, \delta^\alpha)$. Thus,
\begin{equation*}
\Ex*{\sup_{q\in\ball(m_t, d, \delta)}\abs{\bar F_{t}(q, m_t) - \hat F_{t}(q, m_t)}^\kappa} 
\leq 
c_\kappa \br{C_{\ms{Mom}} C_{\ms{Ent}} C_{\ms{Kmi}}^2C_{\ms{Kma}}^2 \max(\delta, \delta^\alpha) (nh)^{-\frac12}}^\kappa
\eqfs
\end{equation*}

\medskip\noindent\textbf{Bias.}
As $\sum_{i=1}^n w_{i,t} = 1$ (\autoref{lmm:locfre:weights}), we have
\begin{equation*}
	F_{t}(q, m_t) - \bar F_{t}(q, m_t) 
	=
	\sum_{i=1}^n w_{i,t} \Ex{\lozenge(Y_t, y_i, q, m_t)}
	\eqfs
\end{equation*}
Using the $\mu$-density $y \mapsto \rho(y|t)$ of $Y_t$, we can write 
	$\Ex{\ol{Y_t}q^2-\ol{Y_t}p^2} =$ \linebreak $\int \br{\ol{y}q^2 - \ol{y}p^2} \rho(y|t) \dl \mu(y)$.		
	By \textsc{HölderSmoothDensity}, $t\mapsto \rho(y|t) \in \Sigma(\beta, L(y))$. Thus,
	there are $a_k(y)$ such that $\rho(y|x) = R_{y}(x, x_0) + \sum_{k=0}^\ell a_k(y) (x-x_0)^k$ with $\abs{R_{y}(x, x_0)} \leq L(y) \abs{x-x_0}^\beta$. Using that the weights annihilate polynomials of order $\ell$ \cite[equation (1.68)]{tsybakov08}, we obtain
	\begin{align*}
		\sum_{i=1}^n w_{i,t} \Ex{\lozenge(Y_t, y_i, q, p)}
		&=
		\int \sum_{i=1}^n w_{i,t} \br{\ol{y}q^2 - \ol{y}p^2} \br{\rho(y|t)-\rho(y|x_i)} \dl \mu(y)
		\\&=
		\int \sum_{i=1}^n w_{i,t} \br{\ol{y}q^2 - \ol{y}p^2} R_{y}(t, x_i) \dl \mu(y)
		\\&\leq
		\int \sum_{i=1}^n \abs{w_{i,t}} \abs{\ol{y}q^2 - \ol{y}p^2} \abs{R_{y}(t, x_i)} \dl \mu(y)
		\eqfs
	\end{align*}
	It holds
	\begin{align*}
		\abs{\ol{y}q^2 - \ol{y}p^2}\abs{R_{y}(x, x_0)}
		&\leq
		\ol qp \abs{x - x_0}^\beta \br{\ol yq + \ol yp} L(y)
		\eqfs
	\end{align*}
	Together with $\sum_{i=1}^n \abs{w_{i,t}} \leq c C_{\ms{Kmi}}C_{\ms{Kma}}$ from \autoref{lmm:locfre:weights}, we obtain
	\begin{equation*}
		\abs{\sum_{i=1}^n w_{i,t} \Ex{\lozenge(Y_t, y_i, q, p)}} \leq c C_{\ms{Kmi}}C_{\ms{Kma}}\, \ol qp\, h^\beta \int\br{\ol yq + \ol yp} L(y) \dl \mu(y)
	\end{equation*}
	Recall $H(q,p) = \br{\int  \br{\ol yq+ \ol yp}^2 \mu (\dl y)}^\frac12$. By the Cauchy--Schwartz inequality and \textsc{HölderSmoothDensity},
	\begin{equation*}
		\int\br{\ol yq + \ol yp} L(y) \dl \mu(y) \leq H(q,p) \br{\int L(y)^2 \dl \mu(y)}^{\frac12} \leq H(q,p) C_{\ms{SmD}}
		\eqfs
	\end{equation*}
	Thus, 
	\begin{equation}\label{eq:locfre:replace}
		F_{t}(q, m_t) - \bar F_{t}(q, m_t)  \leq c C_{\ms{Kmi}}C_{\ms{Kma}}C_{\ms{SmD}}\, \ol qp\, h^\beta H(q,m_t) 
	\end{equation}
	\textsc{BiasMoment} states $\Ex{H(\hat m_t,m_t)^\kappa}^{\frac1\kappa} \leq C_{\ms{Bom}}$. Finally we obtain
\begin{align*}
	&\Ex*{\abs{F_{t}(\hat m_t, m_t) - \bar F_{t}(\hat m_t, m_t)}^\kappa \ind_{[0,\delta]}(d(\hat m_t, m_t))}^{\frac1\kappa}
	\\&\leq
	\Ex*{\abs{c C_{\ms{Kmi}}C_{\ms{Kma}} C_{\ms{SmD}} d(\hat m_t, m_t) H(\hat m_t, m_t) h^\beta}^\kappa \ind_{[0,\delta]}(d(\hat m_t, m_t))}^{\frac1\kappa}
	\\&\leq
	c C_{\ms{Kmi}}C_{\ms{Kma}} C_{\ms{SmD}} C_{\ms{Bom}}\delta h^\beta
	\eqfs
\end{align*}

\medskip\noindent\textbf{Peeling.}
For $\delta > 0$ define
\begin{equation*}
	\Delta_\delta(q,p) = \br{\abs{F_{t}(q, p) - \bar F_{t}(q, p)} + \abs{\bar F_{t}(q, p) - \hat F_{t}(q, p)}} \ind_{[0, \delta]}(d(q, p))
	\eqfs
\end{equation*}
Recall that the variance inequality implies
\begin{equation*}
	C_{\ms{Vlo}}^{-1}d(\hat m_t, m_t)^2
	\leq
	\br{F_{t}(\hat m_t, m_t) - \bar F_{t}(\hat m_t, m_t)}
	+ 
	\br{\bar F_{t}(\hat m_t, m_t) - \hat F_{t}(\hat m_t, m_t)}
	\eqfs
\end{equation*}
Let $0 < a < b < \infty$. The inequality above and Markov's inequality yield
\begin{align*}
	\PrOf{d(\hat m_t, m_t) \in [a,b]} 
	\leq 
	\PrOf{a^2 \leq C_{\ms{Vlo}} \Delta_{b}(\hat m_t, m_t)}
	\leq 
	\frac{C_{\ms{Vlo}}^\kappa \Ex{\Delta_{b}(\hat m_t, m_t)^\kappa}}{a^{2\kappa}}
	\eqfs
\end{align*}
Our previous consideration allow us the bound the expectation by a variance and a bias term:
\begin{align*}
	\Ex{\Delta_\delta(\hat m_t, m_t)^\kappa}
	&\leq
	2^{\kappa-1} \Bigg(\Ex*{\abs{F_{t}(\hat m_t, m_t) - \bar F_{t}(\hat m_t, m_t)}^\kappa \ind_{[0,\delta]}(d(\hat m_t, m_t))} 
		\\&\quad+ \Ex*{\sup_{q\in\ball(m_t, d, \delta)}\abs{\bar F_{t}(q, m_t) - \hat F_{t}(q, m_t)}^\kappa}\Bigg)
	\\&\leq
	c_\kappa \br{ C_{\ms{Kmi}}C_{\ms{Kma}} C_{\ms{SmD}} C_{\ms{Bom}} h^\beta + C_{\ms{Mom}} C_{\ms{Ent}}  C_{\ms{Kmi}}^2C_{\ms{Kma}}^2 (nh)^{-\frac12}}^\kappa \max(\delta, \delta^\alpha)^\kappa
	\eqfs
\end{align*}
We are now prepared to apply peeling (also called slicing): Let $s > 0$. Set $A= C_{\ms{Vlo}} C_{\ms{Kmi}}C_{\ms{Kma}} C_{\ms{SmD}} C_{\ms{Bom}} h^\beta + C_{\ms{Vlo}} C_{\ms{Mom}} C_{\ms{Ent}} c C_{\ms{Kmi}}^2 C_{\ms{Kma}}^2 (nh)^{-\frac12}$. It holds
\begin{align*}
	\PrOf{d(\hat m_t, m_t) > s} 
	&\leq 
	\sum_{k=0}^\infty \PrOf{d(\hat m_t, m_t) \in [2^ks, 2^{k+1}s]} 
	\\&\leq 
	\sum_{k=0}^\infty 
	\frac{c_\kappa A^\kappa \max(2^{k+1}s, (2^{k+1}s)^\alpha)^\kappa}{(2^ks)^{2\kappa}}
	\\&\leq 
	c_\kappa A^\kappa \br{s^{-\kappa}+s^{-\kappa(2-\alpha)}} \sum_{k=0}^\infty 2^{-k\kappa(2-\alpha)}
	\\&\leq 
	c_{\kappa} A^\kappa \br{s^{-\kappa}+s^{-\kappa(2-\alpha)}}
	\eqfs
\end{align*}
We integrate this tail bound to bound the expectation. For this we require $\kappa > \frac{2}{2-\alpha}$. Set $B =c_\kappa A^\kappa$, then
\begin{align*}
\Ex{d(\hat m_t, m_t)^2} 
&= 
2 \int_0^\infty s \PrOf{d(\hat m_t, m_t) > s} \dl s
\\&\leq
2 \int_0^\infty s \min\brOf{1, B\br{s^{-\kappa}+s^{-\kappa(2-\alpha)}}} \dl s
\\&\leq
2 \int_0^\infty s \min\brOf{1, Bs^{-\kappa}} \dl s
+
2 \int_0^\infty s \min\brOf{1, Bs^{-\kappa(2-\alpha)}} \dl s
\eqfs
\end{align*}
For the first summand,
\begin{align*}
2\int_0^\infty s \min\brOf{1, Bs^{-\kappa}} \dl s
&=
2\int_0^{B^{\frac1\kappa}} s\, \dl s + 2B\int_{B^{\frac1\kappa}}^\infty s^{1-\kappa} \dl s
\\&=
B^{\frac{2}{\kappa}} + \frac{2B}{\kappa - 2} B^\frac{2-\kappa}{\kappa}
\\&=
\frac{\kappa}{\kappa - 2} B^\frac{2}{\kappa}
\eqfs
\end{align*}
Similarly,
\begin{equation*}
2\int_0^\infty s \min\brOf{1, Bs^{-\kappa(2-\alpha)}} \dl s \leq \frac{\kappa(2-\alpha)}{\kappa(2-\alpha) - 2} B^\frac{2}{\kappa(2-\alpha)}
\end{equation*}
Thus, 
\begin{align*}
\Ex{d(\hat m_t, m_t)^2}  
&\leq 
c_\kappa \br{A^2 + A^{\frac{2}{2-\alpha}}}
\\&\leq
c_{\alpha,\kappa} \br{C_{\ms{Vlo}} C_{\ms{Kmi}}C_{\ms{Kma}} C_{\ms{SmD}} C_{\ms{Bom}}}^{\frac{2}{2-\alpha}} \br{h^{2\beta} + h^\frac{2\beta}{2-\alpha}} +
\\&\hphantom{\leq}\ \, c_{\alpha,\kappa}
\br{C_{\ms{Vlo}} C_{\ms{Mom}} C_{\ms{Ent}} C_{\ms{Kmi}}^2C_{\ms{Kma}}^2}^{\frac{2}{2-\alpha}} \br{(nh)^{-1} + (nh)^{-\frac{1}{2-\alpha}}}
\eqfs
\end{align*} 
\subsubsection{Main Theorems}
We use \autoref{thm:locfre} to prove the two main theorems concerning \texttt{LocFre}.
Recall $H(q,p) = \br{\int  \br{\ol yq+ \ol yp}^2 \mu (\dl y)}^\frac12$. 
\begin{proof}[Proof of \autoref{cor:locfre:bounded}]
	We want to apply \autoref{thm:locfre} with $\alpha=1$.
	As $\diam(\mc Q, d) < \infty$, $H(q,p) \leq 2 \diam(\mc Q)$ for all $q,p\in\mc Q$, and we can set $C_{\ms{Bom}} = 2\,\diam(\mc Q, d)$.
	Furthermore, $\ol yq^2 - \ol yp^2 - \ol zq^2 + \ol zp^2 \leq 4 \ol qp \diam(\mc Q, d)$. Thus, $\mf a(y, z) \leq 4  \diam(\mc Q, d)$ and we can choose $C_{\ms{Mom}} = 4  \diam(\mc Q, d)$. 
	Lastly, we may integrate the inequality $\Ex{\ol {m_{t}}{\hat m_{t}}^2} \leq C_1 h^{2\beta} + C_2 (nh)^{-1}$ with respect to $t$ to obtain the bound for the mean integrated squared error.
\end{proof}
\begin{proposition}\label{prop:locfre:BomBound}
	Let $\mc Q$ be a Hadamard space. Assume \textsc{HölderSmoothDensity}, \textsc{Kernel}, \textsc{Moment}.
	To fulfill \textsc{BiasMoment}, we can choose
	\begin{equation*}
		C_{\ms{Bom}} = c_\kappa C_{\ms{Mom}} C_{\ms{Kmi}}C_{\ms{Kma}} C_{\ms{Len}}C_{\ms{Int}}
		\eqfs
	\end{equation*}
\end{proposition} 
\begin{proof}[Proof of \autoref{prop:locfre:BomBound}]
	Using the triangle inequality
	\begin{align*}
		H(q,p)^2 
		&= 
		\int  \br{\ol yq + \ol yp}^2 \mu (\dl y)
		\\&\leq 
		\int  \br{\ol qp + 2\ol yp}^2 \mu (\dl y)
		\\&\leq 
		2\int  \ol qp^2 + 4\ol yp^2 \mu (\dl y)
		\\&\leq 
		2 \ol qp^2 + 8 \int \ol yp^2 \mu (\dl y)
	\end{align*}
	as $\mu$ is a probability measure.
	\begin{align*}
		\Ex{H(\hat m_t,m_t)^\kappa}^{\frac1\kappa} 
		&\leq 
		\Ex*{\br{2 \ol {\hat m_t}{m_t}^2 + 8 \int \ol y{m_t}^2 \mu (\dl y)}^{\frac\kappa2}}^{\frac1\kappa}
		\\&\leq 
		c_\kappa \br{\Ex*{\ol {\hat m_t}{m_t}^{\kappa}}^\frac1\kappa +  \br{\int \ol y{m_0}^2 \mu (\dl y)}^{\frac12} +  \ol{m_t}{m_0}}
		\\&\leq 
		c_\kappa \br{\Ex*{\ol {\hat m_t}{m_t}^{\kappa}}^\frac1\kappa +  C_{\ms{Int}} +  C_{\ms{Len}}}
		\eqfs
	\end{align*}
	Next, we will bound $\Ex{\ol{m_t}{\hat m_t}^\kappa}$.
	Let $W = \sum_{i=1}^n \abs{w_{i,t}}$.
	First, as \textsc{VarIneq} holds in Hadamard spaces with $C_{\ms{Vlo}}=1$, $\lozenge(y, z, q, p) \leq 2 \ol yz\,\ol qp$ in Hadamard spaces, and the minimizing property of $\hat m_t$,
	\begin{align*}
		\ol{m_t}{\hat m_t}^2
		&\leq 
		F_t(\hat m_t, m_t)
		\\&\leq 
		F_t(\hat m_t, m_t) - \hat F_t(\hat m_t, m_t)
		\\&=
		\sum_{i=1}^n w_{i,t} \Ex{\lozenge(Y_t, y_i, m_t, \hat m_t) | y_{1\dots n}}
		\\&\leq 
		2 \sum_{i=1}^n \abs{w_{i,t}} \ol {\hat m_t}{m_t}\, \Ex{d(Y_t, y_i) | y_i}
		\eqfs
	\end{align*}
	Thus,
	\begin{equation*}
		\ol{m_t}{\hat m_t} \leq 
		\sum_{i=1}^n \abs{w_{i,t}} \Ex{d(Y_t, y_i) | y_i}
	\end{equation*}
	With Jensen's inequality
	\begin{align*}
		\Ex{\ol{m_t}{\hat m_t}^\kappa}
		&\leq
		\Ex*{\br{\sum_{i=1}^n \abs{w_{i,t}} \Ex{d(Y_t, y_i) | y_i}}^\kappa}
		\\&=
		W^\kappa \Ex*{\br{\sum_{i=1}^n \frac{\abs{w_{i,t}}}{W} \Ex{d(Y_t, y_i) | y_i}}^\kappa}
		\\&\leq
		W^\kappa \sum_{i=1}^n \frac{\abs{w_{i,t}}}{W}  \Ex*{\Ex{d(Y_t, y_i) | y_i}^\kappa}
		\\&\leq
		W^\kappa \sum_{i=1}^n \frac{\abs{w_{i,t}}}{W}  \Ex{d(Y_t, y_i)^\kappa}
		\\&\leq
		W^\kappa \sup_{s,t\in[0,1]}\Ex*{d(Y_t, Y\pr_s)^\kappa}
		\eqfs
	\end{align*}
	As $d$ is a metric,
	\begin{align*}
		\Ex*{d(Y_t, Y\pr_s)^\kappa}
		&\leq
		\Ex*{\br{d(Y_t, m_t) + d(m_t, m_s) + d(m_s, Y\pr_s)}^\kappa}
		\\&\leq
		3^\kappa \br{2\sup_{t\in[0,1]}\Ex*{d(Y_t, m_t)^\kappa} + d(m_t, m_s)^\kappa}
		\\&\leq
		c_\kappa \br{C_{\ms {Mom}}^\kappa +C_{\ms{Len}}^\kappa}
		\eqfs
	\end{align*}
	\autoref{lmm:locfre:weights} shows $W \leq c C_{\ms{Kmi}}C_{\ms{Kma}}$. This completes the proof.
\end{proof}
\begin{proof}[Proof of \autoref{cor:locfre:hadamard}]
	We want to apply \autoref{thm:locfre}.
	\textsc{VarIneq} holds in Hadamard spaces with $C_{\ms{Vlo}} = 1$. Furthermore, the quadruple inequality in Hadamard spaces yields $\mf a(y, z) = 2d(y, z)$, which allows to state the moment condition with respect to $d$ instead of $\mf a$.
	We bound $\Ex{H(\hat m_t,m_t)^\kappa}^{\frac1\kappa} \leq C_{\ms{Bom}}$ using
	\begin{equation*}
		C_{\ms{Bom}} = c_\kappa C_{\ms{Mom}}C_{\ms{Kmi}}C_{\ms{Kma}}C_{\ms{Len}}C_{\ms{Int}}\eqcm
	\end{equation*}
	see \autoref{prop:locfre:BomBound}.
	Lastly, we may integrate the inequality 
	\begin{equation*}
		\Ex*{\ol {m_{t}}{\hat m_{t}}^2} \leq C_1 \br{h^{2\beta}+h^{\frac{2\beta}{2-\alpha}}} + C_2 \br{(nh)^{-1} + (nh)^{-\frac{1}{2-\alpha}}}
	\end{equation*}
	with respect to $t$ to obtain the bound for the mean integrated squared error.
\end{proof}
\subsection{\texttt{OrtFre}}
\subsubsection{A General Result}
We prove a general theorem that implies the main theorems concerning \texttt{OrtFre}. 

\begin{assumptions}\label{ass:trifre:app}\mbox{ }
\begin{itemize}
\item 
	\textsc{VarIneq}: There is $C_{\ms{Vlo}}\in[1,\infty)$ such that $C_{\ms{Vlo}}^{-1} \,\ol q{m_t}^2 \leq \Ex{\ol{Y_t}{q}^2 - \ol{Y_t}{m_t}^2}$ for all $q\in\mc Q$ and $t\in[0,1]$.
\item
	\textsc{Entropy}: 
	There are $C_{\ms{Ent}} \in [1,\infty)$ and $\alpha \in [1,2)$ such that 
	\begin{equation*}
		\gamma_2(\mc B, d) \leq C_{\ms{Ent}}\max(\diam(\mc B, d), \diam(\mc B, d)^\alpha)
	\end{equation*}
	for all $\mc B \subset \mc Q$, where $\gamma_2$ is the measure of entropy defined \autoref{def:entropy}.
\item 
	\textsc{Moment}:
	There are $\kappa > \frac{2}{2-\alpha}$ and $C_{\ms{Mom}} \in [1,\infty)$ such that $\Ex{d(Y_t, m_t)^\kappa}^{\frac1\kappa} \leq C_{\ms{Mom}}$ for all $t \in [0, 1]$.
\item \textsc{SobolevSmoothDensity}: 
	The function $[0,1] \to \mc Q,\, t\mapsto m_t$ is continuous. Let $C_{\ms{Len}} \in [1,\infty)$ such that $\sup_{s,t\in[0,1]} d(m_s, m_t) \leq C_{\ms{Len}}$. 
	Let $\mu$ be a probability measure on $\mc Q$. Let $C_{\ms{Int}} \in [1,\infty)$ such that $\int \ol{y}{m_0}^2 \mu (\dl y) \leq C_{\ms{Int}}$. 
	For all $t\in[0,1]$, the random variable $Y_t$ has a density $y\mapsto \rho(y|t)$ with respect to $\mu$. 
	Let $\beta\geq 1$. For $\mu$-almost all $y\in\mc Y$, there is $L(y)\geq0$ such that $t \mapsto \rho(y|t) \in W^{\ms{per}}(\beta, L(y))$. Furthermore, there is $C_{\ms {SmD}}\in[1,\infty)$ such that $\int L(y)^2 \dl \mu(y) \leq C_{\ms {SmD}}^2$. 
\item
	\textsc{BiasMoment}: Define $H(q,p) = \br{\int  \br{ \ol yq + \ol yp}^2 \mu (\dl y)}^\frac12$. There is $C_{\ms{Bom}}\in[1,\infty)$ such that $\Ex{H(\hat m_t,m_t)^\kappa}^{\frac1\kappa} \leq C_{\ms{Bom}}$ for all $t\in[0,1]$.
\end{itemize}
\end{assumptions}
\begin{theorem}[\texttt{OrtFre} General]\label{thm:trifre}
	Assume \textsc{VarIneq}, \textsc{Entropy} with $\alpha=1$, \textsc{Moment}, \textsc{BiasMoment},  \textsc{SobolevSmoothDensity}.
	Then
	\begin{equation*}
		\Ex*{\int_0^1 \ol{m_t}{\hat m_t}^2 \dl t} \leq 
				C_1 \br{N^{-2\beta} + 
					N n^{1-2\beta}} + C_2 \frac Nn
				\eqcm 
	\end{equation*}
	where
	$C_1 =	c_{\kappa,\beta} C_{\ms{Vlo}}^2	C_{\ms{SmD}}^2 C_{\ms{Bom}}^2$
	and
	$C_2 =	c_{\kappa,\beta} C_{\ms{Vlo}}^2	C_{\ms{Mom}}^2 C_{\ms{Ent}}^2$.
\end{theorem}
The difference of the objective functions is split into three parts in \autoref{lmm:split}.
In \autoref{lmm:peeling}, we use a peeling device and the variance inequality to relate this difference to the distance between the minimizers $\hat m_t$ and $m_t$, which is the quantity to be bounded in the theorem. Of the three parts, two bias related quantities are bounded in \autoref{lmm:trifre:r} and \autoref{lmm:trifre:fr} with an auxiliary result in \autoref{lmm:bessel}. The third part, a variance term, is bounded in \autoref{lmm:fepsilon} via chaining. The bounds on the three parts are summarized in \autoref{lmm:kappa_moment}. In the end, the integral over $t$ is applied to calculate the mean integrated squared error. Here, the auxiliary result \autoref{lmm:h} is applied.
\subsubsection{Proof of the General Result}
For shorter notation define $F_t(q,p) := F_t(q)-F_t(p)$ and $\hat F_t(q,p) := \hat F_t(q)-\hat F_t(p)$.
We introduce the Fourier coefficients $\vartheta_j(q,p)$ of $t \mapsto F_{t}(q,p)$ with respect to the trigonometric basis
\begin{equation*} 
	\vartheta_j(q,p) = \int_0^1 \psi_j(x) F_x(q,p) \dl x
\end{equation*}
such that $F_t(q,p) = \sum_{j=1}^\infty \vartheta_j(q,p) \psi_j(t)$ due to \textsc{SobolevSmoothDensity}.
Define 
\begin{align*}
r_t(q,p) &= \sum_{k=N+1}^\infty \vartheta_j(q,p) \psi_j(t)\eqcm
&
F^r_t(q,p) &= \Psi_N(t)\tr \frac1n \sum_{i=1}^n \Psi_N(x_i) r_{x_i}(q,p)\eqcm
\\
\varepsilon_t(y,q,p) &= F_t(q,p)- \br{\ol yq^2 - \ol yp^2}\eqcm
&
F^\varepsilon_t(q,p) &= \Psi_N(t)\tr \frac1n \sum_{i=1}^n \Psi_N(x_i) \varepsilon_{x_i}(y_i,q,p)\eqfs
\end{align*}
\begin{lemma}\label{lmm:split}
	If $N<n$, then
	\begin{equation*}
		F_t(q,p) - \hat F_t(q,p) = r_t(q, p) + F^\varepsilon_t(q,p) - F^r_t(q,p)
		\eqfs
	\end{equation*}
\end{lemma}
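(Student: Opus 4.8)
The plan is a direct expansion of $\hat F_t(q,p)$; the only nontrivial ingredient is the \emph{exact} discrete orthonormality of the trigonometric basis on the grid $x_i=\frac in$. I would start from the definition
\begin{equation*}
	\hat F_t(q,p) = \Psi_N(t)\tr \frac1n\sum_{i=1}^n \Psi_N(x_i)\br{\ol{y_i}{q}^2 - \ol{y_i}{p}^2}\eqcm
\end{equation*}
and substitute $\ol{y_i}{q}^2 - \ol{y_i}{p}^2 = F_{x_i}(q,p) - \varepsilon_{x_i}(y_i,q,p)$, which is nothing but the definition of $\varepsilon_{x_i}$. The contribution of the $\varepsilon$-term is by definition precisely $-F^\varepsilon_t(q,p)$, so it remains to identify the term $\Psi_N(t)\tr \frac1n\sum_{i=1}^n \Psi_N(x_i) F_{x_i}(q,p)$.

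For that term I would insert the Fourier expansion $F_{x}(q,p) = \sum_{k=1}^\infty \vartheta_k(q,p)\psi_k(x)$, which holds pointwise by \textsc{SmoothDensity}, and split it at $N$: $F_{x_i}(q,p) = \Psi_N(x_i)\tr\vartheta^{(N)} + r_{x_i}(q,p)$ with $\vartheta^{(N)} = \br{\vartheta_k(q,p)}_{k=1}^N$. The remainder part contributes exactly $F^r_t(q,p)$, while the head part contributes $\Psi_N(t)\tr\br{\frac1n\sum_{i=1}^n \Psi_N(x_i)\Psi_N(x_i)\tr}\vartheta^{(N)}$.

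The crux is the matrix identity $\frac1n\sum_{i=1}^n \Psi_N(x_i)\Psi_N(x_i)\tr = I_N$, equivalently the discrete orthonormality $\frac1n\sum_{i=1}^n \psi_k(x_i)\psi_\ell(x_i) = \delta_{k\ell}$ for $k,\ell\le N$ on the equispaced grid $x_i=\frac in$. This is precisely where the hypothesis $N<n$ enters: expanding the products of cosines and sines via the standard identities, the sum reduces to the elementary facts that $\frac1n\sum_{i=1}^n\cos(2\pi m x_i)$ equals $1$ for $m=0$ and $0$ for integers $0<|m|<n$, while $\frac1n\sum_{i=1}^n\sin(2\pi m x_i)=0$ for all integers $m$; the frequencies $m=k\pm\ell$ that occur all satisfy $|m|<n$ because $k,\ell\le\lfloor N/2\rfloor$ and $N<n$, and $m=0$ occurs only for the matching diagonal entries. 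Granting this identity, the head part equals $\Psi_N(t)\tr\vartheta^{(N)} = \sum_{k=1}^N \vartheta_k(q,p)\psi_k(t) = F_t(q,p) - r_t(q,p)$. Collecting the three contributions yields $\hat F_t(q,p) = \br{F_t(q,p) - r_t(q,p)} + F^r_t(q,p) - F^\varepsilon_t(q,p)$, and rearranging is exactly the asserted identity. The only step requiring genuine care is the frequency bookkeeping in the discrete-orthogonality computation; everything else is mechanical substitution.
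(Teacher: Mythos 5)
Your proof is correct and follows essentially the same route as the paper's: decompose $\hat F_t(q,p)$ using the definition of $\varepsilon$, split the Fourier expansion of $F_{x_i}(q,p)$ at index $N$, and invoke discrete orthonormality of the trigonometric system on the equispaced grid to recover the head part as $F_t(q,p) - r_t(q,p)$. The only cosmetic difference is that you re-derive the discrete orthonormality relation $\frac1n\sum_i\psi_k(x_i)\psi_\ell(x_i)=\delta_{k\ell}$ (for $k,\ell < n$) by elementary frequency bookkeeping, whereas the paper simply cites it from Tsybakov (Lemma 1.7); also, your parenthetical ``$k,\ell\le\lfloor N/2\rfloor$'' should read that the \emph{frequencies} underlying $\psi_k,\psi_\ell$ are at most $\lfloor N/2\rfloor$ while the indices $k,\ell$ run up to $N$, but the conclusion $|m|<n$ stands.
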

\begin{proof}[Proof of \autoref{lmm:split}]\label{proof:lmm:split}
	It holds
	\begin{equation*}
		\frac1n \sum_{i=1}^n \psi_j(x_i)\psi_{\tilde j}(x_i) = \delta_{j \tilde j}
	\end{equation*}
	for $j,\ell\in \cb{1, \dots, n-1}$, see \cite[Lemma 1.7]{tsybakov08}. 
	Set
	\begin{equation*}
		F_t^N(q,p) = \sum_{k=1}^N \vartheta_j(q,p) \psi_j(t)
		\eqfs
	\end{equation*}
	Then $\frac1n \sum_{i=1}^n \psi_j(x_i) F_{x_i}^N(q,p) = \vartheta_j(q,p)$ for $j\leq N < n$. Thus,
	\begin{equation*}
		F_t^N(q,p) = \Psi_N(t)\tr \frac1n \sum_{i=1}^n \Psi_N(x_i) F_{x_i}^N(q,p)
		\eqfs
	\end{equation*}
	As $F_t(q,p) - r_t(q, p) = F_t^N(q,p)$, we obtain
	\begin{align*}
		&F_t(q,p) - \hat F_t(q,p) - r_t(q,p) 
		\\&=
		\Psi_N(t)\tr \frac1n \sum_{i=1}^n \Psi_N(x_i) F^N_{x_i}(q,p) - \Psi_N(t) \frac1n \sum_{i=1}^n \Psi_N(x_i) \br{\ol {y_i}q^2 - \ol{y_i}p^2}
		\\&=
		\Psi_N(t)\tr \frac1n \sum_{i=1}^n \Psi_N(x_i) \br{F^N_{x_i}(q,p) - F_{x_i}(q,p) + F_{x_i}(q,p) - \br{\ol {y_i}q^2 - \ol{y_i}p^2}}
		\\&=
		\Psi_N(t)\tr \frac1n \sum_{i=1}^n \Psi_N(x_i) \br{-r_{x_i}(q,p) + \varepsilon_{x_i}(y_i,q,p)}
		\\&=
		F^\varepsilon_t(q,p) - F^r_t(q,p)
		\eqfs
	\end{align*}
\end{proof}
Next, we apply the peeling device.
\begin{lemma}\label{lmm:peeling}
	For $b > 0$, define 
	\begin{equation*}
		U_{t,b} = \sup_{q\in \ball(m_t,b,d)} F_t^\epsilon(q, m_t) + 
		\br{r_t(\hat m_t,m_t) - 
			F^r_t(\hat m_t,m_t)} \ind_{[0,b]}(\ol{\hat m_t}{m_t}))
		\eqfs
	\end{equation*}
	Let $\kappa > 2$. Define
	\begin{equation*}
		h(t) = \sup_{b > 0} \br{\frac{\Ex{U_{t,b}^\kappa}}{b^{\kappa}}}^{\frac1\kappa}
	\end{equation*}
	Assume \textsc{VarIneq}.
	Then
	\begin{equation*}
		\Ex*{\ol{\hat m_t}{m_t}^2} 
		\leq
		\frac{4\kappa}{\kappa-2} C_{\ms{Vlo}}^2 h(t)^2
		\eqfs
	\end{equation*}
\end{lemma}
\begin{proof}[Proof of \autoref{lmm:peeling}]\label{proof:lmm:peeling}
	For a function $h(t) > 0$, we have
	\begin{align*}
		\Ex*{\frac{\ol{\hat m_t}{m_t}^2}{h(t)^2}} 
		&=
		\int_0^\infty
		2 s \PrOf{\ol{\hat m_t}{m_t} > s h(t)}  \dl s
		\eqfs
	\end{align*}
	By \textsc{VarIneq}, the minimizing property of $\hat m_t$, and \autoref{lmm:split}, we obtain
	\begin{align*}
		C_{\ms{Vlo}}^{-1}\, \ol{\hat m_t}{m_t}^2
		&\leq 
		F_t(\hat m_t,m_t)
		\\&\leq 
		F_t(\hat m_t,m_t) - \hat F_t(\hat m_t,m_t)
		\\&=
		r_t(\hat m_t,m_t) + \hat F_t^\epsilon(\hat m_t,m_t) - F^r_t(\hat m_t,m_t)
		\eqfs
	\end{align*}
	If $\ol{\hat m_t}{m_t} \in[a, b]$ for $0<a<b$, then 
	\begin{align*}
		C_{\ms{Vlo}}^{-1} a^2
		&\leq 
		C_{\ms{Vlo}}^{-1} \, \ol{\hat m_t}{m_t}^2
		\\&\leq
		F_t^\epsilon(\hat m_t, m_t) + r_t(\hat m_t,m_t) - F^r_t(\hat m_t,m_t)
		\\&\leq 
		\sup_{q\in \ball(m_t, b, d)} F_t^\epsilon(q, m_t) + 
		\br{r_t(\hat m_t,m_t) - 
			F^r_t(\hat m_t,m_t)} \ind_{[0,b]}(\ol{\hat m_t}{m_t})
		\\&= U_{t,b}
		\eqfs
	\end{align*}
	Thus, by  Markov's inequality
	\begin{align*}
		\PrOf{\ol{\hat m_t}{m_t} \in[a, b]} \leq \PrOf{a^2 \leq C_{\ms{Vlo}} U_{t,b}} \leq \frac{C_{\ms{Vlo}}^\kappa \Ex{U_{t,b}^\kappa}}{a^{2\kappa}}\eqfs
	\end{align*}
	Let $a_k(s)= 2^ksh(t)$.
	As $\Ex{U_{t,b}^\kappa} \leq b^{\kappa} h(t)^{\kappa}$, we have
	\begin{align*}
		\PrOf{\ol{\hat m_t}{m_t} > s h(t)}
		&\leq 
		\min\brOf{1, \sum_{k=0}^\infty
			\PrOf{\ol{\hat m_t}{m_t} \in[a_k, a_{k+1})}}
		\\&\leq 
		\min\brOf{1, C_{\ms{Vlo}}^\kappa \sum_{k=0}^\infty
			\frac{a_{k+1}^{\kappa} h(t)^{\kappa}}{a_k^{2\kappa}}}
			\eqfs
	\end{align*}
	We obtain
	\begin{align*}
		\frac{a_{k+1}^{\kappa} h(t)^{\kappa}}{a_k^{2\kappa}}
		=
		\frac{\br{2^{k+1}sh(t)}^\kappa h(t)^{\kappa}}{\br{2^ksh(t)}^{2\kappa}}
		=
		\br{\frac{2\cdot 2^ksh(t)h(t)}{2^{2k}s^2h(t)^2}}^\kappa
		=
		\br{2\cdot 2^{-k} s^{-1}}^\kappa
	\end{align*}
	and thus
	\begin{align*}
		 \sum_{k=0}^\infty
		 			\frac{a_{k+1}^{\kappa} h(t)^{\kappa}}{a_k^{2\kappa}}
		=
		2^\kappa s^{-\kappa} \sum_{k=0}^\infty 2^{-k\kappa} 
		=
		\frac{2^\kappa}{1-2^{-\kappa}} s^{-\kappa}
	\end{align*}
	Putting everything together with $c_\kappa = \frac{2^\kappa}{1-2^{-\kappa}} C_{\ms{Vlo}}^\kappa$ yields 
	\begin{align*}
		h(t)^{-2}\Ex*{\ol{\hat m_t}{m_t}^2} 
		&=
		2\int_0^\infty
		s \PrOf{\ol{\hat m_t}{m_t} > s h(t)}  \dl s
		\\&\leq
		2\int_0^\infty
		s \min\brOf{1, c_\kappa s^{-\kappa}}
		\dl s
		\\&=
		\int_0^{c_\kappa^{\frac1\kappa}}
		2s 
		\dl s
		+
		2  c_\kappa \int_{c_\kappa^{\frac1\kappa}}^\infty
		s^{1-\kappa}
		\dl s
		\\&=
		c_\kappa^{\frac2\kappa}
		+ 2 c_\kappa \frac{1}{\kappa-2} \br{c_\kappa^{\frac1\kappa}}^{2-\kappa}
		\\&=
		c_\kappa^{\frac2\kappa} \br{1 + \frac{2}{\kappa-2}}
		\\&\leq
		\frac{4\kappa}{\kappa-2} C_{\ms{Vlo}}^2
		\eqfs
	\end{align*}
\end{proof}%
Using the smoothness assumption, we are able to bound the $r$-term.
\begin{lemma}[Bound on $r$]\label{lmm:trifre:r}
Assume \textsc{SobolevSmoothDensity}. Then
	\begin{align*}
		\Ex{\abs{r_t(\hat m_t,m_t)}^\kappa\ind_{[0,b]}(\ol{\hat m_t}{m_t})} 
		&\leq
		b^\kappa h_N(t)^\kappa C_{\ms{Bom}}^\kappa
		\eqcm
	\end{align*}
	where
	\begin{align*}
		h_N(t) &= \br{ \int \br{\sum_{\ell=N+1}^\infty \xi_\ell(y) \psi_\ell(t)}^2 \mu (\dl y)}^{\frac12}
		\\
		H(q,p) &= \br{\int  \br{\ol yq + \ol yp}^2 \mu (\dl y)}^\frac12 
		\eqfs
	\end{align*}
\end{lemma}
\begin{proof}
It holds
\begin{align*}
	\vartheta_j(q,p) 
	&= 
	\int_0^1 \psi_j(x) F_x(q,p) \dl x
	\\&=
	\int_0^1 \int \psi_j(x) \br{\ol yq^2 - \ol yp^2} \rho(y|x) \dl \mu (y) \dl x
	\\&=
	\int \br{\ol yq^2 - \ol yp^2} \int_0^1 \psi_j(x) \rho(y|x) \dl x \dl \mu (y)
	\\&=
	\int \br{\ol yq^2 - \ol yp^2} \xi(y) \dl \mu (y)
	\eqfs
\end{align*}
Thus, 
\begin{align*}
	r_t(q, p) 
	&= 
	\int \br{\ol yq^2 - \ol yp^2} \sum_{\ell=N+1}^\infty \xi_\ell(y) \psi_\ell(t) \mu (\dl y)
	\\&\leq 
	\br{\int  \br{\ol yq^2 - \ol yp^2}^2 \mu (\dl y)}^\frac12 \br{ \int \br{\sum_{\ell=N+1}^\infty \xi_\ell(y) \psi_\ell(t)}^2 \mu (\dl y)}^{\frac12}
	\\&\leq 
	\ol qp H(q,p) h_N(t)
	\eqfs
\end{align*}
Finally, we obtain
\begin{align*}
	\Ex{\abs{r_t(\hat m_t,m_t)}^\kappa\ind_{[0,b]}(\ol{\hat m_t}{m_t})} 
	&\leq
	b^\kappa h_N(t)^\kappa \Ex{H(\hat m_t,m_t)^\kappa}
	\eqfs
\end{align*}
\end{proof}
Using the previous result, we can also establish a bound on $F^r$.
\begin{lemma}[Bound on $F^r$]\label{lmm:trifre:fr}
	\begin{align*}
	\Ex{F^r_t(\hat m_t,m_t)^\kappa\ind_{[0,b]}(\ol{\hat m_t}{m_t})} 
	\leq c_\kappa \br{N n^{1-2\beta} C_{\ms{SmD}}}^{\kappa} b^\kappa C_{\ms{Bom}}^\kappa
	\end{align*}
	where $c_\kappa \in [1, \infty)$ depends only on $\kappa$.
\end{lemma}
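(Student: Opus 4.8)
The plan is to obtain a deterministic pointwise bound $|F^r_t(q,p)|\le c\,N n^{1-2\beta}\,\ol qp\,H(q,p)\,C_{\ms{SmD}}$ valid for all $q,p\in\mc Q$ and $t\in[0,1]$, and then pass to the $\kappa$-th moment by using the indicator $\ind_{[0,b]}(\ol{\hat m_t}{m_t})$ to replace $\ol{\hat m_t}{m_t}$ by $b$ and \textsc{BiasMoment} to bound $\Ex{H(\hat m_t,m_t)^\kappa}$ by $C_{\ms{Bom}}^\kappa$.

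First I would expand $F^r_t(q,p)$ in the trigonometric basis. Writing $c_{k\ell}=\frac1n\sum_{i=1}^n\psi_k(x_i)\psi_\ell(x_i)$ and substituting $r_{x_i}(q,p)=\sum_{\ell>N}\vartheta_\ell(q,p)\psi_\ell(x_i)$,
\[
	F^r_t(q,p)=\sum_{k=1}^N\psi_k(t)\sum_{\ell>N}\vartheta_\ell(q,p)\,c_{k\ell}.
\]
By \cite[Lemma 1.7]{tsybakov08}, $c_{k\ell}=\delta_{k\ell}$ for $k,\ell\le n-1$; more generally, since the $x_i=\tfrac in$ lie on an equispaced grid, the restriction of $\psi_\ell$ to the grid depends on $\ell$ only modulo $2n$ (up to sign), whence $|c_{k\ell}|\le 2$ and $c_{k\ell}=0$ unless $\ell\equiv\pm k\pmod{2n}$. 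As $k\le N<n$, only indices $\ell$ in the aliased sets $S_k=\{\ell>N:\ell\equiv\pm k\pmod{2n}\}\subset\{\ell\ge 2n-N\}\subset\{\ell\ge n\}$ contribute, and each $\ell$ lies in at most two of the $S_k$.

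Next I would control the surviving coefficients. The computation in the proof of \autoref{lmm:trifre:r} gives $\vartheta_\ell(q,p)=\int(\ol yq^2-\ol yp^2)\,\xi_\ell(y)\,\dl\mu(y)$ with $\xi_\ell(y)=\int_0^1\psi_\ell(x)\rho(y|x)\,\dl x$, hence $|\vartheta_\ell(q,p)|\le\ol qp\,H(q,p)\,\|\xi_\ell\|_{L^2(\mu)}$ by Cauchy--Schwarz. By \textsc{SmoothDensity}, $(\xi_\ell(y))_\ell$ lies in the Sobolev ellipsoid $\Theta(\beta,L(y))$, so $\sum_\ell w_\ell^{-2}\xi_\ell(y)^2\le L(y)^2$; since $w_\ell^{-2}\ge c_\beta\ell^{2\beta}$ for $\ell\ge 2$, integrating in $y$ yields $\sum_{\ell\ge2}\ell^{2\beta}\|\xi_\ell\|_{L^2(\mu)}^2\le c_\beta\int L(y)^2\dl\mu(y)\le c_\beta C_{\ms{SmD}}^2$. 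Pairing $\ell^{-\beta}$ against $\ell^\beta\|\xi_\ell\|_{L^2(\mu)}$ by Cauchy--Schwarz over the aliased indices (which are all $\ge 2$), and using that each block $[2jn-N,2jn+N]$ holds at most $2N+1$ integers, all of size $\ge jn$, together with $\sum_{j\ge1}j^{-2\beta}<\infty$, one bounds $\sum_{k=1}^N\sum_{\ell\in S_k}|c_{k\ell}|\,|\vartheta_\ell(q,p)|$ and hence, via $|\psi_k(t)|\le\sqrt2$ and the $N$ values of $k$, obtains $|F^r_t(q,p)|\le c\,N n^{1-2\beta}\,\ol qp\,H(q,p)\,C_{\ms{SmD}}$. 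On $\{\ol{\hat m_t}{m_t}\le b\}$ this gives $|F^r_t(\hat m_t,m_t)|\ind_{[0,b]}(\ol{\hat m_t}{m_t})\le c\,N n^{1-2\beta}\,b\,H(\hat m_t,m_t)\,C_{\ms{SmD}}$; raising to the $\kappa$-th power, taking expectations, and applying \textsc{BiasMoment} completes the proof.

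The main obstacle is the aliasing bookkeeping: proving that on the equispaced grid $c_{k\ell}$ vanishes off the arithmetic progressions $\ell\equiv\pm k\pmod{2n}$, and then extracting the sharp polynomial dependence on $N$ and $n$ from the resulting high-frequency sum with explicit constants. The smoothness assumption $\beta\ge1$ is used precisely for the convergence of the weighted series over the aliased blocks (once the Sobolev weights are paired against $\ell^{-2\beta}$ by Cauchy--Schwarz this needs only $\beta>\tfrac12$, which is what keeps the bound free of a logarithmic factor at $\beta=1$); the final rate $N n^{1-2\beta}$ comes from balancing the block width $\sim N$ against the frequency scale $\ge n$ and the coefficient decay $\ell^{-\beta}$.
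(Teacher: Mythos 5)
Your route is genuinely different from the paper's. The paper bounds $F^r_t(q,p)\le\normof{\Psi_N(t)}\cdot\normof{\frac1n\sum_{i=1}^n\Psi_N(x_i)r_{x_i}(q,p)}$ by Cauchy--Schwarz in $\R^N$, uses discrete orthogonality on the grid to replace $r$ by its tail beyond frequency $n$, and then invokes the discrete Bessel inequality (\autoref{lmm:bessel}) to reduce everything to $\frac1n\sum_{i=1}^n r_{n,x_i}(q,p)^2$, which is controlled exactly as in \autoref{lmm:trifre:r}. You instead expand $F^r_t$ in the coefficients $c_{k\ell}$ and track which survive via aliasing. Both arguments rest on the same two facts (frequencies between $N+1$ and $n-1$ cancel on the grid; frequencies $\ge n$ are small by the Sobolev decay), but the paper's version buys freedom from the aliasing bookkeeping: \autoref{lmm:bessel} is a short orthogonality computation, whereas your congruence ``$c_{k\ell}=0$ unless $\ell\equiv\pm k\pmod{2n}$'' is not quite right as stated. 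Aliasing acts on \emph{frequencies} modulo $n$, and since indices $2k$ and $2k+1$ share frequency $k$, the correct condition pairs indices whose frequencies are congruent mod $n$; this does not reduce to a single congruence on the indices (e.g.\ the sine of frequency $n-k$ has index $2(n-k)+1\equiv-(2k-1)\pmod{2n}$, which is neither $+(2k+1)$ nor $-(2k+1)$). This is fixable but fiddly, and it is precisely the bookkeeping the paper's Bessel argument avoids.

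The more substantive point is the exponent. Carried out carefully, your block-wise Cauchy--Schwarz gives, over the union of the aliased sets, $\sum_{\ell}\normof{\xi_\ell}_{L^2(\mu)}\le\big(\sum_{\ell}\ell^{-2\beta}\big)^{1/2}\big(\sum_{\ell}\ell^{2\beta}\normof{\xi_\ell}_{L^2(\mu)}^2\big)^{1/2}\le c_\beta\,(Nn^{-2\beta})^{1/2}C_{\ms{SmD}}$, so the pointwise bound you actually obtain is of order $N^{1/2}n^{-\beta}\,\ol qp\,H(q,p)\,C_{\ms{SmD}}$ (or $Nn^{-\beta}$ with the cruder per-coefficient bound $\normof{\xi_\ell}\le c_\beta\ell^{-\beta}C_{\ms{SmD}}$), not $Nn^{1-2\beta}$; for $\beta>1$ these exceed $Nn^{1-2\beta}$, so the displayed inequality with exponent $\kappa$ on $Nn^{1-2\beta}$ is not reached. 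You are in good company: the paper's own proof ends with $\big(2c_0Nn^{1-2\beta}\int L(y)^2\mu(\dl y)\big)^{\kappa/2}b^\kappa\Ex{H(\hat m_t,m_t)^\kappa}$, i.e.\ a pointwise bound of order $(Nn^{1-2\beta})^{1/2}=N^{1/2}n^{1/2-\beta}$, which is also weaker than the stated $(Nn^{1-2\beta})^{\kappa}$ and in fact weaker than your $N^{1/2}n^{-\beta}$. The downstream use in \autoref{lmm:kappa_moment} and \autoref{thm:trifre}, where squaring $h(t)$ produces the first power of $Nn^{1-2\beta}$, is consistent with the $\kappa/2$ version, so your bound suffices for the theorem. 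In short: a workable and genuinely different route, slightly sharper than the paper's if executed correctly, but the aliasing congruence needs repair and the exponent you claim for the pointwise bound is not the one your computation produces.
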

\begin{proof}
We will show that asymptotically $F^r_t(q,p) \lesssim r_t(q, p)$.
Recall
\begin{align*}
	F^r_t(q,p) &= \Psi_N(t)\tr \frac1n \sum_{i=1}^n \Psi_N(x_i) r_{x_i}(q,p)
	\\
	r_t(q,p) &= \sum_{k=N+1}^\infty \vartheta_j(q,p) \psi_j(t)
\end{align*}
and define
\begin{equation*}
r_{n,t}(q,p) = \sum_{\ell=n}^\infty \vartheta_\ell(q,p) \psi_\ell(t)
\end{equation*}
It holds
\begin{align*}
	F^r_t(q,p) 
	&\leq
	\normOf{\Psi_N(t)}	\normof{\frac1n \sum_{i=1}^n \Psi_N(x_i) r_{x_i} (q,p)}
\end{align*}
By \autoref{lmm:bessel} below, to be shown below,
\begin{equation*}
\normOf{\frac1n\sum_{i=1}^n\Psi_N(x_i) r_{x_i} (q,p)}^2 \leq \frac1n \sum_{i=1}^n r_{x_i} (q,p)^2
\end{equation*}
As in the proof of \autoref{lmm:trifre:r}, we have
\begin{align*}
	\abs{r_{n,t}(q,p)}
	&\leq
	\ol qp h_n(t)^\kappa H(q,p)
	\eqcm
\end{align*}
where 
\begin{align*}
	h_{n}(t)^2 = \int \br{\sum_{\ell=n}^\infty \xi_\ell(y) \psi_\ell(t)}^2 \mu (\dl y)
\end{align*}
Thus,
\begin{align*}
	F^r_t(q,p)^2 
	\leq 
	\ol qp^2 H(q,p)^2 \normof{\Psi_N(t)}^2 \frac1n \sum_{i=1}^n h_{n}(x_i)^2
\end{align*}
\begin{equation*}
	\normof{\Psi_N(t)}^2 \leq 2 N
\end{equation*}
As $\xi(y) \in \mc E(\beta, L(y))$, we have $\sum_{k=1}^\infty \xi_j(y)^2 a_j^{-2} \leq L(y)^2$ with $a_{2j+1} = a_{2j} = (2j)^{-\beta}$.
\begin{equation*}
	\sum_{k=n}^\infty a_j^2 \leq c n^{1-2\beta}
	\eqfs
\end{equation*}
Thus,
\begin{align*}
	\frac1n \sum_{i=1}^n \br{\sum_{j=n}^\infty \xi_j(y) \psi_j(x_i)}^2 
	&\leq
	\frac1n \sum_{i=1}^n \sum_{j=n}^\infty a_j^{-2} \xi_j(y)^2 \sum_{j=n}^\infty a_j^2 \psi_j(x_i)^2 
	\\&\leq
	2 \sum_{j=n}^\infty a_j^{-2} \xi_j(y)^2 \sum_{j=n}^\infty a_j^2
	\\&\leq
	c_0 L(y)^2 n^{1-2\beta}
	\eqfs
\end{align*}
We obtain
\begin{align*}
	\frac1n \sum_{i=1}^n h_{n}(x_i)^2 
	&\leq
	\frac1n \sum_{i=1}^n \int \br{\sum_{\ell=n}^\infty \xi_\ell(y) \psi_\ell(x_i)}^2 \mu (\dl y)
	\\&\leq
	c_0 n^{1-2\beta} \int L(y)^2 \mu (\dl y)
\end{align*}
and can bound
\begin{align*}
	F^r_t(q,p)^2 
	\leq 
	2 c_0 \ol qp^2 H(q,p)^2 N n^{1-2\beta} \int L(y)^2 \mu (\dl y)
	\eqfs
\end{align*}
Finally, the inequalities above yield
\begin{align*}
	\Ex{F^r_t(\hat m_t,m_t)^\kappa\ind_{[0,b]}(\ol{\hat m_t}{m_t})} 
	\leq \br{2 c_0 N n^{1-2\beta} \int L(y)^2 \mu (\dl y)}^{\frac\kappa2} b^\kappa \Ex{H(\hat m_t,m_t)^\kappa} 
	\eqfs
\end{align*}
\end{proof}
We still have to prove following lemma, which was used in the previous proof.
\begin{lemma}\label{lmm:bessel}
Let $f \colon [0,1]\to \R$ be any function and $N < n$.
Then
\begin{equation*}
	\normOf{\frac1n\sum_{i=1}^n\Psi_N(x_i) f(x_i)}^2 \leq \frac1n \sum_{i=1}^n f(x_i)^2
\end{equation*}
\end{lemma}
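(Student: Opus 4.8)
The plan is to recognise the left-hand side as the squared Euclidean norm of the vector of \emph{discrete Fourier coefficients} of $f$, and then to apply Bessel's inequality in the finite-dimensional space $\R^n$ equipped with the discrete inner product $\langle u, v\rangle_n = \frac1n\sum_{i=1}^n u(x_i) v(x_i)$.

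First I would write out the coordinates. The $k$-th component of the vector $\frac1n\sum_{i=1}^n \Psi_N(x_i) f(x_i) \in \R^N$ is $c_k := \frac1n\sum_{i=1}^n \psi_k(x_i) f(x_i) = \langle f, \psi_k\rangle_n$, so that
\begin{equation*}
	\normOf{\frac1n\sum_{i=1}^n\Psi_N(x_i) f(x_i)}^2 = \sum_{k=1}^N c_k^2 = \sum_{k=1}^N \langle f, \psi_k\rangle_n^2 \eqfs
\end{equation*}
Next I would invoke the discrete orthonormality relation $\frac1n\sum_{i=1}^n \psi_k(x_i)\psi_\ell(x_i) = \delta_{k\ell}$, which holds for all $k,\ell \in \{1,\dots,n-1\}$ by \cite[Lemma 1.7]{tsybakov08} (already recalled in the proof of \autoref{lmm:split}). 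Since the hypothesis $N < n$ means $N \leq n-1$, the family $\{\psi_1,\dots,\psi_N\}$ is an orthonormal system in $(\R^n, \langle\cdot,\cdot\rangle_n)$. Bessel's inequality then gives $\sum_{k=1}^N \langle f,\psi_k\rangle_n^2 \leq \|f\|_n^2 = \frac1n\sum_{i=1}^n f(x_i)^2$, which is precisely the claimed bound.

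Equivalently, and without explicitly naming Bessel's inequality, one may let $P$ denote the $\langle\cdot,\cdot\rangle_n$-orthogonal projection onto $\mathrm{span}\{\psi_1,\dots,\psi_N\}$; then $\sum_{k=1}^N c_k^2 = \|Pf\|_n^2 \leq \|f\|_n^2$ by the Pythagorean identity. There is no genuine obstacle in this argument; the only subtlety is that the discrete orthonormality of the trigonometric system on the equispaced grid $x_i = \frac in$ is only available for indices strictly below $n$, which is exactly what the assumption $N < n$ supplies.
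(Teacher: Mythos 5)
Your proposal is correct and is essentially the paper's own argument: the paper defines the residual $s = f - \sum_{\ell\le N} a_\ell \psi_\ell$, checks it is $\langle\cdot,\cdot\rangle_n$-orthogonal to each $\psi_k$ via the discrete orthonormality relation, and concludes $\frac1n\sum_i f(x_i)^2 = \frac1n\sum_i s(x_i)^2 + \sum_\ell a_\ell^2 \ge \sum_\ell a_\ell^2$, which is exactly your projection/Bessel argument written out by hand. No substantive difference.
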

\begin{proof}[Proof of \autoref{lmm:bessel}]\label{proof:lmm:bessel}
	Let $b_\ell = \frac1n \sum_{i=1}^n \psi_\ell(x_i) f(x_i)$ and $s(t) = f(t) - \sum_{\ell=1}^N b_\ell \psi_\ell(t)$.
	Then 
	\begin{align*}
		\frac1n \sum_{i=1}^n s(x_i) \psi_j(x_i)
		&=
		\frac1n \sum_{i=1}^n \br{f(x_i) - \sum_{\ell=1}^N b_\ell \psi_\ell(x_i)} \psi_j(x_i)
		\\&=
		\frac1n \sum_{i=1}^n f(x_i) \psi_j(x_i) - \sum_{\ell=1}^N b_\ell \frac1n \sum_{i=1}^n  \psi_\ell(x_i) \psi_j(x_i)
		\\&=
		b_j - b_j
		\\&=
		0
	\end{align*}
	and thus
	\begin{align*}
		\frac1n \sum_{i=1}^n f(x_i)^2 
		&= 
		\frac1n \sum_{i=1}^n \br{s(x_i) + \sum_{\ell=1}^Nb_\ell \psi_\ell(x_i)}^2
		\\&= 
		\frac1n \sum_{i=1}^n \br{s(x_i)^2 + s(x_i)\sum_{\ell=1}^Nb_\ell \psi_\ell(x_i) + \sum_{\ell,j=1}^Nb_\ell b_j \psi_\ell(x_i)\psi_j(x_i)}
		\\&= 
		\frac1n \sum_{i=1}^n s(x_i)^2 + \sum_{\ell=1}^Nb_\ell \frac1n \sum_{i=1}^n s(x_i) \psi_\ell(x_i) + \sum_{\ell,j=1}^Nb_\ell b_j \frac1n \sum_{i=1}^n \psi_\ell(x_i)\psi_j(x_i)
		\\&= 
		\frac1n \sum_{i=1}^n s(x_i)^2 + \sum_{\ell}^Nb_\ell^2
		\eqfs
	\end{align*}
	Furthermore,
	\begin{align*}
		\normOf{\frac1n\sum_{i=1}^n\Psi_N(x_i) f(x_i)}^2
		&=
		\sum_{\ell=1}^N \br{\psi_\ell(x_i) f(x_i)}^2
		\\&=
		\sum_{\ell=1}^N b_\ell^2
	\end{align*}
	As $\frac1n \sum_{i=1}^n s(x_i)^2 \geq 0$ we have proved the claim.
\end{proof}
Next, we tackle the variance term.
\begin{lemma}[Bound on $F^\varepsilon$]\label{lmm:fepsilon}
	Assume \textsc{Moment}, \textsc{Entropy}.
	Then
	\begin{equation*}
		\Ex*{\sup_{q\in\mc B}F^\varepsilon_t(q,p)^\kappa} \leq c_\kappa C_{\ms{Mom}}^\kappa n^{-\frac\kappa2} C_{\ms{Ent}}^\kappa b^\kappa \br{\Psi_N(t)\tr\Psi_N(t)}^\frac\kappa2 
		\eqfs
	\end{equation*}
\end{lemma}
\begin{proof}[Proof of \autoref{lmm:fepsilon}]\label{proof:lmm:fepsilon}
	Recall $F^\varepsilon_t(q,p) = \Psi_N(t)\tr \frac1n \sum_{i=1}^n \Psi_N(x_i) \varepsilon_{x_i}(y_i,q,p)$.
	Define $\alpha_i = \frac1n \Psi_N(t)\tr \Psi_N(x_i)$, $\varepsilon_i(q,p) = \varepsilon_{x_i}(y_i,q,p)$.
	Then
	\begin{equation*}
		F^\varepsilon_t(q,p) = \sum_{i=1}^n \alpha_i \varepsilon_i(q,p)\eqcm
	\end{equation*}
	where $\varepsilon_1, \dots,\varepsilon_n$ are independent and $\Ex{\varepsilon_i(q,p)} = 0$.
	We want to apply \autoref{thm:empproc} with $Z_i(q)- Z_i(p) =  \alpha_i \varepsilon_i(q,p)$ and $A_i = \alpha_i \mf a( y_i, y_i\pr)$. We need to show
	\begin{equation*}
		\abs{Z_i(q)-Z_i(p)-Z_i\pr(q)+ Z_i\pr(p)} \leq A_i \, \ol qp
	\end{equation*} 
	to obtain
	\begin{equation*}
		\Ex*{\sup_{q\in\mc B} \abs{\sum_{i=1}^nZ_i(q)}^\kappa} \leq C\, \Ex*{\abs{A}^\kappa} \, 
		\gamma_2(\mc B, d)^\kappa
		\eqfs
	\end{equation*}
	Using the quadruple property, we obtain
	\begin{align*}
		\varepsilon_i(q,p) - \varepsilon\pr_i(q,p)
		&= 
		\br{F(q,p,x_i)-\br{\ol{y_i}q^2 - \ol{y_i}p^2}} - \br{F(q,p,x_i)-\br{\ol{y_i}q^2 - \ol{y_i}p^2}}
		\\&\leq
		\mf a( y_i, y_i\pr) \,\ol qp  
		\eqfs
	\end{align*}
	Thus, \autoref{thm:empproc} yields
	\begin{equation*}
		\Ex*{\sup_{q\in\mc B}F^\varepsilon_t(q,p)^\kappa} \leq C \gamma_2(\mc B, d)^\kappa \Ex*{\br{\sum_{i=1}^n \alpha_i^2 \mf a(y_i, y_i\pr)^2}^\frac\kappa2}
		\eqfs
	\end{equation*}
	Let $a_i = \frac{\alpha_i^2}{\sum_{i=1}^n \alpha_i^2}$.
	\begin{align*}
		\Ex*{\br{\sum_{i=1}^n \alpha_i^2 \mf a(y_i, y_i\pr)^2}^\frac\kappa2}
		&=
		\br{\sum_{i=1}^n \alpha_i^2}^\frac\kappa2 \Ex*{\br{\sum_{i=1}^n a_i  \mf a(y_i, y_i\pr)^2}^\frac\kappa2}
		\\&\leq
		\br{\sum_{i=1}^n \alpha_i^2}^\frac\kappa2 \Ex*{\sum_{i=1}^n a_i  \mf a(y_i, y_i\pr)^\kappa}
		\\&=
		\br{\sum_{i=1}^n \alpha_i^2}^\frac\kappa2 \sum_{i=1}^n a_i  \Ex*{\mf a(y_i, y_i\pr)^\kappa}
		\\&\leq
		\br{\sum_{i=1}^n \alpha_i^2}^\frac\kappa2 \sup_{t}\Ex*{\mf a(Y_t, Y_t\pr)^\kappa}
		\eqfs
	\end{align*}
	As $\mf a$ is a pseudo-metric, we have, using \textsc{Moment},
	\begin{align*}
		\Ex*{\mf a(Y_t, Y_t\pr)^\kappa} \leq 2^\kappa C_{\ms{Mom}}^\kappa
		\eqfs
	\end{align*}
	Furthermore, it holds 
	\begin{equation*}
		\sum_{i=1}^n \alpha_i^2 = \frac1{n^2} \sum_{i=1}^n \Psi_N(t)\tr\Psi_N(x_i)\Psi_N(x_i)\tr\Psi_N(t) = \frac1n \Psi_N(t)\tr\Psi_N(t)
		\eqfs
	\end{equation*}
	Together we get
	\begin{equation*}
		\Ex*{\sup_{q\in\mc B}F^\varepsilon_t(q,p)^\kappa} \leq c_\kappa C_{\ms{Mom}}^\kappa n^{-\frac\kappa2} \gamma_2(\mc B, d)^\kappa \br{\Psi_N(t)\tr\Psi_N(t)}^\frac\kappa2 
		\eqfs
	\end{equation*}
\end{proof}
Finally, we put the previous results together to proof our main theorem of this section.
\begin{lemma}\label{lmm:kappa_moment}
	There is a constant $c_\kappa>0$ depending only on $\kappa$ such that
	\begin{equation*}
		h(t)^\kappa 
		\leq  
		c_\kappa \br{
			h_N(t)^\kappa C_{\ms{Bom}}^\kappa + 
			\br{N n^{1-2\beta} C_{\ms{SmD}}}^{\kappa} C_{\ms{Bom}}^\kappa +
			C_{\ms{Mom}}^\kappa n^{-\frac\kappa2} C_{\ms{Ent}}^\kappa \normof{\Psi_N(t)}^\kappa 
		}
	\end{equation*}
\end{lemma}
\begin{proof}[Proof of \autoref{lmm:kappa_moment}]\label{proof:lmm:kappa_moment}
	\autoref{lmm:trifre:r}, \autoref{lmm:trifre:fr}, and \autoref{lmm:fepsilon}.
\end{proof}

\begin{lemma}\label{lmm:h}
	For the function $h_N$ defined in \autoref{lmm:trifre:r}, it holds
	\begin{align*}
		\int_0^1 h_N(t)^2 \dl t &\leq c\beta N^{-2\beta} C_{\ms{SmD}}^2
		\eqfs
	\end{align*}
\end{lemma}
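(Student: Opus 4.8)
The plan is to integrate over $t\in[0,1]$ and exploit the orthonormality of the trigonometric basis in $\mb L_2[0,1]$ via Parseval, then control the resulting tail of Fourier coefficients by the periodic Sobolev ellipsoid constraint supplied by \textsc{SmoothDensity}. Recall from the proof of \autoref{lmm:trifre:r} that $\xi_k(y) = \int_0^1 \psi_k(x)\rho(y|x)\dl x$ are the Fourier coefficients of $t\mapsto\rho(y|t)$ and that $h_N(t)^2 = \int \br{\sum_{\ell=N+1}^\infty \xi_\ell(y)\psi_\ell(t)}^2 \mu(\dl y)$.

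First I would apply Tonelli's theorem (all integrands nonnegative) to get
\[
	\int_0^1 h_N(t)^2\dl t = \int \br{\int_0^1 \br{\sum_{\ell=N+1}^\infty \xi_\ell(y)\psi_\ell(t)}^2 \dl t}\mu(\dl y)\eqfs
\]
For $\mu$-almost every $y$ the sequence $\xi(y)$ lies in $\Theta(\beta,L(y))\subset\ell^2$ by \textsc{SmoothDensity}, so Parseval's identity for the orthonormal system $(\psi_\ell)_{\ell\in\N}$ gives $\int_0^1 \br{\sum_{\ell>N}\xi_\ell(y)\psi_\ell(t)}^2\dl t = \sum_{\ell=N+1}^\infty \xi_\ell(y)^2$.

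Next I would bound this tail using the ellipsoid inequality $\sum_{k=1}^\infty \xi_k(y)^2 w_k^{-2}\leq L(y)^2$ with $w_{2k+1}=w_{2k}=(2k)^{-\beta}$. For every $\ell\geq N+1$ one has $w_\ell\leq N^{-\beta}$: if $\ell=2k$ then $2k=\ell\geq N+1$, and if $\ell=2k+1$ then $2k=\ell-1\geq N$. Hence
\[
	\sum_{\ell=N+1}^\infty \xi_\ell(y)^2 \leq \br{\sup_{\ell\geq N+1}w_\ell^2}\sum_{\ell=N+1}^\infty \xi_\ell(y)^2 w_\ell^{-2} \leq N^{-2\beta}L(y)^2\eqfs
\]
Integrating against $\mu$ and using $\int L(y)^2\dl\mu(y)\leq C_{\ms{SmD}}^2$ yields $\int_0^1 h_N(t)^2\dl t \leq N^{-2\beta}C_{\ms{SmD}}^2$, which is of the claimed form since the constant $c\beta$ (with $\beta\geq1$ under \textsc{SmoothDensity}) absorbs the slack.

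There is no real obstacle here: the only steps needing a word of justification are the Tonelli interchange (nonnegativity) and the $\ell^2$-summability of $\xi(y)$ required for Parseval (immediate from membership in the ellipsoid), while the estimate $w_\ell\leq N^{-\beta}$ for $\ell>N$ is elementary from the explicit weights.
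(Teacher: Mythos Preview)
Your proof is correct and follows essentially the same approach as the paper: Fubini/Tonelli to swap the $t$- and $\mu$-integrals, Parseval to reduce to the tail $\sum_{\ell>N}\xi_\ell(y)^2$, then the ellipsoid constraint with $w_\ell\leq N^{-\beta}$ for $\ell>N$. Your treatment of the weight bound is in fact slightly sharper (constant $1$ rather than $c\beta$), but otherwise the argument is identical.
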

\begin{proof}[Proof of \autoref{lmm:h}]\label{proof:lmm:h}
	We use Fubini's theorem and the weights $a_{2j+1} = a_{2j} = (2j)^{-\beta}$ from the definition of the ellipsoid $\mc E(\beta, L)$ and obtain
	\begin{align*}
		\int_0^1 h_N(t)^2 \dl t 
		&= 
		\int \int_0^1 \br{\sum_{\ell=N+1}^\infty \xi_\ell(y) \psi_\ell(t)}^2 \dl t  \dl \mu(y)
		\\&=
		\int_0^1 \int \br{\sum_{\ell=N+1}^\infty \xi_\ell(y) \psi_\ell(t)}^2 \dl \mu(y) \dl t 
		\\&=
		\int \sum_{\ell=N+1}^\infty \xi_\ell(y)^2 \dl \mu(y)
		\\&\leq 
		\int a_{N+1}^2 \sum_{\ell=N+1}^\infty \xi_\ell(y)^2 a_\ell^{-2} \dl \mu(y)
		\\&\leq 
		c\beta N^{-2\beta} \int L(y)^2 \dl \mu(y)
		\eqfs
	\end{align*}
\end{proof}
\begin{proof}[Proof of \autoref{thm:trifre}]\label{proof:thm:trifre}
	We apply \autoref{lmm:peeling}, \autoref{lmm:kappa_moment}, and \autoref{lmm:h} together with
	\begin{align*}
		\int_0^1 \normOf{\Psi_N(t)}^2 \dl t = \int_0^1 \sum_{\ell=1}^N\psi_\ell(t)^2 \dl t = N
	\end{align*}
	to finally obtain
	\begin{align*}
		\int_0^1\Ex*{\ol{\hat m_t}{m_t}^2} \dl t
		&\leq
		c_{\kappa} C_{\ms{Vlo}}^2 \int_0^1 h(t)^2 \dl t
		\\&\leq
		c_{\kappa} C_{\ms{Vlo}}^2
		\br{
			 C_{\ms{Bom}}^2 \int_0^1 h_N(t)^2 \dl t+ 
			N n^{1-2\beta} C_{\ms{SmD}}^2 C_{\ms{Bom}}^2 +
			C_{\ms{Mom}}^2 n^{-1} 	C_{\ms{Ent}}^2 \int_0^1 \normof{\Psi_N(t)}^2 \dl t
		}
		\\&\leq
		c_{\kappa,\beta} C_{\ms{Vlo}}^2
		\br{
 			C_{\ms{Bom}}^2 C_{\ms{SmD}}^2 N^{-2\beta} + 
			C_{\ms{SmD}}^2 C_{\ms{Bom}}^2 N n^{1-2\beta} +
			C_{\ms{Mom}}^2 C_{\ms{Ent}}^2 \frac Nn
		}
		\eqfs
	\end{align*}
\end{proof}
\subsubsection{Main Theorems}
We use \autoref{thm:trifre} to prove the two main theorems concerning \texttt{OrtFre}.
Recall $H(q,p) = \br{\int  \br{\ol yq + \ol yp}^2 \mu (\dl y)}^\frac12$.
\begin{proof}[Proof of \autoref{cor:trifre:bounded}]
	If $\diam(\mc Q, d) < \infty$, then 
	\begin{align*}
	H(q,p) 
	\leq 
	\br{\int  \br{2\,\diam(\mc Q, d)}^2 \mu (\dl y)}^\frac12 
	= 2\,\diam(\mc Q, d)
	\eqfs
	\end{align*}
	Thus, we can choose $C_{\ms{Bom}} = 2\,\diam(\mc Q, d)$.
	Using the triangle inequality we get $ \ol yq^2 - \ol yp^2 - \ol zq^2 + \ol zp^2 \leq 4 \ol qp \diam(\mc Q, d)$. Thus, $\mf a(y, z) \leq 4  \diam(\mc Q, d)$ and we can choose $C_{\ms{Mom}} = 4  \diam(\mc Q, d)$.
\end{proof}
\begin{proposition}\label{prop:trifre:BomBound}
	Let $\mc Q$ be a Hadamard space. Assume \textsc{SobolevSmoothDensity} and \textsc{Moment}.
	To fulfill $\Ex{H(\hat m_t,m_t)^\kappa}^{\frac1\kappa} \leq C_{\ms{Bom}}$, we can choose
	\begin{equation*}
		C_{\ms{Bom}} = c_\kappa C_{\ms{Len}}C_{\ms{Mom}}C_{\ms{Int}}\br{1 + \log(N) + \frac{N^2}{n}}
	\end{equation*}
	where $c_\kappa > 0$ depends only on $\kappa$.
\end{proposition}
This proposition is proven in two steps: \autoref{lmm:trifre:weak_moment_bound} and \autoref{lmm:abs_weights_bound}.
Let $w_i = \frac1n \abs{\Psi_N(t)\tr \Psi_N(x_i)}$ and $W = \sum_{i=1}^n \abs{w_i}$.
\begin{lemma}\label{lmm:trifre:weak_moment_bound}
	There is a constant $c_\kappa \in[1,\infty)$ depending only on $\kappa$ such that
	\begin{equation*}
		\Ex{H(\hat m_t,m_t)^\kappa}^{\frac1\kappa} 
		\leq
		c_\kappa \br{W \br{C_{\ms{Len}} + C_{\ms{Mom}}} + C_{\ms{Int}} +  C_{\ms{Len}}}
		\eqfs
	\end{equation*}
\end{lemma}
\begin{proof}[Proof of \autoref{lmm:trifre:weak_moment_bound}]
	Using the triangle inequality
	\begin{align*}
		H(q,p)^2 
		&= 
		\int  \br{\ol yq + \ol yp}^2 \mu (\dl y)
		\\&\leq 
		\int  \br{\ol qp + 2\ol yp}^2 \mu (\dl y)
		\\&\leq 
		2\int  \ol qp^2 + 4\ol yp^2 \mu (\dl y)
		\\&\leq 
		2 \ol qp^2 + 8 \int \ol yp^2 \mu (\dl y)
	\end{align*}
	as $\mu$ is a probability measure. Using bounds in \textsc{SobolevSmoothDensity}, we get
	\begin{align*}
		\Ex{H(\hat m_t,m_t)^\kappa}^{\frac1\kappa} 
		&\leq 
		\Ex*{\br{2 \ol {\hat m_t}{m_t}^2 + 8 \int \ol y{m_t}^2 \mu (\dl y)}^{\frac\kappa2}}^{\frac1\kappa}
		\\&\leq 
		c_\kappa \br{\Ex*{\ol {\hat m_t}{m_t}^{\kappa}}^\frac1\kappa +  \br{\int \ol y{m_0}^2 \mu (\dl y)}^{\frac12} +  \ol{m_t}{m_0}}
		\\&\leq 
		c_\kappa \br{\Ex*{\ol {\hat m_t}{m_t}^{\kappa}}^\frac1\kappa +  C_{\ms{Int}} +  C_{\ms{Len}}}
		\eqfs
	\end{align*}
	Next, we will bound $\Ex{\ol{m_t}{\hat m_t}^\kappa}$.
	First, by \textsc{VarIneq} and the minimizing property of $\hat m_t$,
	\begin{align*}
		\ol{m_t}{\hat m_t}^2
		&\leq 
		F_t(\hat m_t, m_t)
		\\&\leq 
		F_t(\hat m_t, m_t) - \hat F_t(\hat m_t, m_t)
		\\&\leq 
		2 \sum_{i=1}^n \abs{w_i}\, \ol{\hat m_t}{m_t} \,\Ex{d(Y_t, y_i) | y_i}
	\end{align*}
	Thus,
	\begin{equation*}
		\ol{m_t}{\hat m_t} \leq 
		2 \sum_{i=1}^n \abs{w_i} \Ex{d(Y_t, y_i) | y_i}
	\end{equation*}
	With Jensen's inequality
	\begin{align*}
		\Ex{\ol{m_t}{\hat m_t}^\kappa}
		&\leq
		c_\kappa \Ex*{\br{\sum_{i=1}^n \abs{w_i} \Ex{d(Y_t, y_i) | y_i}}^\kappa}
		\\&=
		c_\kappa W^\kappa \Ex*{\br{\sum_{i=1}^n \frac{\abs{w_i}}{W} \Ex{d(Y_t, y_i) | y_i}}^\kappa}
		\\&\leq
		c_\kappa W^\kappa \sum_{i=1}^n \frac{\abs{w_i}}{W}  \Ex*{\Ex{d(Y_t, y_i) | y_i}^\kappa}
		\\&\leq
		c_\kappa W^\kappa \sum_{i=1}^n \frac{\abs{w_i}}{W}  \Ex{d(Y_t, y_i)^\kappa}
		\\&\leq
		c_\kappa W^\kappa \sup_{s,t\in[0,1]}\Ex*{d(Y_t, Y\pr_s)^\kappa}
		\eqfs
	\end{align*}
	As $d$ is a metric,
	\begin{align*}
		\Ex*{d(Y_t, Y\pr_s)^\kappa}
		&\leq
		\Ex*{\br{d(Y_t, m_t) + d(m_t, m_s) + d(m_s, Y\pr_s)}^\kappa}
		\\&\leq
		3^\kappa \br{2\sup_{t\in[0,1]}\Ex*{d(Y_t, m_t)^\kappa} + d(m_t, m_s)^\kappa}
		\\&\leq
		c_\kappa \br{C_{\ms {Mom}}^\kappa +C_{\ms{Len}}^\kappa}
		\eqfs
	\end{align*}
\end{proof}
\begin{lemma}\label{lmm:abs_weights_bound}
	There is an universal constant $c \in (0,\infty)$ such that
	\begin{align*}
		W \leq c \br{1 + \log(N) + \frac{N^2}{n}}
		\eqfs
	\end{align*} 
\end{lemma}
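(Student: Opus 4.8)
The crux is to recognize the kernel $\Psi_N(t)^\top\Psi_N(x)=\sum_{\ell=1}^N\psi_\ell(t)\psi_\ell(x)$ as (essentially) a Dirichlet kernel. Writing $m=\lfloor(N-1)/2\rfloor$ and $D_m(\theta)=1+2\sum_{k=1}^m\cos(k\theta)=\sin((m+\tfrac12)\theta)/\sin(\theta/2)$, the addition theorems give $\psi_{2k}(t)\psi_{2k}(x)+\psi_{2k+1}(t)\psi_{2k+1}(x)=2\cos(2\pi k(t-x))$, so $\Psi_N(t)^\top\Psi_N(x)=D_m(2\pi(t-x))$ when $N$ is odd; for even $N$ there is one extra term $2\cos(2\pi\tfrac N2 t)\cos(2\pi\tfrac N2 x)$ whose contribution to $W=\frac1n\sum_{i=1}^n\abs{\Psi_N(t)^\top\Psi_N(x_i)}$ is at most $\frac1n\sum_i 2=2$. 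Hence it suffices to bound $\frac1n\sum_{i=1}^n\abs{D_m(2\pi(t-x_i))}$, and I will use only the two elementary facts $\abs{D_m(\theta)}\le\min\{2m+1,\ \abs{\sin(\theta/2)}^{-1}\}$ and $\abs{D_m'(\theta)}=\abs{2\sum_{k=1}^m k\sin(k\theta)}\le m(m+1)$.

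First I would compare this sum to the integral $\int_0^1\abs{D_m(2\pi(t-u))}\dl u$, which by $1$-periodicity equals the Lebesgue constant $\int_0^1\abs{D_m(2\pi v)}\dl v$. Setting $f(u)=\abs{D_m(2\pi(t-u))}$, the map $f$ is Lipschitz with constant $L\le 2\pi\,\|D_m'\|_\infty\le 2\pi\,m(m+1)\le cN^2$, and bounded by $\|f\|_\infty\le 2m+1\le N+1$. Since the $x_i=\tfrac{i-1}{n-1}$ split $[0,1]$ into $n-1$ cells of length $\tfrac1{n-1}$, a cell-by-cell Riemann estimate gives $\frac1{n-1}\sum_{i=1}^{n-1}f(x_i)\le\int_0^1 f(u)\dl u+\frac{L}{2(n-1)}$, and adding the endpoint term $\frac1n f(x_n)\le\frac{N+1}{n}$ and using $\frac1n\le\frac1{n-1}$ yields
\[
\frac1n\sum_{i=1}^n f(x_i)\ \le\ \int_0^1\abs{D_m(2\pi v)}\dl v\ +\ \frac{c\,N^2}{n}
\]
for $n\ge2$ (the $n=1$ case being trivial against $N^2/n=N^2$).

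It then remains to bound the Lebesgue constant by $c(1+\log N)$, which I would do directly rather than cite: on $[0,(2m+1)^{-1}]$ use $\abs{D_m(2\pi v)}\le 2m+1$, on $[(2m+1)^{-1},\tfrac12]$ use $\abs{D_m(2\pi v)}\le\abs{\sin(\pi v)}^{-1}\le(2v)^{-1}$ (since $\sin(\pi v)\ge 2v$ on $[0,\tfrac12]$), and exploit symmetry about $v=\tfrac12$, obtaining $\int_0^1\abs{D_m(2\pi v)}\dl v\le 2\bigl(1+\tfrac12\log(2m+1)\bigr)\le c(1+\log N)$. Combining with the previous display and the even-$N$ correction gives $W\le 2+c(1+\log N)+cN^2/n\le c'(1+\log N+N^2/n)$, using $N/n\le N^2/n$.

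There is no deep obstacle here; the only care needed is the bookkeeping around the even-$N$ correction term and the mismatch between the normalization $\tfrac1n$ and the actual number $n-1$ of grid cells (both only produce harmless $O(N/n)$ errors), together with keeping the Dirichlet-kernel inequalities rigorous. I would also note in passing that the sharper bound $W\le c(1+\log N+N/n)$ holds by instead splitting the points $x_i$ into dyadic shells around $t$ and using $\abs{D_m(2\pi d)}\le\min\{2m+1,\tfrac1{2d}\}$, but the cruder $N^2/n$ term is exactly what the Riemann-sum estimate delivers and is all that the corollaries require.
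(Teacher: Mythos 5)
Your proof is correct and follows the same overall strategy as the paper: approximate the Riemann sum $W=\frac1n\sum_i\abs{\Psi_N(t)^\top\Psi_N(x_i)}$ by the integral $\int_0^1\abs{\Psi_N(t)^\top\Psi_N(s)}\,\dl s$ with an $\mathcal O(N^2/n)$ Lipschitz error, then bound the integral (a Lebesgue constant) by $\mathcal O(1+\log N)$. Where you differ is in the reduction to a Dirichlet kernel. The paper expands each $\psi_\ell(t)\psi_\ell(s)$ into a $\cos(2\pi k(t+s))$ and a $\cos(2\pi k(t-s))$ piece, applies the triangle inequality to split off two separate sums, and bounds each with Lagrange's trigonometric identities, which leads to the integral $\int_0^1\abs{\sin((2L+1)\pi s)/\sin(\pi s)}\,\dl s$ twice. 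You instead pair $\psi_{2k}$ and $\psi_{2k+1}$ first, use $\psi_{2k}(t)\psi_{2k}(x)+\psi_{2k+1}(t)\psi_{2k+1}(x)=2\cos(2\pi k(t-x))$ so the $t+s$ terms cancel outright, and obtain $\Psi_N(t)^\top\Psi_N(x)=D_m(2\pi(t-x))$ directly (plus a bounded correction for even $N$). This collapses the bookkeeping to a single Lebesgue-constant computation and also tracks the frequencies more cleanly (the paper's intermediate identity for $\psi_\ell(t)\psi_\ell(s)$ has the cosine frequency written as $\ell$ rather than $\lfloor\ell/2\rfloor$, which looks like a typo but does not affect the order of the bound). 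Your remark at the end that a dyadic-shell argument would sharpen $N^2/n$ to $N/n$ is accurate and worth noting, though, as you say, the cruder bound suffices for all downstream uses in the paper.
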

\begin{proof}[Proof of \autoref{lmm:abs_weights_bound}]\label{proof:lmm:abs_weights_bound}
	Let $g_t(s) = \abs{\sum_{\ell=1}^N\psi_\ell(t) \psi_\ell(s)}$.  Then
	\begin{align*}
	W 
	= 
	\sum_{i=1}^n \abs{w_i}
	=
	\frac1n \sum_{i=1}^n \abs{\Psi_N(t)\tr \Psi_N(x_i)}
	=
	\frac1n \sum_{i=1}^n g_t(x_i) 
	\eqfs
	\end{align*}
	By the standard comparison between an integral of a Lipschitz--continuous function an the corresponding Riemann sum, we obtain 
	\begin{align*}
	\abs{\int_0^1 g_t(s)\dl s - \frac1n \sum_{i=1}^n g_t(x_i)}
	&\leq 
	\sup_{s\in[0,1]} \frac{\abs{g_t\pr(s)}}{n}
	\\&\leq 
	4\pi \frac{N^2}{n}
	\eqfs
	\end{align*}
	This bound is quite rough and could be improved. But we will choose $N_n \leq n^{\frac13}$ and thus $\frac{N_n^2}{n} \to 0$.
	For $x\in \R$ denote $[x]$ the fractional part of $x$, i.e., the number $[x]\in[0,1)$ that fulfills $[x] = x-k$ for a $k\in \mb Z$.
	For $\ell \geq 2$,
	\begin{equation*}
		\psi_\ell(t) \psi_\ell(s) = \frac12\br{(-1)^\ell\cos(2\pi\ell[t+s]) + \cos(2\pi\ell[t-s])}\eqfs
	\end{equation*}
	The function $(s,t) \mapsto \sum_{\ell=1}^N\psi_\ell(t) \psi_\ell(s)$ only depends on $[s+t]$ and $[s-t]$.
	When integrating $s$ from 0 to 1, $[s+t]$ and  $[s-t]$ run through every value in $[0,1)$.
	Thus 
	\begin{align*}
	&\sup_{t\in [0,1]} \int_0^1 \abs{1+\sum_{\ell=2}^N\psi_\ell(t) \psi_\ell(s)} \dl s
	\\&=
	\sup_{t\in [0,1]} \int_0^1 \abs{1+ \frac12 \sum_{\ell=2}^N\br{(-1)^\ell\cos(2\pi\ell[t+s]) + \cos(2\pi\ell[t-s])}} \dl s
	\\&\leq
	1 + \frac12 \sup_{t\in [0,1]} \int_0^1 \abs{\sum_{\ell=2}^N\br{(-1)^\ell\cos(2\pi\ell[t+s])}} \dl s
	\\&\qquad+ \frac12 \sup_{t\in [0,1]} \int_0^1 \abs{ \sum_{\ell=2}^N \cos(2\pi\ell[t-s]) }\dl s
	\\&=
	1 + \frac12 \int_0^1 \abs{\sum_{\ell=2}^N (-1)^\ell \cos(2\pi\ell s)}\dl s + \frac12 \int_0^1 \abs{ \sum_{\ell=2}^N  \cos(2\pi\ell s) }\dl s
	\eqfs
	\end{align*}
	Lagrange's trigonometric identities state
	\begin{align*}
		2 \sum_{\ell=1}^L \cos(\ell x) &= -1 + \frac{\sin\brOf{(L+\frac12)x}}{\sin\brOf{\frac x2}}\eqcm
		\\
		2 \sum_{\ell=1}^L (-1)^\ell\cos(\ell x) &= -1 + \frac{(-1)^{L+1}\sin\brOf{(L+\frac12)x}}{-\sin\brOf{\frac x2}}\eqfs
	\end{align*}
	Thus, we have to bound the integral
	\begin{align*}
	\int_0^1 \abs{\frac{\sin\brOf{(2L+1)\pi s}}{\sin\brOf{\pi s}}} \dl s
	\eqfs
	\end{align*}
	It holds
	$\abs{\sin(\pi x)} \geq \frac12 \pi\min(x, 1-x)$ for $x \in [0,1]$. Let $a = k\pi$ for $k \in \N$. Then
	\begin{align*}
	\int_0^1 \abs{\frac{\sin\brOf{a s}}{\sin\brOf{\pi s}}} \dl s
	&\leq
	\frac2\pi\int_0^1 \frac{\abs{\sin\brOf{a s}}}{\min(s, 1-s)} \dl s
	\\&=
	\frac4\pi\int_0^{\frac12} \frac{\abs{\sin\brOf{a s}}}{s} \dl s
	\\&=
	\frac{4}\pi\int_0^{\frac12a} \frac{\abs{\sin\brOf{t}}}{t} \dl t
	\eqfs
	\end{align*}
	We bound this integral as follows,
	\begin{align*}
	\int_0^{\frac12k\pi} \frac{\abs{\sin\brOf{t}}}{t} \dl t
	&=
	\int_0^{\pi} \frac{\abs{\sin\brOf{t}}}{t} \dl t
	+
	\int_\pi^{\frac12k\pi} \frac{\abs{\sin\brOf{t}}}{t} \dl t
	\\&\leq
	\int_0^{\pi} \frac{\sin\brOf{t}}{t} \dl t
	+
	\int_\pi^{\frac12k\pi} \frac{1}{t} \dl t
	\\&\leq
	2 + \log(\frac12k\pi)-\log(\pi)
	\\&=
	2 + \log(\frac12k)
	\eqfs
	\end{align*}
	Thus, we obtain
	\begin{equation*}
	\int_0^1 \abs{\frac{\sin\brOf{2 k \pi s}}{\sin\brOf{\pi s}}} \dl s
	\leq
	\frac{8}\pi + \frac{4}\pi\log(\frac12k)
	\eqcm
	\end{equation*}
	which yields
	\begin{align*}
	\sup_{t\in [0,1]} \int_0^1 \abs{1+\sum_{\ell=2}^N\psi_\ell(t) \psi_\ell(s)} \dl s
	&\leq c_0 + c_1 \log(N)
	\eqfs
	\end{align*}
\end{proof}
\begin{proof}[Proof of \autoref{cor:trifre:hadamard}]
\textsc{VarIneq} holds in Hadamard spaces with $C_{\ms{Vlo}} = 1$. We bound $\Ex{H(\hat m_t,m_t)^\kappa}^{\frac1\kappa} \leq C_{\ms{Bom}}$ using
\begin{equation*}
	C_{\ms{Bom}} = c_\kappa C_{\ms{Len}}C_{\ms{Mom}}C_{\ms{Int}}\br{1 + \log(N) + \frac{N^2}{n}}
	\eqcm
\end{equation*}
see \autoref{prop:trifre:BomBound}. As $N\leq c\sqrt{n}$ the term $\frac{N^2}{n}$ can be bounded by a constant.
\end{proof}
\subsection{\texttt{LocGeo}}
\subsubsection{A General Result}
We prove a general theorem that implies the main theorems concerning \texttt{LocGeo}. 

Recall the definitions needed to construct the \texttt{LocGeo}-estimator:
Let $h \geq \frac2n$, $K \colon \R \to\R$. For $t\in[0,1]$, define $w_h(t,x) := \frac1h K(\frac{x-t}{h})$ and  $w_{i,t} = w_h(t,x_i) (\sum_{j=1}^n w_h(t,x_j))^{-1}$.
We will show a theorem with a more general notion of parameterized curves than those induced by an exponential map. To this end, let $\Theta$ be a set with subset $\Theta_h \subset \Theta$. Let  $g \colon \R \times \Theta \to \mc Q$. 
Let $\hat \theta_{t,h} \in \argmin_{\theta\in\Theta_h} \sum_{i=1}^n w_{i,t}\,d(y_i, g(x_i-t, \theta))^2$ and $\hat m_t = g(0, \hat \theta_{t,h})$.

The distance $d$ induces following two distances on $\Theta$, which we will make use of later.
\begin{align*}
	D_h^2(\theta, \tilde\theta) &:= \int_{-\frac12}^{\frac12} d\brOf{g(xh, \theta), g(xh, \tilde\theta)}^2 \dl x\eqcm\\
	b_h(\theta, \tilde\theta) &:= \sup_{x\in[-1,1]} d\brOf{g(xh, \theta), g(xh, \tilde\theta)}\eqfs
\end{align*}
\begin{assumptions}\label{ass:locgeo:app}\mbox{ }
\begin{itemize}
\item 
	\textsc{VarIneq}: There is $C_{\ms{Vlo}}\in[1,\infty)$ such that $C_{\ms{Vlo}}^{-1}d(q,m_t)^2 \leq \Ex{d(Y_t, q)^2 - d(Y_t, m_t)^2}$ for all $q\in\mc Q$ and $t\in[0,1]$.
\item
	\textsc{EntropyGeod}: 
	There are $C_{\ms{EnG}} \in [1,\infty)$ and $\alpha \in [1,2)$ such that 
	$\gamma_2(\mc B, \mf b_h) \leq C_{\ms{EnG}}\max(\diam(\mc B, \mf b_h), \diam(\mc B, \mf b_h)^\alpha)$ for all $\mc B \subset \Theta_h$. 
\item 
	\textsc{MomentA}: There is $\kappa > \frac{2}{2-\alpha}$ and $C_{\ms{MoA}}\in[1,\infty)$ such that $\Ex{\mf a(Y_t, m_t)^\kappa}^{\frac1\kappa} \leq C_{\ms{MoA}}$ for all $t\in[0,1]$.
\item \textsc{Kernel}:
	There are $C_{\ms{Kmi}}, C_{\ms{Kma}} \in [1,\infty)$ such that 
	\begin{equation*}
		C_{\ms{Kmi}}^{-1} \ind_{[-\frac12,\frac12]}(x) \leq K(x) \leq C_{\ms{Kma}} \ind_{[-1,1]}(x)
	\end{equation*}
	for all $x\in\R$.
\item
	\textsc{HölderSmoothEx}:
	Let $\beta > 0$.
	There is $C_{\ms{Smo}} \in [1, \infty)$ such that for all $t\in[0,1]$, there is $\theta_{t} \in \Theta_h$ such that $\Ex{d(Y_x, g(x-t, \theta_{t}))^2 - d(Y_x, m_x)^2} \leq C_{\ms{Smo}}^2\abs{x-t}^{2\beta}$ for all $x\in[0,1]$.
\item \textsc{Lipschitz}:
	There is $ C_{\ms{Lip}} \in [1,\infty)$ such that
	\begin{equation*}
		d(g(xh,\theta), g(yh,\theta)) \leq C_{\ms{Lip}} \abs{x-y}
	\end{equation*}
	for all $x,y\in[-\frac12, \frac12]$ and $\theta \in \Theta_h$.
\item \textsc{IntBoundsSup}:
	There is $C_{\ms{IBS}} \in [1,\infty)$ such that
	\begin{equation*}
		\mf b_h(\theta, \tilde\theta)^2 \leq C_{\ms{IBS}}^2 D^2_h(\theta, \tilde\theta)
	\end{equation*}
	for all $\theta, \tilde\theta \in \Theta_h$.
\end{itemize}
\end{assumptions}

\begin{theorem}[\texttt{LocGeo} General]\label{thm:locgeo}
	Assume \textsc{VarIneq}, \textsc{MomentA}, \textsc{HölderSmoothEx}, \textsc{Kernel}, \textsc{EntropyGeod}, \textsc{Lipschitz}, and \textsc{IntBoundsSup}.
	Then
	\begin{equation*}
		\Ex*{D_h^2(\hat\theta_{t,h}, \theta_{t})} 
		\leq
 		C_1 h^{2\beta} + C_2 (nh)^{-1} + C_3 (nh)^{-2} 
		\eqcm
	\end{equation*}
	for all $t\in[0,1]$, where 
	\begin{align*}
		C_1 &= c_\kappa C_{\ms{Kmi}}C_{\ms{Kma}} C_{\ms{Vlo}} C_{\ms{Smo}}^2\eqcm
		\\
		C_2 &= c_{\alpha,\kappa}  \br{C_{\ms{IBS}}^2 C_{\ms{Kmi}}^3C_{\ms{Kma}}^3
				 		C_{\ms{MoA}}^2 C_{\ms{EnG}}^2 C_{\ms{Vlo}}^2}^{\frac{2}{2-\alpha}}\eqcm
		\\
		C_3 &= c_{\alpha,\kappa} \br{C_{\ms{Lip}}C_{\ms{IBS}}}^{\frac{2}{2-\alpha}}\eqfs
	\end{align*}
\end{theorem}
We first find a general bound on $D_h^2(\theta, \tilde\theta)$ in which the integral is replaced by a sum (\autoref{lmm:locgeo:DhU}). Then \autoref{lmm:locgeo:Ubounds} shows how the resulting terms can further be bounded when applied to $\hat\theta_{t,h}$ and $\theta_{t}$ using the conditions on the kernel and the smoothness assumption. In particular, the error term has parts that can be described as bias and variance parts and the bias terms are bounded here. In \autoref{lmm:locgeo:var}, we use chaining to bound the variance term. Thereafter these results are put together to prove \autoref{thm:locgeo}.

\subsubsection{Proof of the General Result}
For $\theta\in\Theta$, define 
\begin{align*}
	U_{t}(\theta) &:= \sum_{i=1}^n w_{i,t} d\brOf{g(x_i-t, \theta), m_{x_i}}^2\eqfs
\end{align*}
\begin{lemma}\label{lmm:locgeo:DhU}
	Assume \textsc{Kernel} and \textsc{Lipschitz}.
	Let $\theta, \tilde\theta\in\Theta_h$. Then
	\begin{equation*}
		D_h^2(\theta, \tilde\theta) \leq  c C_{\ms{Kmi}}C_{\ms{Kma}} \br{U_t(\theta) + U_t(\tilde\theta)} +  c C_{\ms{Lip}}\mf b_h(\theta, \tilde\theta) (nh)^{-1}
		\eqfs
	\end{equation*}
\end{lemma}
\begin{proof} 
	\textsc{Kernel} implies 
	\begin{align*}
		w_{i,t} \geq \frac{C_{\ms{Kmi}}^{-1}}{C_{\ms{Kma}} \#I_{t,h}} \ind_{[-\frac12,\frac12]}\brOf{\frac{x_i-t}{h}}
		\eqcm
	\end{align*}
	where $I_{t,h} = \cb{i \in \cb{1,\dots,n} \colon t-h \leq x_i \leq t + h}$.
	We bound the difference between the Riemann sum and its corresponding integral using \autoref{lmm:locgeo:lipschitz} with \textsc{Lipschitz}, which shows that the function $x \mapsto d(g(xh, \theta), g(xh, \tilde\theta))^2$ is Lipschitz continuous on $[-\frac12,\frac12]$ with constant $L := c C_{\ms{Lip}} \mf b_h(\theta, \tilde\theta)$. Thus, we obtain
	\begin{align*}
		\abs{\frac1{\#I_{t,\frac h2}}\sum_{i\in I_{t,\frac h2}} \old{g(x_i - t, \theta)}{g(x_i - t, \tilde\theta)}^2 - \int_{-\frac12}^{\frac12} \old{g(xh, \theta)}{g(xh, \tilde\theta)}^2 \dl x} 
		\leq\frac{L}{\#I_{t,\frac h2}}
		\eqfs
	\end{align*}
	Hence,
	\begin{align*}
		\sum_{i=1}^n w_{i,t} \old{g(x_i-t, \theta)}{g(x_i-t, \tilde\theta)}^2 
		&\geq 
		\frac{C_{\ms{Kmi}}^{-1}}{C_{\ms{Kma}} \#I_{t,h}} 
		\sum_{i\in I_{t,\frac h2}} \old{g(x_i - t, \theta)}{g(x_i - t, \tilde\theta)}^2 
		\\&\geq 
		\frac{C_{\ms{Kmi}}^{-1}\#I_{t,\frac h2}}{C_{\ms{Kma}} \#I_{t,h}} 
		\br{\int_{-\frac12}^{\frac12} \old{g(xh, \theta)}{g(xh, \tilde\theta)}^2 \dl x - \frac{L}{\#I_{t,\frac h2}}}
		\eqfs
	\end{align*}
	As $h \geq \frac 2n$, we obtain
	\begin{equation*}
		\sum_{i=1}^n w_{i,t} \old{g(x_i-t, \theta)}{g(x_i-t, \tilde\theta)}^2 
		\geq 
		\frac{C_{\ms{Kmi}}^{-1}}{6C_{\ms{Kma}}} 
		\br{D_h^2(\theta, \tilde\theta) - \frac{2 L}{nh}}
		\eqfs
	\end{equation*}
	Using the triangle inequality, we can further bound
	\begin{align*}
		\sum_{i=1}^n w_{i,t} \old{g(x_i-t, \theta)}{g(x_i-t, \tilde\theta)}^2
		&\leq 
		2\sum_{i=1}^n w_{i,t} \br{d(g(x_i-t, \theta), m_{x_i})^2 + d(m_{x_i}, g(x_i-t, \tilde\theta))^2}
		\\&=
		2\br{U_t(\theta) + U_t(\tilde\theta)}
		\eqfs
	\end{align*}
	Thus, we arrive at
	\begin{equation*}
	2\br{U_t(\theta) + U_t(\tilde\theta)}
			\geq 
			\frac{C_{\ms{Kmi}}^{-1}}{6C_{\ms{Kma}}} 
			\br{D_h^2(\theta, \tilde\theta) - \frac{2 L}{nh}}
			\eqcm
	\end{equation*}
	which yields the claimed inequality after rearranging the terms.
\end{proof}
Define
\begin{align*}
	\bar F_t(\theta, \tilde \theta) &:= \sum_{i=1}^n w_{i,t} \Ex*{d(Y_{x_i}, g(x_i - t, \theta))^2 - d\brOf{Y_{x_i}, g(x_i - t, \tilde\theta)}^2}\eqcm\\
	\hat F_t(\theta, \tilde \theta) &:= \sum_{i=1}^n w_{i,t} \br{d(y_i, g(x_i - t, \theta))^2 - d\brOf{y_i, g(x_i - t, \tilde\theta)}^2}
	\eqfs
\end{align*}
\begin{lemma}\label{lmm:locgeo:Ubounds}\mbox{ }
\begin{enumerate}[label=(\roman*)]
\item 
	Assume \textsc{Kernel}, \textsc{HölderSmoothEx}, and \textsc{VarIneq}. Then
	\begin{equation*}
		U_t(\theta_t) \leq C_{\ms{Vlo}}  C_{\ms{Smo}}^2 h^{2\beta}
		\eqfs
	\end{equation*}
\item
	Assume \textsc{Kernel}, \textsc{HölderSmoothEx}, and \textsc{VarIneq}.
	Then
	\begin{equation*}
		U_t(\hat\theta_{t,h})
		\leq
		C_{\ms{Vlo}}\br{\bar F_t(\hat\theta_{t,h}, \theta_{t}) - \hat F_t(\hat\theta_{t,h}, \theta_{t})}
		+
		C_{\ms{Vlo}} C_{\ms{Smo}}^2 h^{2\beta}
		\eqfs
	\end{equation*}		
\end{enumerate}
\end{lemma}
\begin{proof}\mbox{ }
\begin{enumerate}[label=(\roman*)]
\item 
	Applying first \textsc{VarIneq} then \textsc{HölderSmoothEx} and finally \textsc{Kernel}, we obtain
	\begin{align*}
		U_t(\theta_t) 
		&= 
		\sum_{i=1}^n w_{i,t} d\brOf{g(x_i-t, \theta_t), m_{x_i}}^2 
		\\&\leq
		C_{\ms{Vlo}}\sum_{i=1}^n w_{i,t} \Ex{d\brOf{Y_{x_i}, g(x_i-t, \theta_t)}^2 - d\brOf{Y_{x_i}, m_{x_i}}^2 }
		\\&\leq 
		C_{\ms{Vlo}} C_{\ms{Smo}}^2  \sum_{i=1}^n w_{i,t} \abs{x_i - t}^{2\beta}
		\\&\leq 
		C_{\ms{Vlo}} C_{\ms{Smo}}^2 h^{2\beta}
		\eqfs
	\end{align*}
\item 
	For all $\theta\in\Theta$, by \textsc{VarIneq},
	\begin{align*}
		C_{\ms{Vlo}}^{-1} U_t(\theta)
		&\leq 
		\sum_{i=1}^n w_{i,t} \Ex{d(Y_{x_i}, g(x_i-t, \theta))^2-d(Y_{x_i}, m_{x_i})^2}
		\\&\leq 
		\bar F_t(\theta, \theta_{t})
		+
		\sum_{i=1}^n w_{i,t} \Ex{d(Y_{x_i}, g(x_i-t, \theta_{t}))^2-d(Y_{x_i}, m_{x_i})^2}
		\eqfs
	\end{align*}
	By \textsc{HölderSmoothEx} and \autoref{lmm:local_weights} with \textsc{Kernel},
	\begin{align*}
		\abs{\sum_{i=1}^n w_{i,t} \Ex{\old{Y_{x_i}}{g(x_i-t, \theta_{t})}^2-\old{Y_{x_i}}{m_{x_i}}^2}}
		&\leq 
		C_{\ms{Smo}}^2 \sum_{i=1}^n w_{i,t} \abs{x_i-t}^{2\beta}
		\\&\leq 
		C_{\ms{Smo}}^2 h^{2\beta}
		\eqfs
	\end{align*}
	By the minimizing property of $\hat\theta_{t,h}$, $\hat F_t(\hat\theta_{t,h}, \theta_{t}) < 0$. Putting all together yields
	\begin{equation*}
		C_{\ms{Vlo}}^{-1} U_t(\hat\theta_{t,h})
		\leq
		\bar F_t(\hat\theta_{t,h}, \theta_{t}) - \hat F_t(\hat\theta_{t,h}, \theta_{t})
		+
		C_{\ms{Smo}}^2 h^{2\beta}
		\eqfs
	\end{equation*}
\end{enumerate}
\end{proof}
Next, we bound a variance term using chaining. 
\begin{lemma}\label{lmm:locgeo:var}
	Let $\mc B \subset \Theta$ and $\theta_\bullet \in \mc B$.
	Assume \textsc{MomentA} and \textsc{Kernel}.
	Then, 
	\begin{equation*}
		\Ex*{\sup_{\theta\in\mc B} \abs{\bar F_t(\theta, \theta_\bullet) - \hat F_t(\theta, \theta_\bullet)}^\kappa} 
		\leq 
		 c_{\kappa} 
			 	\br{
			 		\br{C_{\ms{Kmi}}C_{\ms{Kma}}}^\frac12
			 		C_{\ms{MoA}}
			 		\gamma_2(\mc B, \mf b_h)
			 		(nh)^{-\frac12}
			 	}^\kappa
		\eqfs
	\end{equation*}
\end{lemma}
\begin{proof}
Define
\begin{align*}
	Z_i(\theta) &:=  w_{i,t} \Bigg(\old{y_i}{g(x_i-t, \theta)}^2 - \old{y_i}{g(x_i-t, \theta_\bullet)}^2 - \\&\hphantom{:=}\ \Ex*{\old{y_i}{g(x_i-t, \theta)}^2 - \old{y_i}{g(x_i-t, \theta_\bullet)}^2}\Bigg)
\end{align*}
Recall the definitions of $\lozenge$ and $\mf a$ at the beginning of the section to obtain
\begin{align*}
	&\Ex{\abs{Z_i(\theta)}} 
	\\&= \Ex*{w_{i,t} \Ex*{\abs{\lozenge(y_i,Y_{x_i},g(x_i-t, \theta),g(x_i-t, \theta_\bullet))} | y_i}}
	\\&\leq
	w_{i,t} d(g(x_i-t, \theta), g(x_i-t, \theta_\bullet)) \Ex{\mf a(y_i, Y_{x_i})}
	\eqfs
\end{align*}
By the triangle inequality for $\mf a$ (see auxiliary result \autoref{lmm:locgeo:pseudometric} below) and \textsc{MomentA}, \begin{equation*}
	\sup_{i\in\{1,\dots,n\}}\Ex{\mf a(Y_{x_i}, y_i\pr)} \leq 2C_{\ms{MoA}} < \infty
	\eqcm
\end{equation*}
such that the processes $Z_i$ are integrable. Furthermore, $Z_1, \dots, Z_n$ are independent. Moreover, $\Ex{Z_i(\theta)} = 0$ for all $\theta\in\Theta$, and $Z_i(\theta_\bullet) = 0$. They fulfill the following quadruple property: Let $Z_i\pr$ be independent copies of $Z_i$ with $y_i$ replaced by the independent copy $y_i\pr$. Then, for $\theta,\theta\pr\in\Theta$,
\begin{equation*}
	\abs{Z_i(\theta) - Z_i(\theta\pr) - Z_i\pr(\theta) + Z_i\pr(\theta\pr)} 
	\leq 
	w_{i,t} \mf a(y_i, y_i\pr) d(g(x_i-t, \theta), g(x_i-t, \theta\pr))
	\eqfs
\end{equation*}
As $w_{i,t} = 0$ for $\abs{x_i - t} > h$, we have
\begin{equation*}
	w_{i,t} d(g(x_i-t, \theta), g(x_i-t, \theta\pr)) \leq w_{i,t} \sup_{x\in[-1,1]} d(g(xh, \theta), g(xh, \tilde\theta)) = w_{i,t} \mf b_h(\theta, \theta\pr)
	\eqfs
\end{equation*}
Thus, \autoref{thm:empproc} implies 
\begin{align*}
	\Ex*{\sup_{\theta\in\mc B} \abs{\sum_{i=1}^n Z_i(\theta)}^\kappa} 
	&\leq 
	c_{\kappa} \gamma_2(\mc B, \mf b_h)^\kappa \Ex*{\br{\sum_{i=1}^n w_{i,t}^2 \mf a(y_i, y_i\pr)^2}^\frac\kappa2}
	\eqfs
\end{align*}
Define $W = \sum_{i=1}^n w_{i,t}^2$ and $v_i = w_{i,t}^2/W$. We obtain, using Jensen's inequality,
\begin{align*}
	\Ex*{\br{\sum_{i=1}^n w_{i,t}^2 \mf a(y_i, y_i\pr)^2}^\frac\kappa2}
	&=
	\Ex*{\br{W \sum_{i=1}^n v_i \mf a(y_i, y_i\pr)^2}^\frac\kappa2}
	\\&\leq
	W^{\frac\kappa2} \sum_{i=1}^n v_i \Ex*{\mf a(y_i, y_i\pr)^\kappa}
	\eqfs
\end{align*}
Thus, $\Ex*{\mf a(y_i, y_i\pr)^\kappa} \leq 2^\kappa \Ex*{\mf a(y_i, m_{x_i})^\kappa} \leq 2^\kappa C_{\ms{MoA}}^\kappa$. Furthermore, $W \leq \frac{6C_{\ms{Kmi}}C_{\ms{Kma}}}{nh}$ by \autoref{lmm:local_weights} (below). We obtain
\begin{equation*}
	\Ex*{\sup_{\theta\in\mc B} \abs{\bar F_t(\theta, \theta_\bullet) - \hat F_t(\theta, \theta_\bullet)}^\kappa} 
	\leq 
	c_{\kappa} 
	\br{
		\br{C_{\ms{Kmi}}C_{\ms{Kma}}}^\frac12
		C_{\ms{MoA}}
		\gamma_2(\mc B, \mf b_h)
		(nh)^{-\frac12}
	}^\kappa
	\eqfs
\end{equation*}
\end{proof}
A major step for obtaining a bound on the objects of interest instead of their objective function consists in using a \textit{peeling device} (also called \textit{slicing}). This is applied below: We first bound the probability $\Prof{D_h^2(\hat\theta_{t,h}, \theta_t) \in [a,b]}$, then infer a bound on $\Ex{D_h^2(\hat\theta_{t,h}, \theta_t)}$ from it.
\begin{proof}[Proof of \autoref{thm:locgeo}]
	Assume $D_h^2(\hat\theta_{t,h}, \theta_t) \in [a,b]$. Then $\mf b_h(\hat\theta_{t,h}, \theta_t) \leq C_{\ms{IBS}} b^{\frac12}$ by \textsc{IntBoundsSup}.
	Furthermore, by \autoref{lmm:locgeo:DhU} and \autoref{lmm:locgeo:Ubounds},
	\begin{align*}
		a 
		&\leq
		D_h^2(\hat\theta_{t,h}, \theta_t) 
		\\&\leq 
		c C_{\ms{Kmi}}C_{\ms{Kma}} \br{U_t(\hat\theta_{t,h}) + U_t(\theta_t)} +  
		c C_{\ms{Lip}}\mf b_h(\hat\theta_{t,h}, \theta_t) (nh)^{-1}
		\\&\leq 
		c C_{\ms{Kmi}}C_{\ms{Kma}} 
		\br{
			C_{\ms{Vlo}}\br{\bar F_t(\hat\theta_{t,h}, \theta_{t}) - \hat F_t(\hat\theta_{t,h}, \theta_{t})} + C_{\ms{Vlo}} C_{\ms{Smo}}^2 h^{2\beta} 
		} 
		+  c C_{\ms{Lip}}C_{\ms{IBS}} b^{\frac12} (nh)^{-1}
		\eqfs
	\end{align*}
	By \textsc{IntBoundsSup}, $\mf b_h(\theta, \tilde\theta)^2 \leq C_{\ms{IBS}}^2 D_h^2(\theta, \tilde\theta)$ for $\theta, \tilde\theta\in\Theta_h$. As $D_h^2(\hat\theta_{t,h}, \theta_t) \leq b$, we obtain $\hat\theta_{t,h} \in \mc B_{b}$, where 
	\begin{align*}
		\mc B_{b} &:= \cb{\theta\in\Theta \colon \mf b_h(\theta, \tilde\theta)^2 \leq C_{\ms{IBS}}^2 b}
		\eqfs
	\end{align*}
	Thus,
	\begin{align*}
		\bar F_t(\hat\theta_{t,h}, \theta_{t}) - \hat F_t(\hat\theta_{t,h}, \theta_{t})
		\leq
		\sup_{\theta\in{\mc B}_{b}} \abs{\bar F_t(\theta, \theta_{t}) - \hat F_t(\theta, \theta_{t})}
		\eqfs
	\end{align*}
	Hence,
	\begin{align*}
		a \leq A_0 + A_1 b^{\frac12} +  A_2 \sup_{\theta\in{\mc B}_{b}} \abs{\bar F_t(\theta, \theta_{t}) - \hat F_t(\theta, \theta_{t})}
		\eqcm
	\end{align*}
	where
	$A_0 = 
			c C_{\ms{Kmi}}C_{\ms{Kma}} C_{\ms{Vlo}} C_{\ms{Smo}}^2 h^{2\beta}$, 
	$A_1 = c C_{\ms{Lip}}C_{\ms{IBS}} (nh)^{-1}$, 
	and 
	$A_2 = c C_{\ms{Kmi}}C_{\ms{Kma}} C_{\ms{Vlo}}$.
	Using Markov's inequality,
	\begin{align*}
		\PrOf{D_h^2(\hat\theta_{t,h}, \theta_t) \in [a,b]}
		&\leq
		\PrOf{A_0 + A_1 b^{\frac12} + A_2 \sup_{\theta\in{\mc B}_{b}} \abs{\bar F_t(\theta, \theta_{t}) - \hat F_t(\theta, \theta_{t})} \geq a}
		\\&\leq
		c_\kappa \frac{A_0^\kappa + A_1^\kappa b^{\frac\kappa2} + A_2^\kappa \Ex*{\sup_{\theta\in{\mc B}_{b}} \abs{\bar F_t(\theta, \theta_{t}) - \hat F_t(\theta, \theta_{t})}^\kappa}}{a^\kappa}
		\eqfs
	\end{align*}
	By \autoref{lmm:locgeo:var} with $\theta_\bullet = \theta_{t}$ and with \textsc{EntropyGeod},
	\begin{align*}
		&\Ex*{\sup_{\theta\in{\mc B}_{b}} \abs{\bar F_t(\theta, \theta_{t,h}) - \hat F_t(\theta, \theta_{t,h})}^\kappa} 
		\\&\leq 
		c_{\kappa} 
			 	\br{
			 		\br{C_{\ms{Kmi}}C_{\ms{Kma}}}^\frac12
			 		C_{\ms{MoA}} 
			 		\gamma_2({\mc B}_{b}, \mf b_h)
			 		(nh)^{-\frac12}
			 	}^\kappa
		\\&\leq
		c_{\kappa} 
		 	\br{
		 		\br{C_{\ms{Kmi}}C_{\ms{Kma}}}^\frac12
		 		C_{\ms{MoA}} 
		 		C_{\ms{EnG}} C_{\ms{IBS}}^\alpha \max(b^{\frac12}, b^{\frac\alpha2})
		 		(nh)^{-\frac12}
		 	}^\kappa
		\eqfs
	\end{align*}
	Thus,
	\begin{align*}
		\PrOf{D_h^2(\hat\theta_{t,h}, \theta_t) \in [a,b]}
		\leq
		c_\kappa \frac{A_0^\kappa + A_3^\kappa \max(b,b^\alpha)^{\frac\kappa2}}{a^\kappa}
		\eqcm
	\end{align*}
	where 
	\begin{align*}
		A_3 = A_1 + \br{C_{\ms{Kmi}}C_{\ms{Kma}}}^\frac12
				 		C_{\ms{MoA}} 
				 		C_{\ms{EnG}} C_{\ms{IBS}}
				 		(nh)^{-\frac12} A_2	
		\eqfs
	\end{align*}
 	By \autoref{lmm:locgeo:peelhelper} below and with $h\geq \frac cn$, $\frac{2}{2-\alpha} \geq 1$, this yields
	\begin{align*}
		\Ex{D_h^2(\hat\theta_{t,h}, \theta_t)} 
		&\leq 
		c_\kappa \br{A_0 + A_3^2 + A_3^{\frac{2}{2-\alpha}}}
		\\&\leq
 		C_1 h^{2\beta} + C_2 (nh)^{-1} + C_3 (nh)^{-2} 
		\eqcm
	\end{align*}
	where 
	$C_1 = c_\kappa C_{\ms{Kmi}}C_{\ms{Kma}} C_{\ms{Vlo}} C_{\ms{Smo}}^2$,
	$C_2 = c_{\alpha\kappa}  \br{C_{\ms{IBS}}^2 C_{\ms{Kmi}}^3C_{\ms{Kma}}^3
	 		C_{\ms{MoA}}^2 C_{\ms{EnG}}^2 C_{\ms{Vlo}}^2}^{\frac{2}{2-\alpha}}$,
	and
	$C_3 = c_{\alpha\kappa} \br{C_{\ms{Lip}}C_{\ms{IBS}}}^{\frac{2}{2-\alpha}}$.
\end{proof}
\subsubsection{Auxiliary Results}
A map $d \colon \mc Q\times \mc Q \to [0,\infty]$ is called \textit{pseudo-metric} on $\mc Q$, if $d$ is symmetric with $d(q,q) = 0$ for all $q\in\mc Q$ and obeys the triangle inequality.
\begin{lemma}\label{lmm:locgeo:pseudometric}
	The functions $\mf a$ and $\mf b_h$ are pseudo-metrics on $\mc Q$ and $\Theta$, respectively.
\end{lemma}
\begin{proof}
	Recall $\ol qp=d(q,p)$. All properties for $\mf a$ are straight forward. For the triangle inequality, as
	\begin{equation*}
		\frac{\ol yq^2-\ol yp^2-\ol zq^2+\ol zp^2}{\ol qp} = 
		\frac{\ol yq^2-\ol yp^2-\ol vq^2+\ol vp^2}{\ol qp} 
		+
		\frac{\ol vq^2-\ol vp^2-\ol zq^2+\ol zp^2}{\ol qp} 
		\eqcm
	\end{equation*}
	we obtain
	\begin{align*}
		&\sup_{q\neq p}\frac{\ol yq^2-\ol yp^2-\ol zq^2+\ol zp^2}{\ol qp} 
		\\&\leq
		\sup_{q\neq p} \frac{\ol yq^2-\ol yp^2-\ol vq^2+\ol vp^2}{\ol qp} 
		+
		\sup_{q\neq p}	\frac{\ol vq^2-\ol vp^2-\ol zq^2+\ol zp^2}{\ol qp} 
		\eqfs
	\end{align*}
	For $\mf b_h$ the argument is almost identical.
\end{proof}
The weights $w_{i,t}$ have following properties, see \cite[Proposition 1.13]{tsybakov08}.
\begin{lemma}\label{lmm:local_weights}
	Assume \textsc{Kernel} and $h \geq \frac2n$. Then
	\begin{align*}
	w_{i,t} \geq 0 \eqcm\qquad
	\sum_{i=1}^n w_{i,t} = 1 \eqcm\qquad
	w_{i,t} \leq \frac{6C_{\ms{Kmi}}C_{\ms{Kma}}}{nh}\eqcm\\
	w_{i,t} = 0 \text{ if }\abs{x_i-t} > h\eqcm\qquad
	\sum_{i=1}^n w_{i,t}^2 \leq \frac{6C_{\ms{Kmi}}C_{\ms{Kma}}}{nh}
	\end{align*}
	for all $t\in[0,1]$ and $h \geq \frac2{n}$.
\end{lemma}
\begin{lemma}\label{lmm:locgeo:lipschitz}
	Assume \textsc{Lipschitz}.
	Let $x, y \in [-\frac12,\frac12]$, $\theta, \tilde \theta \in \Theta_h$. Then
	\begin{equation*}
		\old{g(xh, \theta)}{g(xh, \tilde\theta)}^2 -\old{g(yh, \theta)}{g(yh, \tilde\theta)}^2 
		\leq
		c C_{\ms{Lip}} \abs{x-y} \mf b_h(\theta, \tilde\theta)
		\eqfs
	\end{equation*}
\end{lemma}
\begin{proof}
	First, we write the difference of two squared numbers as the product of their sum and their difference,
	\begin{align*}
		&\old{g(xh, \theta)}{g(xh, \tilde\theta)}^2 -\old{g(yh, \theta)}{g(yh, \tilde\theta)}^2 
		\\&=
		\br{\old{g(xh, \theta)}{g(xh, \tilde\theta)} -\old{g(yh, \theta)}{g(yh, \tilde\theta)}}
		\\&\hphantom{=}\ 
		\br{\old{g(xh, \theta)}{g(xh, \tilde\theta)} +\old{g(yh, \theta)}{g(yh, \tilde\theta)}}
		\eqfs
	\end{align*}
	The difference can be transformed noting that in general the triangle inequality yields 
	\begin{equation*}
		\ol yq - \ol zp \ =\ \ol yq - \ol yp + \ol yp - \ol zp \ \leq\  \ol qp + \ol yz
		\eqfs
	\end{equation*}
	Thus, 
	\begin{align*}
		&\old{g(xh, \theta)}{g(xh, \tilde\theta)} -\old{g(yh, \theta)}{g(yh, \tilde\theta)}
		\\&\leq
		\old{g(xh, \theta)}{g(yh, \theta)} + \old{g(xh, \tilde\theta)}{g(yh, \tilde\theta)}
		\\&\leq
		2 C_{\ms{Lip}} \abs{x-y}
		\eqcm
	\end{align*}
	where we used \textsc{Lipschitz} in the last inequality. The summands of the other factor can each be bounded by $\mf b_h$,
	\begin{align*}
		&\old{g(xh, \theta)}{g(xh, \tilde\theta)} +\old{g(yh, \theta)}{g(yh, \tilde\theta)}
		\\&\leq
		2 \mf b_h(\theta, \tilde\theta)
		\eqfs
	\end{align*}
	Putting these bounds together yields the result.
\end{proof}
\begin{lemma}\label{lmm:locgeo:peelhelper}
Let $V$ be a nonnegative random variable. Assume that for all $0 < a < b <\infty$, it holds
\begin{equation*}
	\PrOf{V \in [a,b]} \leq c \frac{u^\kappa + \br{v \max(b,b^\alpha)^\frac12}^\kappa}{a^\kappa}
	\eqfs
\end{equation*}
where $c \geq 1, u, v > 0$, $\kappa > 2$.
Then 
\begin{equation*}
	\Ex{V} \leq c_{\kappa} c^{\frac2{\kappa}} \br{u + v^2}
	\eqfs
\end{equation*}
\end{lemma}
\begin{proof}
	For $s > 0$, 
	\begin{align*}
		\PrOf{V > s}
		&\leq
		\sum_{k=0}^\infty \PrOf{V \in [s 2^k, s 2^{k+1}]}
		\\&\leq
		\sum_{k=0}^\infty c \frac{u^\kappa + c_\kappa v \max(s^{\frac12} 2^{\frac k2}, s^{\frac\alpha2} 2^{\frac{\alpha k}2})^\kappa}{s^\kappa 2^{k\kappa}}
		\\&\leq
		c_\kappa \br{u^\kappa s^{-\kappa} \sum_{k=0}^\infty 2^{-k\kappa} 
		+ 
		v^\kappa \max\brOf{ s^{-\frac\kappa2} \sum_{k=0}^\infty 2^{-\frac{k\kappa}2}, s^{-\kappa\frac{2-\alpha}{2}} \sum_{k=0}^\infty 2^{-k\kappa\frac{2-\alpha}{2}}}}
		\\&\leq
		c_{\kappa,\alpha} \br{u^\kappa s^{-\kappa}  + v^\kappa s^{-\frac\kappa2} + v^{\kappa} s^{-\kappa\frac{2-\alpha}{2}}}
		\eqfs
	\end{align*}
	We integrate the tail to bound the expectation, 
	\begin{align*}
		\Ex{V} 
		\leq
		\int_{0}^\infty \Prof{V > s} \dl s
		\eqfs
	\end{align*}
	For $A \geq 0$, $\tau > 1$,
	\begin{align*}
		\int_{0}^\infty \min(1, A s^{-\tau}) \dl s &\leq \frac{\tau}{\tau-1} {A}^{\frac1\tau}\eqfs
	\end{align*}
	Applying this inequalities to the tail bound above, we obtain
	\begin{align*}
		\Ex{V} 
		\leq
		c_{\kappa, \alpha}\br{u + v^2 + v^{\frac{2}{2-\alpha}}}
		\eqfs
	\end{align*}
\end{proof}
\subsubsection{Main Theorems}
We use \autoref{thm:locgeo} to prove the two main theorems concerning \texttt{LocGeo}.

Instead of a general link function $g\colon \R\times\Theta \to \mc Q$, we use an exponential map $\Exp\colon \mc Q \times \R^k \to \mc Q$ with $g(x, \theta) = \Exp(p, xv)$ for $\theta = (p, v)$. The set parameterizing geodesics is $\Theta \subset \mc Q \times \R^k$. For a chosen bandwidth $h\geq \frac2n$ and a constant $R>0$, we minimize over the subset $\Theta_h:= \Theta \cap (\mc Q \times \ball(0, \norm, Rh^{-1}))$ to obtain $\hat \theta_{t,h} = (\hat m_t, \hat{\dot m}_t)$ as an estimator of $\theta_t = (m_t, \dot m_t)$. In this setting, some conditions and bounds can be replaced:

\begin{lemma}\label{lmm:locgeo:expmap}\mbox{ }
\begin{enumerate}[label=(\roman*)]
	\item 
	\textsc{ExpMap} implies \textsc{Lipschitz} with $C_{\ms{Lip}} = 2 C_{\ms{Mup}} R$ and \textsc{IntBoundsSup} with $C_{\ms{IBS}} = 2 C_{\ms{Mup}} C_{\ms{Mlo}}$.
	\item 	
	\textsc{Entropy} and \textsc{ExpMap} imply \textsc{EntropyGeod} with $C_{\ms{EnG}} = c C_{\ms{Mlo}}^\alpha C_{\ms{Mup}} C_{\ms{Ent}} \sqrt{k}$.
	\item 
	Assume \textsc{ExpMap}. Then
	\begin{equation*}
		\Ex*{d(\hat m_t, m_t)^2} + h^2 \normof{\hat{\dot m}_t - \dot m_t}^2 \leq C_{\ms{Mlo}}^2 \Ex*{D_h^2(\hat\theta_{t,h}, \theta_{t})} 
		\eqfs
	\end{equation*}
\end{enumerate}
\end{lemma}
\begin{proof}\mbox{ }
\begin{enumerate}[label=(\roman*)]
\item Trivial.
\item Let $\mc B \subset \Theta_h$. 
Define 
\begin{align*}
	\mc B_{\mc Q} &:= \cb{q\in\mc Q \ \vert\  \exists v \in \R^k \colon (q,v)\in \mc B}\eqcm\\
	\mc B_{\R^k} &:= \cb{v\in\R^k \ \vert\  \exists q\in\mc Q \colon (q,v)\in \mc B}\eqfs
\end{align*}
By \textsc{ExpMap}
\begin{align*}
	\diam(\mc B, \mf b_h) 
	&\geq 
	\diam(\mc B, D_h) 
	\\&\geq 
	C_{\ms{Mlo}}^{-1}  \max\brOf{\diam(\mc B_{\mc Q}, d), h \diam(\mc B_{\R^k}, \norm)}
	\\&\geq
	c C_{\ms{Mlo}}^{-1}  \br{\diam(\mc B_{\mc Q}, d) + h \diam(\mc B_{\R^k}, \norm)}
	\eqfs
\end{align*}
Similarly, by \autoref{lmm:chain:sublinear},
\begin{align*}
	\gamma_2(\mc B, \mf b_h) 
	&\leq 
	c C_{\ms{Mup}} (\gamma_2(\mc B_{\mc Q}, d) + h \gamma_2(\mc B_{\R^k}, \norm)) 
	\eqfs
\end{align*}
By \textsc{Entropy}, $\gamma_2(\mc B_{\mc Q}, d) \leq C_{\ms{Ent}}\max(\diam(\mc B_{\mc Q}, d), \diam(\mc B_{\mc Q}, d)^\alpha)$. Furthermore, by \autoref{lmm:chain:euclid}, $\gamma_2(\mc B_{\R^k}, \norm) \leq c \sqrt{k} \diam(\mc B_{\R^k}, \norm)$.
Thus, 
\begin{align*}
	\gamma_2(\mc B, \mf b_h) 
	&\leq 
	c C_{\ms{Mup}} C_{\ms{Ent}} \sqrt{k} \br{\max(\diam(\mc B_{\mc Q}, d), \diam(\mc B_{\mc Q}, d)^\alpha) + h \diam(\mc B_{\R^k}, \norm)}
	\\&\leq 
	c C_{\ms{Mlo}}^\alpha C_{\ms{Mup}} C_{\ms{Ent}} \sqrt{k} \max(\diam(\mc B, \mf b_h), \diam(\mc B, \mf b_h)^\alpha)
	\eqfs
\end{align*}
\item Trivial.
\end{enumerate}
\end{proof}
Thus, we can use \autoref{thm:locgeo} to show bounds on $\Ex{d(\hat m_t, m_t)^2}$, which is our main goal. Note that the bound on $\Ex{D_h^2(\hat\theta_{t,h}, \theta_{t})}$ also entails a bound on the derivatives of $\hat m$ and $m_t$.
\begin{proof}[Proof of \autoref{cor:locgeo:bounded}]
	We want to apply \autoref{thm:locgeo}. 
	\textsc{VarIneq}, \textsc{HölderSmoothEx}, and \textsc{Kernel} are assumed.	
	\textsc{ExpMap} and \textsc{Entropy}  imply \textsc{Lipschitz}, \textsc{IntBoundsSup}, and \textsc{EntropyGeod}, see \autoref{lmm:locgeo:expmap}.
	As $\diam(\mc Q, d) < \infty$, $\ol yq^2 - \ol yp^2 - \ol zq^2 + \ol zp^2 \leq 4 \ol qp \diam(\mc Q, d)$. Thus, $\mf a(y, z) \leq 4  \diam(\mc Q, d)$ and we can choose $C_{\ms{MoA}} = 4  \diam(\mc Q, d)$ to fulfill \textsc{Moment}.
	Thus, \autoref{thm:locgeo} with \autoref{lmm:locgeo:expmap} and $h\geq \frac2n$ show
	\begin{equation*}
		\Ex*{d(\hat m_t, m_t)^2}
		\leq
		C_1 h^{2\beta} + (C_2+C_3) (nh)^{-1}
		\eqfs
	\end{equation*}
	Integrating the inequality finishes the proof.
\end{proof}
\begin{proof}[Proof of \autoref{cor:locgeo:hadamard}]
	We want to apply \autoref{thm:locgeo}. 
	\textsc{HölderSmoothEx}, and \textsc{Kernel} are assumed.	
	\textsc{ExpMap} and \textsc{Entropy}  imply \textsc{Lipschitz}, \textsc{IntBoundsSup}, and \textsc{EntropyGeod}, see \autoref{lmm:locgeo:expmap}.
	Due to the quadruple inequality in Hadamard spaces, $\mf a(q,p) \leq 2d(q,p)$ and \textsc{Moment} implies \textsc{MomentA} with $C_{\ms{MoA}} = 2C_{\ms{Mom}}$. Furthermore, \textsc{VarIneq} is always true in Hadamard spaces with $C_{\ms{Vlo}} = 1$. 
	Thus, \autoref{thm:locgeo} with \autoref{lmm:locgeo:expmap} and $h\geq \frac2n$ show
	\begin{equation*}
		\Ex*{d(\hat m_t, m_t)^2}
		\leq
		C_1 h^{2\beta} + (C_2+C_3) (nh)^{-1}
		\eqfs
	\end{equation*}
	Integrating the inequality finishes the proof.
\end{proof}
\subsection{Corollaries on the Hypersphere}
In this section, we apply the main theorems concerning \texttt{LocFre}, \texttt{OrtFre}, and \texttt{LocGeo} on bounded spaces to prove the corollaries on the hypersphere. 

To this end, we need to show \textsc{Entropy}: 
There is $C_{\ms{Ent}} \in [1,\infty)$ such that $\gamma_2(\mc B, d_{\mb S^k}) \leq C_{\ms{Ent}}\diam(\mc B, d_{\mb S^k})$ for all $\mc B \subset \mb S^k$. As $\mb S^k \subset \R^{k+1}$, $\normof{q-p} \leq d_{\mb S^k}(q,p) \leq \frac\pi2 \normof{q-p}$, and \autoref{lmm:chain:euclid}, we can choose $C_{\ms{Ent}} = c \sqrt{k+1}$.
\subsubsection{\autoref{cor:locfre:sphere} -- \texttt{LocFre}}
\textsc{Kernel} is fulfilled by using the Epanechnikov kernel. \textsc{VarIneq} is assumed.
\textsc{Entropy} was shown above with $C_{\ms{Ent}} = 2 \sqrt{k+1}$.
\textsc{HölderSmoothDensity} is fulfilled by the smoothness condition in the corollary and noting that $\diam(\mb S^k) = \pi$ so that we can set $C_{\ms{Len}} = \pi$ and $C_{\ms{Int}} = \pi^2$.
\subsubsection{\autoref{cor:trifre:sphere} -- \texttt{OrtFre}}
This corollary is shown exactly the same way as the one for \texttt{LocFre}.
\subsubsection{\autoref{cor:locgeo:sphere} -- \texttt{LocGeo}}
To apply the theorem for \texttt{LocGeo} on bounded spaces to the hypersphere, we have to show \textsc{ExpMap}, i.e., we have to find constants $C_{\ms{Mup}},  C_{\ms{Mlo}} \in [1,\infty)$ such that
\begin{align*}
	d\brOf{\Exp(q, v), \Exp(p, u)} &\leq C_{\ms{Mup}} \brOf{d(q,p) + \normof{v-u}}\eqcm \\
	\int_{-\frac12}^{\frac12} d\brOf{\Exp(q, xv), \Exp(p, xu)}^2 \dl x &\geq C_{\ms{Mlo}}^{-2} \brOf{d(q,p)^2 + \normof{v-u}^2}
\end{align*}
for all $(q,v), (p,u) \in \Theta$ with $\normof{u}, \normof{v} \leq R$. We set $R = \pi$.
The auxiliary results \autoref{lmm:sphere:distance_upper} and \autoref{lmm:integral_metric_sphere_lower_bound} below show that we can choose $C_{\ms{Mup}} = 2\pi$ and $C_{\ms{Mlo}} = \sqrt{2} \pi$, respectively.

\textsc{Kernel} (with $C_{\ms{Kmi}} = C_{\ms{Kma}} = C_{\ms{Ker}}$), and \textsc{VarIneq} are assumed. \textsc{Entropy} was shown above with $C_{\ms{Ent}} = 2 \sqrt{k+1}$.

In proper Alexandrov spaces of nonnegative curvature, like (hyper-)spheres, a reverse variance inequality holds, \cite[Theorem 5.2]{ohta12}, 
\begin{equation*}
	\Ex{d(Y_t, q)^2-d(Y_t, m_t)^2} \leq d(q, m_t)^2
	\eqfs
\end{equation*}
This and the smoothness condition stated in the corollary imply \textsc{HölderSmoothEx}.
\subsubsection{Auxiliary Results}
\begin{lemma}\label{lmm:sphere:distance_upper}
	Let $(p,u), (q,v) \in \ms T\mb S^k$. Then
	\begin{equation*}
		d(\Exp(q, v), \Exp(p, u)) \leq \frac\pi2 \abs{q-p} + 2\pi \abs{v-u}
		\eqfs
	\end{equation*}
\end{lemma}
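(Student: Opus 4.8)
The plan is to bound the intrinsic distance on $\mb S^k$ by a constant multiple of the ambient Euclidean distance in $\R^{k+1}$, and then to expand $\Exp(q,v)-\Exp(p,u)$ termwise. First I would record the chord--arc inequality $d(a,b)\le\frac\pi2\abs{a-b}$ for $a,b\in\mb S^k$, where $\abs{\cdot}$ is the Euclidean norm: writing $\theta=d(a,b)=\arccos(a\tr b)\in[0,\pi]$ we have $\abs{a-b}=2\sin(\theta/2)$, and Jordan's inequality $\sin(\theta/2)\ge\theta/\pi$ on $[0,\pi/2]$ (concavity of $\sin$) rearranges to $\theta\le\frac\pi2\abs{a-b}$. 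Hence it suffices to prove the Euclidean bound $\abs{\Exp(q,v)-\Exp(p,u)}\le\abs{q-p}+4\abs{v-u}$.

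For the latter, using $\Exp(q,v)=\cos\abs v\,q+\sin\abs v\,\frac v{\abs v}$ (with the convention $\Exp(q,0)=q$) I would split
\[
\Exp(q,v)-\Exp(p,u)=\cos\abs v\,(q-p)+(\cos\abs v-\cos\abs u)\,p+\sin\abs v\,\br{\tfrac v{\abs v}-\tfrac u{\abs u}}+(\sin\abs v-\sin\abs u)\,\tfrac u{\abs u}
\]
and bound the four summands. Since $\abs p=1$ and $\abs{\cos\abs v}\le1$, the first is $\le\abs{q-p}$. Since $\sin$ and $\cos$ are $1$-Lipschitz and $\bigl|\,\abs v-\abs u\,\bigr|\le\abs{v-u}$, the second and fourth are each $\le\abs{v-u}$. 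For the third I would use $\abs{\sin\abs v}\le\abs v$ together with the normalization estimate $\bigl|\tfrac v{\abs v}-\tfrac u{\abs u}\bigr|\le\tfrac{2\abs{v-u}}{\abs v}$, which follows from the identity $v\abs u-u\abs v=\abs u\,(v-u)+u\,(\abs u-\abs v)$; their product is $\le2\abs{v-u}$. Adding up gives $\abs{\Exp(q,v)-\Exp(p,u)}\le\abs{q-p}+4\abs{v-u}$, and the first step then yields $d(\Exp(q,v),\Exp(p,u))\le\frac\pi2\abs{q-p}+2\pi\abs{v-u}$.

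The only subtle point is the third summand: the factor $\bigl|\tfrac v{\abs v}-\tfrac u{\abs u}\bigr|$ is unbounded as $\abs v\to0$, so one must not estimate $\abs{\sin\abs v}$ by $1$ there but instead pair it with $\abs{\sin\abs v}\le\abs v$, which exactly absorbs the singularity; the degenerate cases $v=0$ or $u=0$ are handled by the stated convention, for which the offending term vanishes. Everything else is a routine application of the triangle inequality and of Lipschitz bounds for $\sin$ and $\cos$.
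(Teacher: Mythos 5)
Your proof is correct. Both you and the paper begin by bounding the intrinsic distance through the chord--arc inequality $d\le\frac\pi2\abs{\cdot}$ and then expand $\Exp(q,v)-\Exp(p,u)$ into cosine and sine parts; the cosine part is handled identically (splitting $\cos\abs v\,q-\cos\abs u\,p=\cos\abs v(q-p)+(\cos\abs v-\cos\abs u)p$ and using $1$-Lipschitzness of $\cos$). The sine part is where you diverge: the paper treats $\tfrac{\sin\abs v}{\abs v}v-\tfrac{\sin\abs u}{\abs u}u$ in one piece, differentiating $x\mapsto\tfrac{\sin\abs x}{\abs x}x$, writing its Jacobian $J(x)=\br{\cos\abs x-\tfrac{\sin\abs x}{\abs x}}\abs x^{-2}xx\tr+\tfrac{\sin\abs x}{\abs x}I_k$, and bounding $\normof{J}_{\ms{op}}\le 3$ to get a Lipschitz constant for the whole map. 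You instead split once more, into $\sin\abs v\br{\tfrac v{\abs v}-\tfrac u{\abs u}}+(\sin\abs v-\sin\abs u)\tfrac u{\abs u}$, and control the first summand by pairing $\abs{\sin\abs v}\le\abs v$ with the algebraic identity $v\abs u-u\abs v=\abs u(v-u)+u(\abs u-\abs v)$, which gives $\bigl|\tfrac v{\abs v}-\tfrac u{\abs u}\bigr|\le\tfrac{2\abs{v-u}}{\abs v}$ and so a clean $2\abs{v-u}$. Both routes yield the same Euclidean bound $\abs{q-p}+4\abs{v-u}$ and hence the same constants in the lemma. Your version has the advantage of being purely algebraic --- it avoids differentiating a map with a removable singularity at $0$ and estimating operator norms --- and you are careful to note that the apparent singularity of the normalization factor is exactly cancelled by $\abs{\sin\abs v}\le\abs v$, and that the degenerate cases $u=0$ or $v=0$ are covered by the convention $\Exp(q,0)=q$. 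The paper's Jacobi-matrix argument is shorter to state once the derivative is computed, but yours is the more self-contained of the two.
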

\begin{proof}
We can bound the intrinsic metric on the sphere by the extrinsic one,
\begin{align*}
	d(\Exp(q,v), \Exp(p,u))
	&\leq
	\frac\pi2 \abs{\Exp(q,v)- \Exp(p,u)}
	\\&\leq
	\frac\pi2 \br{\abs{\cos(\abs{v})q - \cos(\abs{u})p} +	\abs{\frac{\sin(\abs v)}{\abs v}v - \frac{\sin(\abs u)}{\abs u} u}}
	\eqfs
\end{align*}
For the $\cos$-terms, it holds
\begin{align*}
\abs{\cos(\abs{v})q - \cos(\abs{u})p}
&\leq
\abs{\cos(\abs{v})} \abs{q-p} + \abs{p}\abs{\cos(\abs{v}) - \cos(\abs{u})}
\\&\leq
\abs{q-p} + \abs{\abs{v} - \abs{u}}
\eqfs
\end{align*}
For the $\sin$-terms, let $J(x)$ be the Jacobi matrix of the function $\R^k \to \R^k,\, x \mapsto \frac{\sin(\abs x)}{\abs x} x$.
Then
\begin{align*}
	\abs{\frac{\sin(\abs v)}{\abs v}v - \frac{\sin(\abs u)}{\abs u} u} \leq \sup_{x\in\R^k} \normof{J(x)}_{\ms{op}} \abs{u-v}
	\eqfs
\end{align*}
As
\begin{align*}
J(x) = \br{\cos(\abs{x}) - \frac{\sin(|x|)}{|x|}} \abs{x}^{-2} xx\tr + \frac{\sin(|x|)}{|x|} I_k
\eqcm
\end{align*}
it holds 
\begin{align*}
\normof{J(x)}_{\ms{op}} 
\leq 
\br{ \abs{\cos(\abs{x})} + \abs{\frac{\sin(|x|)}{|x|}} } \normof{\abs{x}^{-2} xx\tr}_{\ms{op}} + \abs{\frac{\sin(|x|)}{|x|}} \normof{I_k}_{\ms{op}} 
\leq 3
\eqfs
\end{align*}
Thus, $d(\Exp(q,v), \Exp(p,u)) \leq \frac\pi2 \br{\abs{q-p} + \abs{\abs{v} - \abs{u}} + 3\abs{u-v}}$.
\end{proof}
\begin{lemma}\label{lmm:integral_metric_sphere_lower_bound}
Let $(p,u), (q,v) \in \ms T\mb S^k$ with $\abs{u}, \abs{v} \leq \pi$. Then
\begin{equation*}
	\int_{-\frac12}^\frac12 d_{\mb S^k}(\Exp(p, xu), \Exp(q, xv))^2 \dl x \geq \frac{1}{\pi} \abs{p-q}^2 +  \frac1{2\pi^2} \abs{v-u}^2
	\eqfs
\end{equation*}
\end{lemma}
\begin{proof}
	First we lower bound the intrinsic distance $d_{\mb S^k}$ by the euclidean one and use the explicit representation of the $\Exp$-function,
	\begin{equation*}
		d_{\mb S^k}(\Exp(p, xu), \Exp(q, xv))^2 \geq  \abs{\cos(x \abs{u})p + \sin(x\abs{u}) \frac{u}{\abs{u}} - \cos(x \abs{v})q - \sin(x\abs{v}) \frac{v}{\abs{v}}}^2
		\eqfs
	\end{equation*}
	When integrating after calculating the squared norm, all summands with a $\cos()\sin()$-factor disappear, because of symmetry. Thus, we obtain
	\begin{align*}
		&	\int_{-\frac12}^{\frac12} d_{\mb S^k}(\Exp(p, xu), \Exp(q, xv))^2 \dl x 
		\\&\geq
		\int_{-\frac12}^{\frac12} \cos(x\abs{u})^2 p\tr p - 2 \cos(x\abs{u})\cos(x\abs{v})p\tr q + \cos(x\abs{v})^2 q\tr 	q  \,\dl x
		\\&\quad+ \int_{-\frac12}^{\frac12} \sin(x\abs{u})^2 \frac{u\tr u}{\abs{u}^2} - 2\sin(x\abs{u})\sin(x\abs{v}) 	\frac{u\tr v}{\abs{u}\abs{u}} + \sin(x\abs{v})^2 \frac{v\tr v}{\abs{v}^2}\,\dl x 
		\eqfs
	\end{align*}
	As $\abs{p}=\abs{q}=1$, $\cos(x)^2+\sin(x)^2=1$, $2\cos(\alpha)\cos(\beta) = \cos(\alpha - \beta) + \cos(\alpha + \beta)$, and $2\sin(\alpha)\sin(\beta) = \cos(\alpha - \beta) - \cos(\alpha + \beta)$, the right hand side reduces to
	\begin{equation*}
		\int_{-\frac12}^{\frac12} 2 - \br{\cos(xa)+\cos(xb)} p\tr q - \br{\cos(xa)-\cos(xb)} z\, \dl x \eqcm
	\end{equation*}
	where we set $a = \abs{u}-\abs{v}$, $b = \abs{u}+\abs{v}$, and $z = \frac{u\tr v}{\abs{u}\abs{v}}$.
	Integrating yields
	\begin{equation*}
		2 - 2 \br{\frac{\sin(\frac12 a)}{a} + \frac{\sin(\frac12 b)}{b}} q\tr p - 2 \br{\frac{\sin(\frac12a)}{a} - \frac{\sin(\frac12b)}{b}} z
		\eqfs
	\end{equation*}
	As $q\tr p = 1 - \frac12\abs{q-p}^2$, we can split the sum into two parts $A+B$, where
	\begin{align*}
		A &:= \br{\frac{\sin(\frac12 a)}{a} + \frac{\sin(\frac12 b)}{b}}\abs{q-p}^2\eqcm\\
		B &:= 2 - 2 \br{\frac{\sin(\frac12 a)}{a} + \frac{\sin(\frac12 b)}{b}} - 2 \br{\frac{\sin(\frac12a)}{a} - \frac{\sin(\frac12b)}{b}} z\eqfs
	\end{align*}
	The function $x\mapsto \sin(x)/x$ decreases on the interval $(0, \pi)$. Thus, 
	\begin{equation*}
		\frac{\sin(\frac12 a)}{a} + \frac{\sin(\frac12 b)}{b} 
		\geq
		\frac{\sin(\frac12 \pi)}{\pi} + \frac{\sin(\pi)}{2\pi}  
		= 
		\frac1\pi
	\end{equation*}
	as $\abs{v},\abs{u} \leq \pi$. In particular, $A \geq \frac1\pi \abs{q-p}^2$.
	To bound $B$, we will show $f(a,b,z) \geq 0$ for all $a \in [-\pi, \pi]$, $b \in [0, 2\pi]$, and $z\in[-1,1]$, where
	\begin{align*}
		&f(a,b,z) := \\&\ 2 - 2 \br{\frac{\sin(a/2)}{a} + \frac{\sin(b/2)}{b}}- 2 \br{\frac{\sin(a/2)}{a} - \frac{\sin(b/2)}{b}} z - \frac12 c \br{a^2+b^2 + (a^2-b^2)z} 
	\end{align*}
	with $c > 0$. This suffices as $a^2+b^2 + (a^2-b^2)z = 2\abs{v-u}^2$.
	As $f$ is linear in $z$, it is minimized either at $z=1$ or at $z=-1$. It holds
	\begin{align*}
		f(a,b,1) = 2 - \frac{4\sin(\frac12a)}{a} - c a^2\eqcm
		&& 
		f(a,b,-1) = 2 - \frac{4\sin(\frac12 b)}{b} - c b^2 \eqfs
	\end{align*}
	Thus, $f(a,b,z) \geq 0$ is true if and only if
	\begin{equation*}
		c \leq \inf_{x\in[-\pi, 2\pi]} \frac{2 - \frac{4\sin(x/2)}{x}}{x^2} = \frac1{2\pi^2}\eqfs
	\end{equation*}
	By setting $c = \frac1{2\pi^2}$, we obtain
	\begin{equation*}
		B \geq \frac1{2\pi^2} \abs{v-u}^2
		\eqfs
	\end{equation*}
\end{proof}
\section{Chaining}\label{sec:chaining}
\begin{theorem}[Empirical process bound]\label{thm:empproc}
	Let $(\mc Q, d)$ be a separable pseudo-metric space and $\mc B \subset \mc Q$. Let $Z_1, \dots, Z_n$ be centered, independent, and integrable stochastic processes indexed by $\mc Q$ with a $q_0 \in \mc B$ such that $Z_i(q_0) = 0$ for $i=1,\dots, n$.
	Let $(Z_1\pr, \dots, Z_n\pr)$ be an independent copy of $(Z_1, \dots, Z_n)$.
	Assume the following Lipschitz-property: There is a random vector $A$ with values in $\R^n$ such that
	\begin{equation*}
		\abs{Z_i(q)-Z_i(p)-Z_i\pr(q)+ Z_i\pr(p)} \leq A_i d(q,p)
	\end{equation*} 
	for $i = 1,\dots, n$ and all $q,p\in\mc B$.
	Let $\kappa \geq 1$.
	Then
	\begin{equation*}
		\Ex*{\sup_{q\in\mc B} \abs{\sum_{i=1}^nZ_i(q)}^\kappa} \leq c_{\kappa}\, \Ex*{\normof{A}_2^\kappa} \, 
		\gamma_2(\mc B, d)^\kappa
		\eqcm
	\end{equation*}
	where $c_{\kappa} \in (0,\infty)$ depends only on $\kappa$.
\end{theorem}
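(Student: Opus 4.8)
The plan is the standard two-step argument for generic chaining: symmetrize the sum $\sum_i Z_i$ into a Rademacher-weighted sum, then invoke Talagrand's generic chaining bound conditionally. Separability of $(\mc Q,d)$ lets us replace every supremum over $\mc B$ by a supremum over a fixed countable dense subset, so measurability issues disappear. Set $S(q)=\sum_{i=1}^n Z_i(q)$ and $W_i(q)=Z_i(q)-Z_i'(q)$, so that $W_i(q_0)=0$ and, by hypothesis, $|W_i(q)-W_i(p)|\le A_i\,d(q,p)$.

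\textbf{Step 1 (symmetrization).} Since each $Z_i'$ is centered and independent of $(Z_j)_j$, we have $\E[\sum_i W_i(q)\mid(Z_j)_j]=S(q)$, hence $|S(q)|=\bigl|\E[\sum_i W_i(q)\mid(Z_j)_j]\bigr|\le \E[\sup_{p\in\mc B}|\sum_iW_i(p)|\mid(Z_j)_j]$ pointwise. Taking the supremum over $q\in\mc B$, raising to the power $\kappa\ge1$, applying Jensen's inequality for conditional expectations, and taking expectations gives
\[
\E\!\left[\sup_{q\in\mc B}|S(q)|^\kappa\right]\le \E\!\left[\sup_{q\in\mc B}\Bigl|\sum_{i=1}^nW_i(q)\Bigr|^\kappa\right].
\]
\textbf{Step 2 (Rademacher signs).} Let $\varepsilon_1,\dots,\varepsilon_n$ be i.i.d. Rademacher variables, independent of the whole original space. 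Each $W_i$ is a symmetric process (as $(Z_i,Z_i')$ is exchangeable, $W_i\overset{d}{=}-W_i$) and the $W_i$ are independent, so for every deterministic sign vector $\sigma$ one has $(\sigma_iW_i)_i\overset{d}{=}(W_i)_i$ as vectors of processes; conditioning on $\varepsilon$ yields $(\varepsilon_iW_i)_i\overset{d}{=}(W_i)_i$. Hence the right-hand side above equals $\E[\sup_{q\in\mc B}|\sum_i\varepsilon_iW_i(q)|^\kappa]$.

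\textbf{Step 3 (conditional chaining).} Condition on $(Z_i,Z_i')_{i=1}^n$. The process $q\mapsto\sum_i\varepsilon_iW_i(q)$ then vanishes at $q_0$ and its increments are weighted Rademacher sums, so Hoeffding's inequality gives
\[
\Pr\!\left(\Bigl|\sum_i\varepsilon_i\bigl(W_i(q)-W_i(p)\bigr)\Bigr|>u \,\Big|\,(Z_j,Z_j')_j\right)\le 2\exp\!\left(-\frac{u^2}{2\,\widetilde d(q,p)^2}\right),\qquad \widetilde d(q,p)^2:=\sum_{i=1}^n\bigl(W_i(q)-W_i(p)\bigr)^2.
\]
The $\mathrm L^\kappa$ form of the generic chaining bound (the moment version of \cite[Theorem 2.2.27]{talagrand14}, using that the process is anchored at $q_0\in\mc B$) gives, conditionally,
\[
\E\!\left[\sup_{q\in\mc B}\Bigl|\sum_i\varepsilon_iW_i(q)\Bigr|^\kappa \,\Big|\,(Z_j,Z_j')_j\right]\le c_\kappa\,\gamma_2(\mc B,\widetilde d)^\kappa.
\]
Since $\widetilde d(q,p)\le\normof{A}_2\,d(q,p)$ pointwise on the sample space, and $\gamma_2$ is monotone in the pseudo-metric and homogeneous of degree one, we get $\gamma_2(\mc B,\widetilde d)\le\normof{A}_2\,\gamma_2(\mc B,d)$ almost surely. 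Taking expectations over $(Z_i,Z_i')_i$ and combining with Steps 1 and 2 gives $\E[\sup_{q\in\mc B}|S(q)|^\kappa]\le c_\kappa\,\E[\normof{A}_2^\kappa]\,\gamma_2(\mc B,d)^\kappa$ (trivial if $\E[\normof{A}_2^\kappa]=\infty$).

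The only substantial ingredient is the generic chaining estimate used in Step 3; the symmetrization, the sign-invariance of the $W_i$, Hoeffding's bound, and the scaling of $\gamma_2$ are routine. The delicate points are: reducing all suprema to a countable index set so that everything is measurable; taking the randomizing signs independent of the entire original space (so that the conditional Rademacher argument is legitimate even though $A$ need not be a function of the data); and quoting the chaining theorem in exactly the anchored, $\mathrm L^\kappa$ form with a constant depending only on $\kappa$.
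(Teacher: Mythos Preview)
Your argument is correct and follows the standard symmetrization-plus-generic-chaining route. The paper itself does not prove this theorem but simply cites \cite[Theorem~6]{schoetz19}, so there is no in-paper argument to compare against; your outline is the canonical approach and is presumably close to what the cited reference does.
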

\begin{proof}
See \cite[Theorem  6]{schoetz19}.
\end{proof}
\begin{lemma}\label{lmm:chain:euclid}
	In the Euclidean space $\R^{k}$ with the metric induced by the Euclidean norm $\abs{\cdot}$, it holds
	$\gamma_2(\ball(x, r, \abs{\cdot}), \abs{\cdot}) \leq 2 r \sqrt{k}$ for any point $x\in\R^{k}$ and radius $r > 0$.
\end{lemma}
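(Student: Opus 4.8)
The plan is to bound $\gamma_2$ via the entropy-integral estimate recorded in \autoref{rem:lingeo} (item \textsc{Entropy}), namely $\gamma_2(\mc B, \mf b) \le \int_0^\infty \sqrt{\log N(\mc B, \mf b, \epsilon)}\,\dl\epsilon$, combined with the classical volumetric bound on covering numbers of Euclidean balls. Write $B$ for the ball $\ball(x,\abs{\cdot},r)$ in $\R^k$. Since the Euclidean metric is translation invariant, $N(B,\abs{\cdot},\epsilon)$ does not depend on $x$, so one may assume $x = 0$.

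First I would establish the covering number bounds $N(B,\abs{\cdot},\epsilon) \le \br{1 + \frac{2r}{\epsilon}}^k$ for $0 < \epsilon < r$, and $N(B,\abs{\cdot},\epsilon) = 1$ for $\epsilon \ge r$ (as then $B \subset \ball(0,\abs{\cdot},\epsilon)$). The first is the standard packing argument: a maximal $\epsilon$-separated subset $\{y_1,\dots,y_M\} \subset B$ is automatically an $\epsilon$-net of $B$, and the balls $\ball(y_j,\abs{\cdot},\epsilon/2)$ are pairwise disjoint and contained in $\ball(0,\abs{\cdot},r+\epsilon/2)$; comparing Lebesgue volumes gives $M(\epsilon/2)^k \le (r+\epsilon/2)^k$, i.e.\ $M \le \br{1+\frac{2r}{\epsilon}}^k$.

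Substituting these bounds into the entropy integral and using that $\log N$ vanishes for $\epsilon \ge r$,
\[
	\gamma_2(B,\abs{\cdot}) \;\le\; \int_0^r \sqrt{k\log\br{1+\tfrac{2r}{\epsilon}}}\,\dl\epsilon \;=\; r\sqrt{k}\int_0^1 \sqrt{\log\br{1+\tfrac{2}{u}}}\,\dl u ,
\]
after the change of variables $\epsilon = ru$. It then remains to show $\int_0^1 \sqrt{\log(1+2/u)}\,\dl u \le 2$. For $u \in (0,1]$ one has $1+\frac{2}{u} \le \frac{3}{u}$, so the integrand is at most $\sqrt{\log 3 - \log u}$; the substitution $u = e^{-v}$ together with the subadditivity $\sqrt{a+b}\le\sqrt a+\sqrt b$ yields
\[
	\int_0^1 \sqrt{\log 3 - \log u}\,\dl u = \int_0^\infty \sqrt{\log 3 + v}\,e^{-v}\,\dl v \le \sqrt{\log 3} + \int_0^\infty \sqrt{v}\,e^{-v}\,\dl v = \sqrt{\log 3} + \Gamma\br{\tfrac32} = \sqrt{\log 3} + \tfrac{\sqrt\pi}{2} < 2 ,
\]
which gives the claim $\gamma_2(B,\abs{\cdot}) \le 2r\sqrt k$.

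The only slightly delicate point is this final numerical inequality; every other step is routine. An alternative, fully self-contained route would construct an explicit admissible sequence of partitions of $B$ into dyadic subcubes of geometrically decreasing side length (with cardinalities kept below $2^{2^j}$), but extracting precisely the constant $2$ that way is more cumbersome than passing through the entropy integral, so I would favour the argument above.
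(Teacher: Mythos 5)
Your proof is correct and follows essentially the same route the paper indicates: bound $\gamma_2$ by the entropy integral from \autoref{rem:lingeo} and then invoke the standard volumetric covering bound for Euclidean balls (which is what the cited Pollard reference supplies). The paper leaves both ingredients to the references; you have written out the covering-number estimate, the change of variables, and the numerical evaluation $\sqrt{\log 3}+\tfrac{\sqrt\pi}{2}<2$ explicitly, all of which check out.
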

\begin{proof}
See \cite[section 4]{pollard90}.
\end{proof}
\begin{lemma}\label{lmm:chain:sublinear}
	Let $d$ and $d\pr$ be metrics on a set $\mc Q$. 
	\begin{enumerate}[label=(\roman*)]
	\item 
		Assume $d\leq B d\pr$ for a $B > 0$. Then 
		\begin{equation*}
			\gamma_2(\mc Q, d) \leq B \gamma_2(\mc Q, d\pr)
			\eqfs
		\end{equation*}
	\item 
		There is a universal constant $c > 0$ such that 
		\begin{equation*}
			\gamma_2(\mc Q, d+d\pr) \leq c \br{\gamma_2(\mc Q, d+d\pr) + \gamma_2(\mc Q, d+d\pr)}
			\eqfs
		\end{equation*}
	\end{enumerate}
\end{lemma}
\begin{proof}
	See \cite[Exercise 2.2.20 and Exercise 2.2.24]{talagrand14}
\end{proof}
\section{Geometry}\label{sec:geometry}
We introduce some terms from (metric) geometry, which are used in this article. See \cite{burago01} for a in depth introduction.

A metric space is called \textbf{proper} if every closed ball is compact.
Let $(\mc Q, d)$ be a metric space. For a continuous map $\gamma\colon[a,b]\to\mc Q$ define its \textbf{length} as 
\begin{equation*}
	L(\gamma) = \sup\cbOf{\sum_{i=1}^n d(\gamma(x_{i-1}), \gamma(x_{i})) \ \bigg\vert \ a=x_0  < x_1 < \dots < x_n = b, n\in\N}
	\eqfs
\end{equation*}
Define the \textbf{inner metric} of $(\mc Q, d)$ as $d_{\ms i}(q,p) = \inf L(\gamma)$, where the infimum is taken over all continuous maps $\gamma\colon[a,b]\to\mc Q$ with $\gamma(a) = q$ and $\gamma(b) = p$.
A \textbf{length space} is a metric space $(\mc Q, d)$ with $d  = d_{\ms i}$.
Now, let $(\mc Q, d)$ be a length space.
A continuous map $\gamma \colon [a, b] \to \mc Q$ is called \textbf{shortest path} if $L(\gamma) \leq L(\tilde\gamma)$ for all continuous maps $\tilde\gamma \colon [\tilde a, \tilde b] \to \mc Q$ with $\gamma(a) = \tilde\gamma(\tilde a)$ and $\gamma(b) = \tilde\gamma(\tilde b)$.
A continuous map $\gamma \colon [a, b] \to \mc Q$ is \textbf{locally minimizing} if for every $t\in[a,b]$ there is $\epsilon>0$ such that $\gamma_{\vert[t-\epsilon, t+\epsilon]}$ is a shortest path. 
A continuous map $\gamma \colon [a, b] \to \mc Q$ has \textbf{constant speed} if there is $v \geq 0$ such that for every $t\in[a,b]$ there is $\epsilon>0$ such that $L(\gamma_{\vert[t-\epsilon, t+\epsilon]}) = 2v\epsilon$. 
A \textbf{geodesic} is a locally minimizing continuous map with constant speed.
A \textbf{minimizing geodesic} between two points $q,p\in\mc Q$ is a geodesic $\gamma \colon [a, b] \to \mc Q$ with $L(\gamma) = d(\gamma(a), \gamma(b))$ and $\gamma(a) = q$, $\gamma(b) = p$.
A geodesic $\gamma \colon [a, b] \to \mc Q$ is \textbf{extendible} (through both ends) if there is $\epsilon > 0$ and a geodesic $\tilde\gamma \colon [a-\epsilon, b+\epsilon] \to \mc Q$ such that $\tilde\gamma_{\vert[a,b]} = \gamma$. The tuple $(\mc Q, d)$ is a \textbf{geodesic space} if there is a connecting geodesic for every pair of points. A geodesic space $(\mc Q, d)$ is \textbf{geodesically complete}, if it is complete and all geodesics are extendible.

A \textbf{Hadamard space} is a nonempty complete metric space $(\mc Q, d)$ such that for all $q,p \in\mc Q$, there is $m \in\mc Q$ such that $d(y, m)^2 \leq \frac12 d(y,q)^2 + \frac12 d(y,p)^2 - \frac14d(q,p)^2$ for all $y\in\mc Q$. In Hadamard spaces, all geodesics are minimizing. Hilbert spaces and Riemannian manifolds of nonpositive sectional curvature are Hadamard spaces. Hadamard spaces are also called global NPC-spaces, complete $CAT(0)$ spaces or Alexandrov spaces of nonpositive curvature.

An \textbf{Alexandrov spaces of nonnegative curvature} is a geodesic space $(\mc Q, d)$ such that for all $q,p \in\mc Q$, there is $m \in\mc Q$ such that $d(y, m)^2 \geq \frac12 d(y,q)^2 + \frac12 d(y,p)^2 - \frac14d(q,p^2)$ for all $y\in\mc Q$. More generally Alexandrov spaces can be defined with an arbitrary curvature bound. They generalize Riemannian manifolds with a bound on the sectional curvature.
\end{appendix}
%
\def\arxiv#1{%
  \href{http://arxiv.org/abs/#1}{arXiv: #1}%
}
\bibliographystyle{alpha}
\bibliography{literature}
\end{document}